\renewcommand{\subsectionmark}[1]{}
\newcommand*\dif{\mathop{}\!\mathrm{d}}
\newtheorem{theorem}{Theorem}[section]
\newtheorem{lemma}[theorem]{Lemma}
\newtheorem{conjecture}[theorem]{Conjecture}
\newtheorem{question}[theorem]{Question}
\newtheorem{proposition}[theorem]{Proposition}
\newtheorem{corollary}[theorem]{Corollary}
\newtheorem{remark}[theorem]{Remark}
\renewcommand{\P}{\mathbb{P}}
\newcommand{\N}{\mathbb{N}}
\newcommand{\E}{\mathbb{E}}
\newcommand{\etav}{\eta^\text{\normalfont v}}
\newcommand{\etac}{\eta^\text{\normalfont c}}
\newcommand{\Xv}{X^\text{\normalfont v}}
\newcommand{\Xc}{X^\text{\normalfont c}}
\newcommand{\GW}{\textup{\textbf{GW}}}
\newcommand{\AGW}{\textup{\textbf{AGW}}}
\newcommand{\UGW}{\textup{\textbf{UGW}}}
\newcommand{\EX}{\textup{EX}}
\newcommand{\jointree}{\bullet\hspace*{-3.3pt}\text{-\hspace*{1.5pt}} }
\newcommand\abs[1]{\left| #1\right|}
\newcommand{\Qu}{\mathbb{Q}^{\text{\normalfont v}}_{\rho}}
\newcommand{\Qa}{\mathbb{Q}^{\text{\normalfont c}}_{\alpha}}
\newcommand{\Pu}{\mathbb{P}^{\text{\normalfont v}}_{\rho}}
\newcommand{\Pa}{\mathbb{P}^{\text{\normalfont c}}_{\alpha}}
\newcommand{\QuT}{\mathbb{Q}^{\text{\normalfont v}}_{\rho,T}}
\newcommand{\PuT}{\mathbb{P}^{\text{\normalfont v}}_{\rho,T}}
\title{The speed of the tagged particle in the exclusion process on Galton-Watson trees}
\author{
Nina Gantert$^\ast$ and Dominik Schmid$^\ast$}
\date{\today}
\begin{document}


\maketitle

\abstract{ We study two different versions of the simple exclusion process on augmented Galton-Watson trees, the constant speed model and the varying speed model. In both cases, the simple exclusion process starts from an equilibrium distribution with non-vanishing particle density. Moreover, we assume to have initially a particle in the root, the tagged particle. We show for both models that the tagged particle has a positive linear speed and we give explicit formulas for the speeds.}

\phantom{.} \hspace{0.2cm} \textbf{Keywords:} Exclusion process, Galton-Watson trees, tagged particle, ergodicity \\
\phantom{.} \hspace{0.5cm} \textbf{AMS 2000 subject classification:} 60K35, 60K37

\let\thefootnote\relax
\footnotetext{ $^\ast$ \textit{TU München, Germany. E-Mail}: \nolinkurl{nina.gantert@tum.de}, \nolinkurl{dominik.schmid@tum.de}}

\section{Introduction}  

The simple exclusion process is a classical example of an interacting particle system which was intensively studied over the last decades, see \cite{KLO:Fluctuations,L:Book2,L:interacting-particle} for an overview.  It serves as one of the standard models to describe random movements of particles in a gas. Intuitively, it can be described as follows: For a given graph, we place indistinguishable particles on the sites of the graph such that each vertex is occupied by at most one particle. The particles then independently perform simple random walks under an exclusion rule. This means that whenever a particle would move to an occupied site, this move is suppressed. 
In this article, the underlying graph will be chosen randomly, as a supercritical, augmented Galton--Watson tree without leaves. We will consider two models, the \textbf{variable speed model} where a particle 
attempts to cross all adjacent edges with rate $1$, respectively, and the \textbf{constant speed model} where 
particles wait exponential times with expectation $1$ before choosing an adjacent edge (see below for precise definitions).
After the choice of the tree, we consider a stationary starting distribution where we condition on initially having a particle, the \textbf{tagged particle}, in the root. The variable speed model and the constant speed model have different invariant distributions.
We study the evolution of the tagged particle over time.  \\
Our motivation is two-fold. On the one hand, the tagged particle in exclusion processes has been studied for 
several graphs or several transition probabilities on the lattice, see Section \ref{sec:related}. On the $d$-dimensional lattice, if
the transition probabilities are given by
a random walk with drift, the Bernoulli product measures with parameter $\rho$ are invariant distributions and it was proved in \cite{K:CLTforSEP} and \cite{S:TaggedParticleZd}
that the speed of the tagged particle is $1-\rho$ times the speed of a single particle.
This formula for the speed is what one would naively expect: one could argue that since the density of empty sites is $1-\rho$, only a proportion of $1-\rho$ of the steps can be carried out.
In the paper \cite{CCGS:SpeedTree}, the result of \cite{S:TaggedParticleZd} 
was generalized for regular trees.
Indeed, for exclusion processes on regular trees, the Bernoulli product measures with parameter $\rho$ are again invariant distributions and the speed of the tagged particle is again $1-\rho$ times the speed of a single particle.
Exclusion processes in random environments are less studied. We consider Galton--Watson trees as an example for a random environment, where invariant measures of the exclusion process are known.
It turns out that for the variable speed model, Bernoulli product measures with parameter $\rho$ are again invariant distributions and the speed of the tagged particle is again $1-\rho$ times the speed of a single particle. However, for the constant speed model, we have invariant measures which are still product measures but not with identical marginals, and the formula for the speed can not be guessed as easily. 
On the other hand, random walks on Galton--Watson trees have been investigated intensively, we refer to the seminal paper \cite{LPP:ErgodicGalton}. We believe that it is interesting to consider tagged particles in exclusion processes as natural generalizations of random walks. One difficulty is that the position of the tagged particle is not a Markov chain, and therefore the proof of its transience is not straightforward. \\
Our formulas for the speed of the tagged particle rely on explicitly given invariant measures for the environment, seen from the tagged particle, and we show that these invariant measures are ergodic for the environment process.
The description of these invariant measures and the proof of their ergodicity are the main novelty in our paper.
There are two sets of techniques, one coming from random walks on Galton--Watson trees and 
the other one from exclusion processes. In contrast to the case of regular trees, our proofs require to intertwine the techniques from both sets.

\subsection{Outline of the paper}

This paper is organized as follows. 
In Section \ref{sec:model}, we define our model and quote some results of \cite{L:interacting-particle} on invariant measures for the variable speed exclusion process and the constant speed exclusion process on trees, respectively.
We give our main result and some remarks in Section \ref{sec:main}.
Section \ref{sec:related} contains a description of some related work.
In Section \ref{sec:SpacesofTrees}, we define a common probability space for locally finite, rooted trees and the respective exclusion processes on them. This will allow us to study the environment process in Section \ref{sec:environment}, which can be interpreted as the exclusion process ``seen from the tagged particle''.  We provide stationary measures for the environment process in both models of the simple exclusion process. The arguments in this section are based on the ideas of Lyons et al.\@ for studying random walks on Galton--Watson trees. Since the motion of the tagged particle is itself not a Markov process, a key step is to show that the tagged particle is transient. This is accomplished in Section \ref{sec:transience} by combining the results of Section \ref{sec:environment} on Galton--Watson trees and martingale techniques of Liggett \cite[Section III.4]{L:Book2} for the motion of a tagged particle in exclusion processes. A similar approach was used in \cite{CCGS:SpeedTree} to show transience of the tagged particle for regular trees. In Section \ref{sec:ergodicity}, we show ergodicity for the environment process. In contrast to the case of regular trees, this is done by intertwining different techniques coming from random walks on Galton--Watson trees and interacting particle systems, i.e.\@ we combine coupling arguments of Saada in \cite{S:TaggedParticleZd} for the exclusion process on $\mathbb{Z}^d$ with drift, with regeneration time arguments of Lyons and Peres in \cite[Chapter 17]{LP:ProbOnTrees} for the random walk on Galton--Watson trees. 
From the ergodicity of the environment process,  we deduce a law of large numbers for the position of the tagged particle in Section \ref{sec:speed}. We conclude with an outlook on related open problems.

\subsection{The model}\label{sec:model}

Let $G=(V,E)$ be a locally finite graph. Let $p: V \times V \rightarrow [0,\infty)$ be a function which satisfies $p(v,v)=0$ for all $v \in V$. The \textbf{exclusion process} with transition rates $p(\cdot,\cdot)$ will be defined as the Feller process $(\eta_t)_{t\geq 0}$ with state space $\{ 0,1\}^{V}$ generated by the closure of
\begin{equation}\label{def:generatorExclusionProcess}
\mathcal{L}f(\eta) = \sum_{x,y \in V} p(x,y) \eta(x)(1-\eta(y))\left[ f(\eta^{x,y})-f(\eta) \right] 
\end{equation} for all cylinder functions $f$. We denote by $\eta^{x,y} \in \{ 0,1\}^{V}$ the configuration where we exchange the values at positions $x$ and $y$ in $\eta\in \{ 0,1\}^{V}$. For a given configuration $\eta$, a site $x$ is \textbf{occupied} by a particle if $\eta(x)=1$ and \textbf{vacant} otherwise. Moreover, we write $\deg(x)$ for the degree of $x$. 
We consider two different ways of defining the \textbf{simple exclusion process} on $G$. For $p(x,y) = \mathds{1}_{\{ x,y \} \in E }$, we refer to the resulting process $(\etav_t)_{t \geq 0}$ as the \textbf{variable speed model}, for $p(x,y) = \deg(x)^{-1}\mathds{1}_{\{ x,y \} \in E }$, we call $(\etac_t)_{t \geq 0}$ the \textbf{constant speed model} of the simple exclusion process. 
The terms ``variable speed model'' and  ``constant speed model'' go back to \cite{BD:InvarianceUnbounded} who consider random walks among random conductances.
In words, each particle in the variable speed model at a site $x$ has an exponential waiting time with parameter $\deg(x)$ independently of all other particles. When the time is up, it jumps to one of its neighbors uniformly at random under an exclusion rule. In the constant speed model, the particles wait according to i.i.d.\ $\exp(1)$-random variables. They then choose one neighbor uniformly at random and jump to the selected site if it is vacant.
Note that the two models of the simple exclusion process agree for regular graphs up to a deterministic time change.  \\ 

In the following, the underlying graph $G$ will be given as an \textbf{augmented Galton--Watson tree} $(T,o)$ with vertex set $V(T)$, edge set $E(T)$ and a distinguished vertex $o \in V(T)$ called the \textbf{root}. More precisely, let $(p_k)$ for $k \in \N_0=\N \cup \{0\}$ be a sequence of non-negative numbers with $\sum_{k=0}^\infty p_k =1$, which defines the \textbf{offspring distribution} $\mu$ of the tree by $\mu(k)=p_k$ for all $k \in \N_0$. We construct $(T,o)$ in such a way that each site has precisely $k+1$ neighbors with probability $p_k$ for all $k\in \N_0$ independently of all other sites. To do so, define a starting vertex $o$ and recursively, starting from $o$, let every site have a number of descendants drawn independently according to $\mu$. The resulting tree is called \textbf{Galton--Watson tree}. Since in this construction, the root has on average one neighbor less than all other sites, we add one additional descendant to $o$ and apply the same recursion in order to obtain an augmented Galton--Watson tree, see Section \ref{sec:SpacesofTrees} for a precise definition. In this article, we assume that the underlying Galton--Watson branching process is supercritical and without leaves, i.e.\@ we have that 
\begin{equation}\label{eq:ExpectationFinite}
p_0=0  \qquad \text{ and } \qquad \mathfrak{m}:= \sum_{k \geq 1}k p_k\in (1,\infty)
\end{equation} holds. In particular, the corresponding augmented Galton--Watson tree is almost surely locally finite since every vertex has only a finite number of descendants. 
The following result states the existence of the exclusion process on Galton--Watson trees which is well-known in the interacting particle systems community. However we could not find an appropriate reference and therefore include a proof in the appendix.
\begin{proposition} \label{pro:existence}
For almost every Galton--Watson tree $(T,o)$, the simple exclusion processes according to the variable speed model and the constant speed model on $(T,o)$ are well-defined Feller processes. More precisely, they are Feller processes with generators given by \eqref{def:generatorExclusionProcess}.
\end{proposition}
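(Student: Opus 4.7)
My plan is to fix a realization of the augmented Galton--Watson tree $(T,o)$ in the almost-sure event that it is locally finite --- which holds by \eqref{eq:ExpectationFinite}, since $\mathfrak{m}<\infty$ forces each vertex to have only finitely many children --- and to build the two processes separately on this fixed graph. The state space $\{0,1\}^{V(T)}$ is compact in the product topology, so to obtain a Feller process it suffices to construct a strongly continuous semigroup of positivity-preserving contractions on $C(\{0,1\}^{V(T)})$ whose generator, restricted to cylinder functions, agrees with $\mathcal{L}$ of \eqref{def:generatorExclusionProcess}.

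For the constant speed model the rates satisfy $\sum_{y}p(x,y)=1$ uniformly in $x$, and a single coordinate flip perturbs any individual rate by at most $1$. Liggett's general existence theorem for interacting particle systems under bounded total rates, \cite[Thm.~I.3.9]{L:interacting-particle}, therefore applies directly on the fixed graph $(T,o)$: it yields the desired Feller semigroup and identifies its generator with $\mathcal{L}$ on the cylinder-function core.

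The variable speed model is the real work, since $\sum_{y}p(x,y)=\deg(x)$ is not uniformly bounded once the offspring distribution has unbounded support, so Liggett's hypothesis fails. I would construct the process via the Harris graphical representation: attach to every edge $e\in E(T)$ an independent rate-$1$ Poisson process $N_e$, and at each atom of $N_e$ exchange the two values at the endpoints of $e$. Restricted to any ball $B_n:=B(o,n)$, the dynamics $(\eta^{(n)}_t)$ that uses only edges inside $B_n$ is an honest continuous-time Markov chain because $|E(B_n)|<\infty$, so it will suffice to show that for every cylinder function $f$ and every $t>0$ the random variable $f(\eta^{(n)}_t)$ is eventually constant in $n$ almost surely. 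The main obstacle, and where I would concentrate the effort, is bounding the ``backward influence set'' $\mathcal{I}_t(A)$ of $A:=\mathrm{supp}(f)$, that is, the set of vertices whose initial occupancies can be propagated to $A$ by a chain of clock rings in $[0,t]$. I would dominate $\mathcal{I}_t(A)$ by the range of a first-passage percolation with $\mathrm{Exp}(1)$ edge weights on $(T,o)$ started from $A$; since clocks on disjoint edges are independent and the tree is supercritical with $\mathfrak{m}<\infty$, standard arguments for exponential FPP on Galton--Watson trees show that the infection spreads only at linear speed, so $\mathcal{I}_t(A)\subset B(A,R_t)$ for an almost-surely finite random radius $R_t$, and local finiteness of $(T,o)$ then yields $|\mathcal{I}_t(A)|<\infty$ almost surely. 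Once this is in hand, the limit $\eta_t:=\lim_{n\to\infty}\eta^{(n)}_t$ is well defined and cadlag, $(S(t)f)(\eta)=\E[f(\eta_t)\mid \eta_0=\eta]$ is continuous in $\eta$ because $f(\eta_t)$ depends only on coordinates inside the finite set $\mathcal{I}_t(A)$, and Dynkin's formula applied to cylinder functions identifies the generator with $\mathcal{L}$ from \eqref{def:generatorExclusionProcess}.
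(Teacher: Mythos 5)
Your treatment of the variable speed model is essentially the paper's argument: the paper also builds the process from the graphical representation and controls the spread of information by a percolation estimate on the Galton--Watson tree (it shows that for a short time $\tau$ the edges that ring form a subcritical bond percolation, hence finite clusters, and iterates; your first-passage-percolation/first-birth bound is a continuous-time variant of the same idea and is correct, since $\mathfrak{m}<\infty$ gives a positive linear lower bound on the first-birth times). The continuity in the initial configuration also needs the small additional remark that the random influence set is contained in a deterministic ball with probability close to one, which is exactly the paper's estimate \eqref{eq:PercolationAnnulusFeller}; your sketch is compatible with that.

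The genuine gap is in the constant speed model. Liggett's Theorem I.3.9 does not apply directly there: its hypotheses bound the total rate at which each \emph{coordinate} is involved in transitions, i.e.\ for the exclusion process a quantity of the form $\sup_x \sum_{y\sim x}\bigl(p(x,y)+p(y,x)\bigr)$. For the constant speed model this equals $1+\sum_{y\sim x}\deg(y)^{-1}$, and the incoming part $\sum_{y\sim x}\deg(y)^{-1}$ is of order $\deg(x)\,\E[1/(Z+1)]$, which is almost surely unbounded over $x$ as soon as the offspring distribution has unbounded support. So the normalization $\sum_y p(x,y)=1$ controls jumps \emph{out of} $x$ but not jumps \emph{into} $x$, and the theorem's condition fails; this is precisely why the paper does not invoke Liggett's theorem for either model and instead proves a separate existence result (Proposition \ref{pro:PercolationExistence}) requiring only that the \emph{per-edge} rates be uniformly bounded and that $p_G>0$. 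The fix is immediate and costs nothing: the constant speed rates satisfy $p(x,y)\le 1$ per edge, so your graphical-representation argument for the variable speed model applies verbatim to the constant speed model as well, which is exactly how the paper handles both models with a single proposition.
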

For a given realization $(T,o)$ of an augmented Galton--Watson tree, we describe a parametrized set of invariant measures with respect to both models of the simple exclusion process. For $\rho \in [0,1]$, let $\pi_{\rho,T}$ be the Bernoulli-$\rho$-product measure on $\{0, 1\}^{V(T)}$, i.e.\@ 
\begin{equation}\label{invmeasvar}
\pi_{\rho,T}\left( \eta \colon \eta(x)= 1 \right) = \rho
\end{equation}
for all $x \in V(T)$. For $\alpha \in [0,\infty)$, let $\nu_{\alpha,T}$ denote the product measure on $\{ 0, 1 \}^{V(T)}$ with marginals
\begin{equation}\label{invmeasconst}
\nu_{\alpha,T}\left( \eta \colon \eta(x)= 1 \right) = \frac{\alpha \deg(x)}{1+\alpha\deg(x)} 
\end{equation} 
for all $x \in V(T)$. The measures $\pi_{\rho,T}$ are invariant for the simple exclusion process $(\etav_t)_{t \geq 0}$ in the variable speed model whenever $\rho \in [0,1]$, see \cite[Chapter VIII, Theorem 2.1]{L:interacting-particle}. Similarly, the measures $\nu_{\alpha,T}$ are invariant for the simple exclusion process $(\etac_t)_{t \geq 0}$ in the constant speed model for all $\alpha \in [0,\infty)$. When we condition to initially have a particle in the root, we call the resulting measures the \textbf{Palm measures} $\pi_{\rho,T}^{\ast}$ and $\nu_{\alpha,T}^{\ast}$ on $\{ 0,1 \}^{V(T)}$ given by
\begin{align*}
\pi_{\rho,T}^{\ast}( \ . \ ) &:= \pi_{\rho,T}\left( \ . \ \mid   \eta(o)=1 \right)  \\ \nu_{\alpha,T}^{\ast}( \ . \ ) &:= \nu_{\alpha,T}\left( \ . \ \mid   \eta(o)=1 \right)
\end{align*}
for all $\rho \in (0,1)$ and $\alpha \in (0, \infty)$. For $\rho=0$ and $\alpha=0$, the simple exclusion process started from $\pi_{0,T}^{\ast}$, respectively $\nu_{0,T}^{\ast}$, is the simple random walk on $T$ in the respective model starting in the root $o$. When we choose $\pi_{\rho,T}^{\ast}$, respectively $\nu_{\alpha,T}^{\ast}$, as an initial distribution of the simple exclusion process, the particle initially placed in the root is called the \textbf{tagged particle}. We denote by $(\Xv_t)_{t\geq 0}$ the position of the tagged particle in $(T,o)$ in the variable speed model  and by $(\Xc_t)_{t\geq 0}$ its position in $(T,o)$ in the constant speed model of the simple exclusion process.

\subsection{Main result}\label{sec:main}

Our main result is to establish a law of large numbers for the tagged particle in the simple exclusion process when starting from a Palm measure on an augmented Galton--Watson tree. For a rooted tree $(T,o)$ and $x \in V(T)$, we write $|x|$ for the shortest path distance from the root. 
\begin{theorem}\label{thm:LLN} Let $Z$ be distributed according to the offspring distribution $\mu$. Then for almost every augmented Galton--Watson tree $(T,o)$, the following holds:
\begin{itemize}
\item[(i)] Let $(\etav_t)_{t \geq 0}$ on $(T,o)$ have initial distribution $\pi_{\rho,T}^{\ast}$ for some $\rho \in [0,1)$. Then $(\Xv_t)_{t \geq 0}$ satisfies
\begin{equation}\label{eq:LLNvariableSpeed}
\lim_{t \rightarrow \infty} \frac{\abs{\Xv_t}}{t} = (1-\rho) \E\left[ \frac{Z-1}{Z+1}\right] \left(\E\left[ \frac{1}{Z+1}\right] \right)^{-1} 
\end{equation} almost surely.
\item[(ii)] Let $(\etac_t)_{t \geq 0}$ on $(T,o)$ have initial distribution $\nu_{\alpha,T}^{\ast}$ for some $\alpha \in [0,\infty)$. Then $(\Xc_t)_{t \geq 0}$ satisfies
\begin{equation}\label{eq:LLNConstantSpeed}
\lim_{t \rightarrow \infty} \frac{\abs{\Xc_t}}{t} =  \E\left[ \frac{Z-1}{Z+1}\frac{1}{\alpha(Z+1)+1}\right] 
\end{equation} almost surely.
\end{itemize} In particular, the tagged particle has almost surely a strictly positive speed.
\end{theorem}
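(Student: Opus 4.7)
The plan is to derive the law of large numbers from the pointwise ergodic theorem applied to the environment process seen from the tagged particle, together with a martingale decomposition of $\abs{X_t}$. Let $(\xi_t)_{t \geq 0}$ denote this environment process: at each time $t$ we record the tree $T$ re-rooted at $X_t$, together with a distinguished edge indicating the direction toward the original root $o$ and the particle configuration $\eta_t$. By Section \ref{sec:environment}, $(\xi_t)_{t \geq 0}$ is Markovian and admits explicit stationary measures $\Qu$ and $\Qa$ in the variable and constant speed models respectively, and Section \ref{sec:ergodicity} establishes their ergodicity. Informally, under $\Qu$ the root's offspring count $Z$ is biased by $1/(Z+1)$ relative to $\mu$ (since the tagged particle spends time inversely proportional to its local jump rate), with configuration $\pi_\rho$ elsewhere; a slightly different biasing governs $\Qa$.

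Since $\abs{X_t}$ changes by $\pm 1$ at every jump of the tagged particle, Dynkin's formula yields
\begin{equation*}
\abs{X_t} = M_t + \int_0^t D(\xi_s)\,\dif s,
\end{equation*}
where $M_t$ is a square-integrable martingale with unit jumps and $D(\xi)$ is the instantaneous expected drift of $\abs{X}$ given $\xi$. If the root of $\xi$ has $Z$ children $c_1,\dots,c_Z$ and parent $p$, then
\begin{equation*}
D^{\text{v}}(\xi) = \sum_{i=1}^{Z}(1 - \xi(c_i)) - (1 - \xi(p)), \qquad D^{\text{c}}(\xi) = \frac{1}{Z+1}\Bigl[\sum_{i=1}^{Z}(1 - \xi(c_i)) - (1 - \xi(p))\Bigr].
\end{equation*}
The jump sizes of $M$ are bounded by $1$ and the total jump rate at $\xi$ is dominated by $Z+1$, which is integrable under $\Qu$ and $\Qa$; hence $\langle M\rangle_t = O(t)$ and $M_t/t \to 0$ almost surely by the martingale law of large numbers.

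The ergodic theorem then gives $t^{-1}\int_0^t D(\xi_s)\,\dif s \to \E_{\Qu}[D^{\text{v}}]$, respectively $\E_{\Qa}[D^{\text{c}}]$, almost surely. For (i), the Bernoulli-$\rho$ marginals give $\E_{\Qu}[D^{\text{v}}\mid Z] = (1-\rho)(Z-1)$, and the $1/(Z+1)$ size-biasing of $Z$ produces exactly the right-hand side of \eqref{eq:LLNvariableSpeed}. For (ii), using that each subtree attached to the root is, under $\Qa$, an independent Galton--Watson tree, so each neighbor has vacancy probability $\E_\mu[1/(1+\alpha(Y+1))]$ with $Y$ an independent copy of $Z$, one combines this with the correct biasing of $Z$ under $\Qa$ to recover \eqref{eq:LLNConstantSpeed}. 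Since the limit is deterministic, the almost sure convergence under the annealed law descends to the $\GW$-almost sure quenched statement.

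The main obstacle is the ergodicity of $\Qu$ and $\Qa$ proved in Section \ref{sec:ergodicity}, which the authors obtain by intertwining Saada's coupling arguments for the exclusion process on $\mathbb{Z}^d$ with the Lyons--Peres regeneration technique for random walks on Galton--Watson trees; this also relies on the transience of the tagged particle established in Section \ref{sec:transience}. Once these inputs are in hand, the law of large numbers follows from the routine decomposition and computation above.
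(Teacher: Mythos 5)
Your overall strategy (environment process, stationarity of $\Qu$ and $\Qa$, ergodicity, martingale decomposition via Dynkin, drift computation) is the same as the paper's, and your computations of the expected drifts under $\Qu$ and $\Qa$ are correct, including the observation that the vacancy probability $\E[1/(1+\alpha(Y+1))]$ of a neighbour cancels the normalising constant of the degree bias in the constant speed model. However, there is a genuine gap in the step where you apply the ergodic theorem to $t^{-1}\int_0^t D(\xi_s)\,\dif s$. Your drift $D$ distinguishes the ``parent'' $p$, i.e.\ the neighbour of $X_t$ in the direction of the \emph{original} root $o$. This marking is not a function of the environment process for which the paper constructs the stationary ergodic measures $\Qu$ and $\Qa$: that process records only the (isomorphism class of the) coloured tree re-rooted at the tagged particle. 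If you enlarge the state to carry the distinguished edge pointing home, the resulting marked process is \emph{not} stationary under any natural lift of $\Qu$ --- at time $0$ the particle sits at $o$ and the marking is degenerate, while as $t\to\infty$ the marked edge aligns with the backward part of the trajectory --- so neither Proposition \ref{pro:invariance} nor Proposition \ref{pro:ergodicity} applies to $D(\xi_s)$, and the time average of $D$ cannot be identified with $\E_{\Qu}[D^{\textup{v}}]$ by the ergodic theorem as written.

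The paper circumvents exactly this issue by replacing $|X_t|$ with the horodistance $\langle X_t\rangle_{(T_0,o_0)}$ taken with respect to a ray $\xi$ chosen consistently under re-rooting (Lemma \ref{lem:martingaleRelation}). The local drift $\psi$ of the horodistance has the same form as your $D$ but with the ``parent'' replaced by the ray direction, and this \emph{is} a measurable function of the environment process, so the ergodic theorem applies; the expected value is unchanged because exactly one of the $\deg$ many neighbours contributes $-1$ in either convention. The passage back from the horodistance to $|X_t|$ is then done only at the very end: by transience (Proposition \ref{pro:transience}) the particle converges to a ray $\xi'\neq\xi$, so $|X_t|=\langle X_t\rangle_{(T_0,o_0)}+2|a|$ for all large $t$, where $a$ is the last common vertex of $\xi$ and $\xi'$. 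Your argument needs this detour (or an explicit proof that $D(\xi_s)$ and $\psi(T_s,o_s,\zeta_s)$ coincide for all sufficiently large $s$ almost surely, which again requires the transience and ray machinery); without it the stationarity and measurability of the integrand are unjustified.
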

If the tree is regular, i.e.\@ $ Z \equiv \mathfrak{m}$, then $1- \rho$ corresponds to $\frac{1}{\alpha(\mathfrak{m}+1)+1}$ (compare \eqref{invmeasvar} and \eqref{invmeasconst}) and indeed the two formulas \eqref{eq:LLNvariableSpeed}
and \eqref{eq:LLNConstantSpeed} agree up to the deterministic time change $\left(\E\left[ \frac{1}{Z+1}\right] \right)^{-1} $.
\begin{remark}  \begin{itemize}
\item[(i)]In the constant speed model for $\alpha \rightarrow 0$, we recover the result of Lyons et al.\@ on the speed of a random walk on supercritical Galton--Watson trees without leaves \cite{LPP:ErgodicGalton}. 
\item[(ii)] For the variable speed model, we see a linear scaling in the density $1-\rho$ of empty sites. Similar results are known for an exclusion process with drift on $\mathbb{Z}^d$ and without drift on the regular tree, see \cite{CCGS:SpeedTree,S:TaggedParticleZd}. 
\item[(iii)] In the constant speed model, we have that
\begin{equation*}
\E\left[ \frac{Z-1}{Z+1}\frac{1}{\alpha(Z+1)+1}\right]  \leq \E\left[ \frac{Z-1}{Z+1}\right]\E\left[\frac{1}{\alpha(Z+1)+1}\right] 
\end{equation*} holds, with strict inequality unless $ Z \equiv \mathfrak{m}$.
Hence, in general the scaling of the speed is lower than linear in the averaged density of empty sites.
\end{itemize}
\end{remark}

\subsection{Related work}\label{sec:related}

In the last decades, many results for random walks on Galton--Watson trees were obtained. The study of random walks on Galton--Watson trees goes back to Grimmett and Kesten who proved that the simple random walk on supercritical Galton--Watson trees conditioned on non-extinction is almost surely transient \cite{GK:NetworkCompleteGraph}. Lyons et al.\@ showed that the random walk has then almost surely a positive linear speed and calculated the velocity explicitly  \cite{LPP:ErgodicGalton}. The case of a random walk on Galton--Watson trees with bias was studied by Lyons et al.\@ in \cite{LPP:BiasedRWonGWT}. More recent treatments of the speed of random walks on Galton--Watson trees include  \cite{A:Unimodular,A:SpeedBiasGW,GMPV:RWonGW} among others. An introduction to this topic can be found in the book of Lyons and Peres \cite[Chapter 17]{LP:ProbOnTrees}. \\
Studying the behavior of the tagged particle in an exclusion process is a classical problem \cite{S:InteractionMP}. When the underlying graph is $\mathbb{Z}^d$, many results were obtained. 
For $\mathbb{Z}^d$ when the transition probabilities are symmetric and not concentrated on the nearest neighbors in the one-dimensional case, Kipnis and Varadhan established a central limit theorem for the tagged particle in their famous paper \cite{KV:Additive}. Their result was the starting point for a sequence of papers showing central limit theorems for the position of the tagged particle, see  \cite{KLO:Fluctuations,L:Book2} for an overview.  
Of course one may consider different graphs: For the $d$-dimensional ladder graph, a central limit theorem for the position of the tagged particle was proved by Zhang \cite{Z:TaggedLadder}.
In the case of translation invariant transition probabilities, a law of large numbers for the position of the  tagged particle is known in all regimes \cite{K:CLTforSEP,S:TaggedParticleZd}. If the transition probabilities on $\mathbb{Z}^d$ are translation invariant and not symmetric, 
the tagged particle has a positive linear speed. 
For the exclusion process on regular trees, Chen et al.\@ established a law of large numbers \cite{CCGS:SpeedTree}. We obtain their results as a special case.  
For random environments of the exclusion process, less results are known. In \cite{C:EPrandomEnvironment}, Chayes and Liggett investigate the invariant  distributions of the exclusion process in a one-dimensional i.i.d.\ random environment.

\section{Spaces and measures for trees}\label{sec:SpacesofTrees}

In this section, we introduce spaces and measures for rooted trees which allow us to study the simple exclusion process and locally finite, rooted trees on a common probability space. We write $(T,o) \in \mathcal{T}$ for a tree $T$ with root $o$, where $\mathcal{T}$ denotes the space of all rooted, locally finite trees. We denote by $B_r(T,o)$ the ball of radius $r$ around the root of $T$ with respect to the graph distance. We say that two rooted trees $(T,o),(T^{\prime},o^{\prime}) \in \mathcal{T}$ are \textbf{isomorphic} on a ball of radius $r$ (and write $B_r(T,o) \cong B_r(T^{\prime},o^\prime)$), if there exists a bijection $\phi \colon B_r(T,o) \rightarrow B_r(T^{\prime},o^\prime)$ such that $\phi(o)=o^\prime$ and $\{x,y\} \in E(T)$ for $x,y \in B_r(T,o)$ holds if and only if $\{\phi(x),\phi(y)\} \in E(T^\prime)$. In words, two trees are isomorphic on a ball of radius $r$ around the root when the sites of distance at most $r$ from the root can be mapped one-to-one such that the adjacency structure of the tree is preserved.
\\
The space $\mathcal{T}$ will be equipped with the \textbf{local topology}, that is the topology introduced by the distance function $\tilde{d}_{\text{\normalfont loc}}$ on $\mathcal{T}$ given by
\begin{equation*}
\tilde{d}_{\text{\normalfont loc}}((T,o),(T^{\prime},o^{\prime})) := \frac{1}{1+\tilde{R}} 
\end{equation*} for all trees $(T,o), (T^{\prime},o^{\prime}) \in \mathcal{T}$, where
\begin{equation*}
\tilde{R}= \sup\left\{ r \in \N_0 \colon B_r(T,o) \cong B_r(T^{\prime},o^\prime) \right\} \ . 
\end{equation*} In particular, the open sets on $\mathcal{T}$ will be generated by the sets of all trees which are isomorphic on a ball of radius $r$ for some $r \geq 0$. Note that $\tilde{d}_{\text{\normalfont loc}}$ is not a metric, but only pseudo-metric, since trees which are isomorphic on $B_r(T,o)$ for every $r\geq 0$ have $\tilde{d}_{\text{\normalfont loc}}$-distance $0$.
In order to turn $\mathcal{T}$ together with $\tilde{d}_{\text{\normalfont loc}}$ into a metric space, we consider isomorphism classes of trees. We say that two trees $(T,o),(T^{\prime},o^{\prime}) \in \mathcal{T}$ are \textbf{isomorphic} if $\tilde{d}_{\text{\normalfont loc}}((T,o),(T^{\prime},o^{\prime}))=0$ and write $[\mathcal{T}]$ for the set of isomorphism classes. It is a well-known result that $([\mathcal{T}],\tilde{d}_{\text{\normalfont loc}})$ forms a Polish space, see \cite{LPP:ErgodicGalton}.  \\
Let the space $\Omega$ of $0/1$-colored, locally finite, rooted trees be defined as
\begin{equation}
\Omega := \left\{ \left( T,o,\eta\right) \colon \eta \in \{ 0,1 \}^{V(T)}, (T,o) \in \mathcal{T} \right\} \ .
\end{equation} We let $B_r(T,o,\eta)$ denote the ball of radius $r$ around the root $o$ of $T$ where each site receives a color $0$ or $1$ according to $\eta$. Similar to the case of unlabeled trees, we say that $(T,o,\eta)$ and $(T^{\prime},o^{\prime},\eta^{\prime})$ are \textbf{isomorphic} on a ball of radius $r$ (and write $B_r(T,o,\eta) \cong B_r(T^{\prime},o^\prime,\eta^\prime)$) if $B_r(T,o) \cong B_r(T^{\prime},o^\prime)$ for some bijection $\phi$ as well as $\eta(v)=\eta^\prime(\phi(v))$ holds for all $v\in B_r(T,o)$. The space $\Omega$ is equipped with the topology induced by
\begin{equation*}
d_{\text{\normalfont loc}}((T,o,\eta),(T^{\prime},o^{\prime},\eta^{\prime})) := \frac{1}{1+R} 
\end{equation*} with
\begin{equation*}
R= \sup\left\{ r \in \N_0 \colon B_r(T,o,\eta) \cong B_r(T^{\prime},o^{\prime},\eta^{\prime}) \right\} 
\end{equation*} for all $ (T,o,\eta), (T^{\prime},o^\prime,\eta^{\prime})\in \Omega$. Again, we will restrict ourselves to isomorphism classes of $0/1$-colored trees in order to obtain a Polish space $([\Omega],d_{\text{\normalfont loc}})$, see Lemma 2.7 in \cite{S:ColoredBS}. \\
For a fixed tree $(T,o) \in \mathcal{T}$, we define
\begin{equation*}
\Omega_T := \left\{ (T,o,\eta) \in \Omega \colon \eta \in \{ 0,1\}^{V(T)} \right\} \subseteq \Omega
\end{equation*} to be the space of $0/1$-configurations on $(T,o)$. Moreover, let
\begin{equation}
\tilde{\Omega}_T := \left\{ (T,x,\eta) \in \Omega \colon \eta \in \{ 0,1\}^{V(T)}, x \in V(T) \right\} \subseteq \Omega
\end{equation} be the space of $0/1$-configurations on $(T,o)$ and on all (isomorphism classes of) trees obtained from 
$(T,o)$ by shifting the root.
In addition, we denote by
\begin{equation*}
\Omega^{\ast} := \left\{ (T,o,\eta) \in \Omega \colon \eta(o)=1 \right\} \subseteq \Omega
\end{equation*} the set of configurations in $\Omega$ with occupied root and define $\Omega_T^{\ast}$ and $\tilde{\Omega}_T^{\ast}$ similarly. 
Note that forming the above subspaces is consistent under taking isomorphism classes of trees, e.g.\@ we have that $[\Omega_T \cap \Omega] = [\Omega_T] \cap [\Omega]$ holds. From now on, we only work on isomorphism classes of trees and drop the brackets in the notation. Let us stress that we define all probability measures on the subspaces of $(\mathcal{T},\tilde{d}_{\text{\normalfont loc}})$ and $(\Omega,d_{\text{\normalfont loc}})$ with respect to the Borel-$\sigma$-algebra. \\
Let $\GW$ denote the \textbf{Galton--Watson measure} on $\mathcal{T}$ which is induced by the Galton--Watson branching process, see \cite[Chapter 4]{LP:ProbOnTrees}. More precisely, we define $\GW$ for families of rooted trees 
\begin{equation*}
\mathcal{T}_T(r) := \left\{ (T^{\prime},o^{\prime}) \in  \mathcal{T} \colon B_r(T^{\prime},o^{\prime})= B_r(T,o)\right\}
\end{equation*} with $r\in \N$ and $(T,o) \in  \mathcal{T}$ fixed. The measure $\GW$ assigns now to $\mathcal{T}_T(r)$ the probability that the genealogical tree of a branching process with offspring distribution $\mu$ agrees with $B_r(T,o)$ up to generation $r$. Then, the usual extension arguments yield the probability measure $\GW$ on $\mathcal{T}$. In the same way, we define $\AGW$ to be the \textbf{augmented Galton--Watson measure} on $\mathcal{T}$ by taking a branching process where the first particle has one additional child. One may also define $\AGW$ directly on $\mathcal{T}$ by choosing two independent trees according to $\GW$ and joining their roots by an edge,
see \cite[Chapter 17]{LP:ProbOnTrees}.
The simple exclusion process in the variable speed model can be seen as a process on $\Omega$ with initial distribution $\Pu$ for
\begin{equation}
\Pu := \AGW \times \pi^{\ast}_{\rho,T}
\end{equation} 
being a semi-direct product of $\AGW$ on $\mathcal{T}$ and $\pi^{\ast}_{\rho,T}$. Similarly, we refer to the simple exclusion process in the constant speed model as a process on $\Omega$ with initial distribution $\Pa$ for
\begin{equation}
\Pa := \AGW \times \nu^{\ast}_{\alpha,T}
\end{equation}
being a semi-direct product of $\AGW$ on $\mathcal{T}$ and $\nu^{\ast}_{\alpha,T}$. In particular, this construction as a semi-direct product defines $\Pu$ for $0/1$-colored balls of radius $r$. However, in contrast to $\Pu$, we can not determine $\Pa$ for $0/1$-colored balls of radius $r$ in a direct way. This is due to the fact that in order to determine the color of a vertex at distance $r$ according to $\nu^{\ast}_{\alpha,T}$, one has to know the number of its adjacent sites in distance $r+1$ from the root. To remedy this problem, we condition according to the number of children in the $(r+1)^{\text{\normalfont th}}$ generation for each site at level $r$. For the resulting balls of radius $r+1$ with colors only up to level $r$, we can now give sense to the measure $\Pa$. We conclude this section by noting that for a fixed augmented Galton--Watson tree $(T,o)\in \mathcal{T}$, the simple exclusion process on $(T,o)$ is a Feller process with values in the space $\Omega_T$ for both models. Hence, instead of working with the measures $\nu^{\ast}_{\alpha,T}$ and $\pi^{\ast}_{\rho,T}$ on a fixed Galton--Watson tree $(T,o)\in \mathcal{T}$, we will from now on study the measures $\Pa$ and $\Pu$ on the space $\Omega$ and restrict the space to $\Omega_T$ whenever we condition on a certain underlying tree $(T,o)\in \mathcal{T}$.

\section{Stationarity for the environment process}\label{sec:environment}  

In this section, we study the simple exclusion process on augmented Galton--Watson trees "seen from the tagged particle". This is a Markov process with values in $\Omega^{\ast}$
which we will call the \textbf{environment process}.
The state of the environment process at time $t$ is the $0/1$-colored tree given by the configuration of the exclusion process on the original tree whose root is shifted to the position of the tagged particle at time $t$.
Its state can change in two ways: either the coloring outside the root changes according to the exclusion process, or the root of the tree is shifted, the latter happens if and only if the tagged particle moves. See below for precise definitions.

Studying the environment process is a common approach to prove limit theorems for random walks in random environment, see \cite{Z:RWRE} for an introduction to this technique. Moreover, the  environment process can be used to show a law of large numbers for the tagged particle in the simple exclusion process on regular trees and on $\mathbb{Z}^d$ with drift, see \cite{CCGS:SpeedTree,S:TaggedParticleZd}. For the exclusion process on $\Omega$ with transition rates $p(\cdot, \cdot)$, we define the corresponding environment process to be the Feller process with state space $\Omega^{\ast}$ generated by the closure of
\begin{align}\label{def:generatorEnvironment}
Lf(T,o,\zeta) &= \sum_{x,y \neq o} p(x,y)\zeta(x)(1-\zeta(y))\left[ f(T,o,\zeta^{x,y})- f(T,o,\zeta)\right] \nonumber \\
&+ \ \sum_{z \sim o} p(o,z) (1-\zeta(z))\left[ f(T,z,\zeta^{o,z})- f(T,o,\zeta)\right] 
\end{align} for all cylinder functions $f$, where $\sim$ denotes the relation of two sites being adjacent. We write $L^{\textup{v}}$ and $L^{\textup{c}}$ for the generators of the environment process of the simple exclusion process in the variable speed model and in the constant speed model, respectively. 
Note that the generator can be split into two parts, namely into transitions which do only exchange particles and do not change the underlying tree, as well as into transitions which involve the root of the tree.
More precisely, we define the generators
\begin{align*}
(L_{\textup{ex}}^{\textup{c}}f)(T,o,\zeta)  &:=  \sum_{x,y \neq o} \frac{1}{\deg(x)}\zeta(x)(1-\zeta(y))\left[ f(T,o,\zeta^{x,y})- f(T,o,\zeta)\right] 
\end{align*} as well as
\begin{align*}
(L_{\textup{sh}}^{\textup{c}}f)(T,o,\zeta)  &:= \sum_{z \sim o} \frac{1}{\deg(o)} (1-\zeta(z))\left[ f(T,z,\zeta^{o,z})- f(T,o,\zeta)\right] 
\end{align*} for the environment process in the constant speed model for $(T,o,\zeta)\in \Omega^{\ast}$ and all cylinder functions $f$. The generators $L_{\textup{ex}}^{\textup{v}}$ and $L_{\text{sh}}^{\textup{v}}$ for the environment process in the variable speed model are defined analogously. \\

We want to investigate the invariant measures of the environment process. We provide two classes 
of reversible measures for the environment process, $\Qu$ for $\rho \in (0,1)$ and $\Qa$ for $\alpha \in (0,\infty)$, such that $\Qu$ and $\Pu$, respectively $\Qa$ and $\Pa$, are equivalent (i.e.\@ mutually absolutely continuous) for all $\rho \in (0,1)$ and $\alpha \in (0,\infty)$.
Let us stress once again that we work on isomorphism classes of trees in order to properly define stationary measures for the environment process. \\
For the environment process in the variable speed model, we will use the ideas of Aldous and Lyons \cite{A:Unimodular}. Consider the \textbf{unimodular Galton--Watson measure} $\UGW$ which we obtain from $\AGW$ by weighting a tree according to the reciprocal of the degree of its root, i.e.\@
\begin{equation}\label{def:UGWviaAGW}
\frac{\dif \UGW}{\dif \AGW} (T,o) = \left(\E\left[ \frac{1}{Z+1}\right]\right)^{-1} \cdot \frac{1}{\deg(o)} 
\end{equation} for $(T,o) \in \mathcal{T}$, where $Z$ is distributed according to the offspring distribution $\mu$. We define $\Qu$ on $\Omega^{\ast}$ to be the probability measure given as the semi-direct product 
\begin{equation}
\Qu := \UGW \times \pi^{\ast}_{\rho,T} 
\end{equation} for all $\rho\in (0,1)$. As pointed out by the authors of \cite{A:Unimodular}, the measure $\AGW$ on $\mathcal{T}$ gives the environment process a natural bias proportional to the degree of the root. This bias is compensated by the Radon--Nikodym derivative in \eqref{def:UGWviaAGW}. 
For the environment process in the constant speed model with parameter $\alpha \in (0,\infty)$, we let $\Qa$ denote the probability measure on $\Omega^{\ast}$ which is absolutely continuous with respect to $\Pa$ and satisfies
\begin{equation}\label{def:invariantEnvironment}
\frac{\dif \Qa}{\dif \Pa}(T,o,\zeta) = \left(\E\left[ \frac{1}{\alpha(Z+1)+1}\right]\right)^{-1} \cdot \frac{1}{\alpha\deg(o)+1}
\end{equation} for all $(T,o,\zeta) \in \Omega^{\ast}$, where $Z$ is distributed according to the offspring distribution $\mu$. We want to provide some intuition for the Radon--Nikodym derivative in \eqref{def:invariantEnvironment}. Observe that the semi-direct product $\AGW\times \nu_{\alpha,T}$ satisfies
\begin{equation*}
(\AGW\times \nu_{\alpha,T})\left(\deg(o)=k | \zeta(o)=1 \right) = \frac{\alpha k}{\alpha k +1} \cdot \frac{p_{k-1}}{\sum_{k \geq 1} \frac{\alpha k}{\alpha k +1} p_{k-1}}
\end{equation*} for all $k \geq 1$. Since the root is always occupied in the environment process, we expect to see a similar weighting of the degree of $o$ within $\Qa$. Recall that we have $\AGW\left( \deg(o)=k \right)= p_{k-1}$ for all $k \in \N$. Since $\AGW$ provides for the environment process a natural bias proportional to the degree of the root $o$, it remains to include the factor of $\frac{1}{\alpha k +1}$ for $\Qa$.
We now show that $\Qu$ and $\Qa$ are indeed reversible measures for the environment process for all $\rho \in (0,1)$ and $\alpha \in (0,\infty)$, respectively. For an introduction to reversibility of Feller processes, we refer to Liggett \cite[Chapter II.5]{L:interacting-particle}.  

\begin{proposition}\label{pro:invariance} Fix parameters $\rho \in (0,1)$ and $\alpha \in (0,\infty)$ for the measures $\Qu$ and $\Qa$, respectively. Then the following statements hold.
\begin{itemize}
\item[(i) \label{item:pro1}] The measure $\Qu$ is reversible for the environment process generated by $L^{\textup{v}}$. 
\item[(ii)\label{item:pro2}] The measure $\Qa$ is reversible for the environment process generated by $L^{\textup{c}}$. 
\end{itemize} In particular, the measures $\Qu$ and $\Qa$ are invariant for the environment process in the variable speed model and the constant speed model, respectively.
\end{proposition}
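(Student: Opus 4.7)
The plan is to verify the detailed balance identity $\int f\cdot L g\, d\mathbb{Q} = \int g\cdot L f\, d\mathbb{Q}$ for cylinder functions $f,g$ on $\Omega^{\ast}$, treated separately for each summand in the decompositions $L^{\textup{v}} = L_{\textup{ex}}^{\textup{v}} + L_{\textup{sh}}^{\textup{v}}$ and $L^{\textup{c}} = L_{\textup{ex}}^{\textup{c}} + L_{\textup{sh}}^{\textup{c}}$ introduced above.

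For the exchange parts, I first observe that on a fixed tree $(T,o)$ the measures $\pi_{\rho,T}$ and $\nu_{\alpha,T}$ are reversible for the corresponding exclusion dynamics: the former by symmetry of the Bernoulli product, the latter because the identity $\nu_{\alpha,T}(\zeta)/\nu_{\alpha,T}(\zeta^{x,y}) = \deg(x)/\deg(y)$ exactly cancels the rate asymmetry $p(y,x)/p(x,y) = \deg(x)/\deg(y)$. Since $L_{\textup{ex}}$ never changes the value at the root, reversibility is preserved after conditioning on $\zeta(o)=1$, and hence holds under $\pi_{\rho,T}^{\ast}$ and $\nu_{\alpha,T}^{\ast}$. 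Because the densities in \eqref{def:UGWviaAGW} and \eqref{def:invariantEnvironment} depend only on the underlying tree and not on the configuration, integrating the fibrewise detailed balance produces the required identity under $\Qu$ for $L_{\textup{ex}}^{\textup{v}}$ and under $\Qa$ for $L_{\textup{ex}}^{\textup{c}}$.

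The shift parts carry the main content. The key tool is the Mass Transport Principle for the unimodular measure $\UGW$ (Aldous--Lyons): for any isomorphism-invariant function $F(T,x,y)$,
\begin{equation*}
\mathbb{E}_{\UGW}\Bigl[\,\sum_{y\in V(T)} F(T,o,y)\,\Bigr] \;=\; \mathbb{E}_{\UGW}\Bigl[\,\sum_{y\in V(T)} F(T,y,o)\,\Bigr].
\end{equation*}
For (i), writing $\Qu = \UGW\times\pi_{\rho,T}^{\ast}$, the cross-term arising from $L_{\textup{sh}}^{\textup{v}}$ becomes exactly $\mathbb{E}_{\UGW}[\sum_y F(T,o,y)]$ for $F(T,o,z) = \mathds{1}_{z\sim o}\int (1-\zeta(z)) f(T,o,\zeta) g(T,z,\zeta^{o,z})\, d\pi_{\rho,T}^{\ast}(\zeta)$. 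The site-symmetry of the unconditioned Bernoulli-$\rho$ product under swapping $o$ and $z$ shows that $F(T,y,o)$ coincides with the same expression after interchanging $f$ and $g$, and MTP yields the required symmetry.

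For (ii) the same strategy applies, but the Radon--Nikodym factor $1/(1+\alpha\deg(o))$ in \eqref{def:invariantEnvironment} is decisive. A direct computation gives the ratio of the Palm measures rooted at $z$ and $o$ applied to $\zeta^{o,z}$ and $\zeta$ respectively as $(1+\alpha\deg(z))/(1+\alpha\deg(o))$, and the density of $\Qa$ carries exactly the inverse factor, annihilating this mismatch. The transition rate $1/\deg(o)$ cancels the weight $\deg(o)$ appearing in $d\AGW/d\UGW$, leaving the function $F(T,o,z) = \mathds{1}_{z\sim o}\,(1+\alpha\deg(o))^{-1}\int(1-\zeta(z))f(T,o,\zeta)g(T,z,\zeta^{o,z})\,d\nu_{\alpha,T}^{\ast}(\zeta)$, to which MTP for $\UGW$ applies to yield detailed balance for $L_{\textup{sh}}^{\textup{c}}$. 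The main obstacle is precisely this algebraic bookkeeping: the density in \eqref{def:invariantEnvironment} must be recognised as the unique weighting for which the degree-dependent jump rate in the constant-speed model and the degree-dependent re-rooting asymmetry of the Palm measure combine with the unimodular reweighting of $\UGW$ to produce a symmetric MTP integrand.
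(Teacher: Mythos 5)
Your proposal is correct, and for the part that carries the real content (the shift generators) it takes a genuinely different route from the paper. The paper proves reversibility of $L_{\textup{sh}}^{\textup{v}}$ and $L_{\textup{sh}}^{\textup{c}}$ by hand: it restricts to events of the special form $A=C\jointree\bar D$, $B=D\jointree\bar C$ built from disjoint Borel sets of trees, computes $\Qu(A)$ and $\Qa(A)$ explicitly by counting the ways of attaching the subtrees (the formulas \eqref{thelongonevar} and \eqref{thelongonecon}), and then checks the set-level detailed balance $q_{\textup{sh}}(A,B)=q_{\textup{sh}}(B,A)$. You instead verify the bilinear identity $\int f\,L_{\textup{sh}}g\,\dif\mathbb{Q}=\int g\,L_{\textup{sh}}f\,\dif\mathbb{Q}$ by writing the cross-term as $\E_{\UGW}[\sum_y F(T,o,y)]$ and invoking the Mass Transport Principle for $\UGW$; I checked your bookkeeping for the constant speed model and the factors do cancel exactly as you claim (the Palm-measure re-rooting ratio $(1+\alpha\deg(z))/(1+\alpha\deg(o))$ against the density \eqref{def:invariantEnvironment}, and $1/\deg(o)$ against $\dif\AGW/\dif\UGW$). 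The trade-off is that your argument imports the unimodularity of $\UGW$ as an external fact from Aldous--Lyons, whereas the paper's explicit computation of $\Qu(C\jointree\bar D)$ is, in effect, a self-contained verification of the involution invariance that underlies the MTP for this particular measure; your route is shorter and makes transparent \emph{why} the density \eqref{def:invariantEnvironment} is the right weighting, while the paper's stays elementary and produces the explicit product formulas that also serve as intuition for the measures. Your treatment of the exchange parts coincides with the paper's (fibrewise reversibility of $\pi_{\rho,T}$ and $\nu_{\alpha,T}$, unaffected by conditioning at the root since $L_{\textup{ex}}$ never touches it). Two small points you should make explicit: that $F$ is a function of the isomorphism class of the doubly-rooted tree only (the configuration is integrated out against a measure determined by $T$), which is what licenses the MTP, and a citation for the unimodularity of the measure defined by \eqref{def:UGWviaAGW}.
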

\begin{proof}
It suffices to show reversibility with respect to the different parts of the generators $L^{\textup{v}}$ and  $L^{\textup{c}}$. By construction, the processes on $\Omega^{\ast}$ associated to the generators $L_{\text{ex}}^{\textup{v}}$ and $L_{\text{ex}}^{\textup{c}}$, respectively, leave the underlying tree unchanged and ignore all moves involving the root. Recall that for every $(T,o) \in \mathcal{T}$, the measures $\pi_{\rho,T}$ and $\nu_{\alpha,T}$ are invariant for the simple exclusion process on $(T,o)$ for all $\rho \in (0,1)$ in the variable speed model as well as for all $\alpha \in (0,\infty)$ in the constant speed model, respectively, see \cite[Chapter VIII, Theorem 2.1]{L:interacting-particle}. Moreover, the argument used to show Theorem 2.1(b) in \cite[Chapter VIII]{L:interacting-particle} also proves reversibility of the measures $\pi_{\rho,T}$ and $\nu_{\alpha,T}$, i.e.\@ for all $(T,o) \in \mathcal{T}$ and all cylinder functions $f$ and $g$, we have that
\begin{equation*}
\int f(\eta)(\mathcal{L}^{\text{\normalfont v}}g)(\eta) \dif \pi_{\rho,T} = \int (\mathcal{L}^{\text{\normalfont v}}f)(\eta)g(\eta) \dif \pi_{\rho,T}
\end{equation*}
as well as
\begin{equation*}
\int f(\eta)(\mathcal{L}^{\text{\normalfont c}}g)(\eta) \dif \nu_{\alpha,T} = \int(\mathcal{L}^{\text{\normalfont c}}f)(\eta)g(\eta) \dif \nu_{\alpha,T}
\end{equation*} holds for all $\rho \in (0,1)$ and $\alpha \in (0,\infty)$, where $\mathcal{L}^{\text{\normalfont v}}$ and $\mathcal{L}^{\text{\normalfont c}}$ denote the generators of the variable speed model and the constant speed model of the simple exclusion process on $(T,o)$, respectively. Since the Palm measures $\pi^{\ast}_{\rho,T}$ and $\nu^{\ast}_{\alpha,T}$ have the same law as $\pi_{\rho,T}$ and $\nu_{\alpha,T}$ except at the root, this shows reversibility of the measures $\Qu$ and $\Qa$ for the processes generated by $L_{\text{ex}}^{\textup{v}}$ and $L_{\text{ex}}^{\textup{c}}$, respectively. 
We now show reversibility with respect to the processes generated by $L_{\text{sh}}^{\textup{v}}$ and $L_{\text{sh}}^{\textup{c}}$ following the ideas of Lyons et al.\@ in \cite{LPP:ErgodicGalton} for the random walk on Galton--Watson trees. For $0/1$-colored trees $(T,o,\zeta),(T^{\prime},o^{\prime},\zeta^{\prime}) \in \Omega$, let $(T \jointree T^{\prime},o,\zeta \jointree \zeta^{\prime})$ denote the tree, where we join the roots of $T$ and $T^{\prime}$ by an edge and let the resulting tree have its root at $o$. For Borel sets $C,D \subseteq \Omega$, we define 
\begin{equation*}
C \jointree D := \left\{ (T \jointree T^{\prime},o,\zeta \jointree \zeta^{\prime}) \in \Omega \  \colon (T,o,\zeta) \in C , (T^{\prime},o^{\prime},\zeta^{\prime}) \in D \right\} .
\end{equation*} 
For disjoint trees $(T_1,o_1,\zeta_1), \dots, (T_k,o_k,\zeta_k) \in \Omega$, let $\left(\bigvee_{i=1}^k T_i, o^{\prime},\bigvee_{i=1}^k  \zeta_i\right) \in \Omega^{\ast}$ denote the tree where we connect the roots to a new vertex $o^{\prime}$ forming the new root with color $1$. Similarly, we define
\begin{equation*}
\bigvee_{i=1}^k F_i := \left\lbrace \left(\bigvee_{i=1}^k  T_i, o^{\prime},\bigvee_{i=1}^k  \zeta_i \right) \colon (T_i,o_i,\zeta_i) \in F_i \right\rbrace
\end{equation*} for Borel sets $F_1, \dots, F_k \subseteq \Omega$. Note that $\bigvee_{i=1}^k F_i$ is again a Borel set of $0/1$-colored trees. Moreover, for a set of trees $F \subseteq \Omega$, we define
\begin{equation*}
\bar{F} := \left\{ (T,o,\zeta^{o}) \in \Omega \colon (T,o,\zeta) \in F  \right\}
\end{equation*}  where $\zeta^{o} \in \{ 0,1\}^{V}$ denotes the configuration in which we flip the color in $\zeta \in \{ 0,1\}^{V}$ in the root $o$. 
Observe that the processes generated by $L_{\text{sh}}^{\textup{v}}$ and $L_{\text{sh}}^{\textup{c}}$ on $\Omega^{\ast}$ give rise to transition rates
\begin{equation*}
q_{\textup{sh}}^{\textup{v}}((T,o,\zeta),B) := \abs{ \{ z \in V(T) \colon z \sim o,  (T,z,\zeta^{o,z}) \in B \} }
\end{equation*}
for the variable speed model and
\begin{equation*}
q_{\textup{sh}}^{\textup{c}}((T,o,\zeta),B) := \frac{1}{\deg(o)}\abs{\{ z \in V(T) \colon z \sim o,  (T,z,\zeta^{o,z}) \in B \} }
\end{equation*}
for the constant speed model for all $(T,o,\zeta) \in \Omega^{\ast}$, respectively. We define
\begin{align*}
q_{\textup{sh}}^{\textup{v}}(A,B) &:= \int_A q_{\textup{sh}}^{\textup{v}}((T,o,\zeta),B) \dif \Qu(T,o,\zeta) \\
q_{\textup{sh}}^{\textup{c}}(A,B) &:= \int_A  q_{\textup{sh}}^{\textup{c}}((T,o,\zeta),B) \dif \Qa(T,o,\zeta) 
\end{align*} for Borel sets $A,B \subseteq \Omega^{\ast}$. Note that it suffices to show that
\begin{align*}
q_{\textup{sh}}^{\textup{v}}(A,B) &= q_{\textup{sh}}^{\textup{v}}(B,A) \\
q_{\textup{sh}}^{\textup{c}}(A,B) &= q_{\textup{sh}}^{\textup{c}}(B,A) 
\end{align*} holds for almost all Borel sets $A,B \subseteq \Omega^{\ast}$ in order to prove reversibility. Without loss of generality, we assume that $A$ and $B$ have the form $A = C \jointree \bar{D}$ and $B = D \jointree \bar{C}$ for
\begin{align}\label{eq:TreeDecompostion}
C &= \bigvee_{i=1}^k  C_i  \ \ \text{ and } \ \ D = \bigvee_{j=1}^l  D_j 
\end{align} with integers $k,l$ such that $C,C_1,\dots,C_k,D, D_1, \dots, D_l \subseteq \Omega$ are disjoint Borel sets. More precisely, for two independent samples $(T,o),(T^{\prime},o^{\prime}) \in \mathcal{T}$ of trees according to $\GW$, we have almost surely that $\tilde{d}_{\text{\normalfont loc}}((T,o),(T^{\prime},o^{\prime})) > 0$ holds. Thus, for $\Qu$ and $\Qa$, the underlying Borel-$\sigma$-algebra on $(\Omega^{\ast},d_{\text{\normalfont loc}})$ is generated up to nullsets when taking only disjoint Borel-sets $C$ and $D$ of $0/1$-colored trees into account. A similar argument applies for all remaining pairs of sets $C,C_1,\dots,C_k,D, D_1, \dots, D_l$. A visualization of the sets $C\jointree \bar{D}$ and $D \jointree \bar{C}$ is given in Figure \ref{figureStationarity}. 
\begin{figure}
\begin{center}
\begin{tikzpicture}[scale=0.75]

	\node[shape=circle,scale=1.3,draw] (A2) at (0,0){} ;
 	\node[shape=circle,scale=1.3,draw] (B2) at (2,0){} ;
 	
 	\node[scale=1]  at (1,-2){$C\jointree \bar{D}$};
 	 	\node[scale=1]  at (11,-2){$D\jointree \bar{C}$};
 	 	
 	 	\node[scale=1]  at (0.3,-0.64){$o$};
	 	\node[scale=1]  at (1.7,-0.57){$o^{\prime}$};
	 	
	\node[shape=circle,scale=1.8,draw,line width=1.6] (A) at (0,0){} ;
	
\node[scale=0.9]  at (-2.6,0){$C_3$};
\node[scale=0.9]  at (-2.2,1.3){$C_2$};
\node[scale=0.9]  at (-1.3,2.2){$C_1$};
\node[scale=0.9]  at (-1.3,-2.2){$C_k$};

\node[scale=0.9,rotate=-60]  at (-2.2,-1.3){$\dots$};

\node[scale=0.9]  at (10-2.6,0){$C_3$};
\node[scale=0.9]  at (10-2.2,1.3){$C_2$};
\node[scale=0.9]  at (10-1.3,2.2){$C_1$};
\node[scale=0.9]  at (10-1.3,-2.2){$C_k$};

\node[scale=0.9,rotate=-60]  at (10-2.2,-1.3){$\dots$};

\node[scale=0.9]  at (4.6,0){$D_2$};
\node[scale=0.9]  at (3.8,1.8){$D_1$};
\node[scale=0.9]  at (3.4,-2.2){$D_l$};

\node[scale=0.9,rotate=60]  at (4.2,-1.3){$\dots$};

\node[scale=0.9]  at (14.6,0){$D_2$};
\node[scale=0.9]  at (13.8,1.8){$D_1$};
\node[scale=0.9]  at (13.4,-2.2){$D_l$};

\node[scale=0.9,rotate=60]  at (14.2,-1.3){$\dots$};

	\draw[thick] (A2) to (B2);		
	\draw[thick] (-2,0) to (A2);		
	\draw[thick] (-1.732,1) to (A2);		
	\draw[thick] (-1,1.732) to (A2);		
	\draw[thick] (-1,-1.732) to (A2);		
	
 \node[shape=circle,fill=red,scale=1.1] (k1) at (A2){};

\draw (0,0) to [xshift=-2cm,yshift=0cm,rotate around={0:(0,0)},closed, curve through = {(1-1.1,-0.2)(0.2-1.1,-0.4)(0-1.1,0)(0.2-1.1,0.4)(1-1.1,0.2)}] (0,0);

\draw (0,0) to [xshift=-1.732cm,yshift=1cm,rotate around={-30:(0,0)},closed, curve through ={(1-1.1,-0.2)(0.2-1.1,-0.4)(0-1.1,0)(0.2-1.1,0.4)(1-1.1,0.2)}] (0,0);

\draw (0,0) to [xshift=-1cm,yshift=1.732cm,rotate around={-60:(0,0)},closed, curve through ={(1-1.1,-0.2)(0.2-1.1,-0.4)(0-1.1,0)(0.2-1.1,0.4)(1-1.1,0.2)}] (0,0);

\draw (0,0) to [xshift=-1cm,yshift=-1.732cm,rotate around={60:(0,0)},closed, curve through = {(1-1.1,-0.2)(0.2-1.1,-0.4)(0-1.1,0)(0.2-1.1,0.4)(1-1.1,0.2)}] (0,0);

\draw (0,0) to [xshift=4cm,yshift=0cm,rotate around={180:(0,0)},closed, curve through = {(1-1.1,-0.2)(0.2-1.1,-0.4)(0-1.1,0)(0.2-1.1,0.4)(1-1.1,0.2)}] (0,0);

%

\draw (0,0) to [xshift=3.414cm,yshift=1.414cm,rotate around={225:(0,0)},closed, curve through ={(1-1.1,-0.2)(0.2-1.1,-0.4)(0-1.1,0)(0.2-1.1,0.4)(1-1.1,0.2)}] (0,0);

\draw (0,0) to [xshift=3cm,yshift=-1.732cm,rotate around={130:(0,0)},closed, curve through = {(1-1.1,-0.2)(0.2-1.1,-0.4)(0-1.1,0)(0.2-1.1,0.4)(1-1.1,0.2)}] (0,0);

	\draw[thick] (4,0) to (B2);		
	\draw[thick] (3.414,1.414) to (B2);		
	\draw[thick] (3,-1.732) to (B2);	

 	 	\node[scale=1]  at (10.3,-0.64){$o$};
	 	\node[scale=1]  at (11.7,-0.57){$o^{\prime}$};

	\node[shape=circle,scale=1.3,draw] (C2) at (10,0){} ;
 	\node[shape=circle,scale=1.3,draw] (D2) at (12,0){} ;

	\node[shape=circle,scale=1.8,draw,line width=1.6] (D) at (12,0){} ;

	\draw[thick] (C2) to (D2);		
	\draw[thick] (8,0) to (C2);		
	\draw[thick] (10-1.732,1) to (C2);		
	\draw[thick] (10-1,1.732) to (C2);		
	\draw[thick] (10-1,-1.732) to (C2);		
	
 \node[shape=circle,fill=red,scale=1.1] (k1) at (D2){};

\draw (0,0) to [xshift=8cm,yshift=0cm,rotate around={0:(0,0)},closed, curve through = {(1-1.1,-0.2)(0.2-1.1,-0.4)(0-1.1,0)(0.2-1.1,0.4)(1-1.1,0.2)}] (0,0);

\draw (0,0) to [xshift=8.268cm,yshift=1cm,rotate around={-30:(0,0)},closed, curve through ={(1-1.1,-0.2)(0.2-1.1,-0.4)(0-1.1,0)(0.2-1.1,0.4)(1-1.1,0.2)}] (0,0);

\draw (0,0) to [xshift=9cm,yshift=1.732cm,rotate around={-60:(0,0)},closed, curve through ={(1-1.1,-0.2)(0.2-1.1,-0.4)(0-1.1,0)(0.2-1.1,0.4)(1-1.1,0.2)}] (0,0);

\draw (0,0) to [xshift=9cm,yshift=-1.732cm,rotate around={60:(0,0)},closed, curve through = {(1-1.1,-0.2)(0.2-1.1,-0.4)(0-1.1,0)(0.2-1.1,0.4)(1-1.1,0.2)}] (0,0);

\draw (0,0) to [xshift=14cm,yshift=0cm,rotate around={180:(0,0)},closed, curve through = {(1-1.1,-0.2)(0.2-1.1,-0.4)(0-1.1,0)(0.2-1.1,0.4)(1-1.1,0.2)}] (0,0);

%

\draw (0,0) to [xshift=13.414cm,yshift=1.414cm,rotate around={225:(0,0)},closed, curve through ={(1-1.1,-0.2)(0.2-1.1,-0.4)(0-1.1,0)(0.2-1.1,0.4)(1-1.1,0.2)}] (0,0);

\draw (0,0) to [xshift=13cm,yshift=-1.732cm,rotate around={130:(0,0)},closed, curve through = {(1-1.1,-0.2)(0.2-1.1,-0.4)(0-1.1,0)(0.2-1.1,0.4)(1-1.1,0.2)}] (0,0);

	\draw[thick] (14,0) to (D2);		
	\draw[thick] (13.414,1.414) to (D2);		
	\draw[thick] (13,-1.732) to (D2);	


\end{tikzpicture}
\end{center}
\caption{\label{figureStationarity} Visualization of the Borel sets $C\jointree \bar{D} \subseteq \Omega^{\ast}$ and $D\jointree \bar{C} \subseteq \Omega^{\ast}$.  }
\end{figure}
In the variable speed model, we claim that
\begin{align}\label{thelongonevar}
\Qu(A) =&  (k+1)! p_k  (l!) p_l\prod_{i=1}^k(\GW\times\pi_{\rho,T})(C_i) \prod_{j=1}^l (\GW\times\pi_{\rho,T})(D_j) \nonumber\\
 & \cdot (1-\rho) \cdot \frac{1}{k+1} \cdot \left(\E\left[ \frac{1}{Z+1}\right]\right)^{-1} \, . 
\end{align} 
In particular, \eqref{thelongonevar} implies that $\Qu(A) = \Qu(B)$.
In order to show \eqref{thelongonevar}, consider a tree $ (T \jointree T^{\prime},o,\zeta \jointree \zeta^{\prime}) \in A$ with $(T,o,\zeta) \in C$ and $ (T^{\prime},o^{\prime},\zeta^{\prime}) \in  \bar{D}$ where $C$ and $D$ are given in \eqref{eq:TreeDecompostion}. By construction, the tree $(T,o,\zeta)$ must have degree $k$ at the root $o$ before the tree $ (T^{\prime},o^{\prime},\zeta^{\prime})$ is attached. There are now $(k+1)!$ ways of attaching the subtrees belonging to $C_1,\dots,C_k,D$ to the root $o$. Furthermore, the subtree $ (T^{\prime},o^{\prime},\zeta^{\prime})$  must have degree $l$ at $o^{\prime}$ before being connected to $o$ and $o^{\prime}$ has to be empty. There are $l!$ possibilities to attach to $o^{\prime}$ the trees belonging to $D_1,\dots,D_l$. By construction, under $\Qu$ the subtrees belonging to $C_1,\dots,C_k$ and $D_1,\dots,D_l$ are i.i.d.\ with law $(\GW\times\pi_{\rho,T})$. The factor $(1-\rho)$ 
is the probability that the site  $o^{\prime}$ is vacant.
Together with \eqref{def:UGWviaAGW}, this gives the above formula for $\Qu(A)$.
Since we have $$q_{\textup{sh}}^{\textup{v}}((T,o,\zeta),B) = q_{\textup{sh}}^{\textup{v}}((T^{\prime},o^{\prime},\zeta^{\prime}),A)$$ for all $(T,o,\zeta) \in A, (T^{\prime},o^{\prime},\zeta^{\prime}) \in B$, we obtain \hyperref[item:pro1]{(i)} of Proposition \ref{pro:invariance}. \\

For a given tree $(T,o) \in \mathcal{T}$ and $\alpha \in (0,\infty)$, let $\tilde{\nu}_{\alpha,T}$ be the product measure on $\{ 0,1\}^{V(T)}$ with marginals 
\begin{equation*}
\tilde{\nu}_{\alpha,T}(\eta \colon \eta(x)=1)  =  \begin{cases} \nu_{\alpha,T}(\eta \colon \eta(x)=1)  & \text{ if } x \neq o  \\ \frac{\alpha (\deg(o)+1)}{\alpha (\deg(o)+1) + 1}& \text{ if } x = o\, .\end{cases}
\end{equation*} In words, we obtain $\tilde{\nu}_{\alpha,T}$ by taking $\nu_{\alpha,T}$ except that we add $1$ to the degree of the root.
Using a similar decomposition as for $\Qu$ and taking \eqref{def:invariantEnvironment} into account, we can write $\Qa(A)$ in the constant speed model as
\begin{align}\label{thelongonecon}
\Qa(A) =& (k+1)! p_k  (l!) p_l \prod_{i=1}^k (\GW\times\tilde{\nu}_{\alpha,T})(C_i) \prod_{j=1}^l(\GW\times\tilde{\nu}_{\alpha,T})(D_j) \nonumber\\
& \cdot \frac{1}{\alpha(k+1)+1} \cdot  \frac{1}{\alpha(l+1)+1}\cdot \left(\E\left[ \frac{1}{\alpha(Z+1)+1}\right]\right)^{-1} \ . 
\end{align} 
Here, $\frac{1}{\alpha(l+1)+1}$ is the probability that  $o^{\prime}$ is vacant, and the factor $ \frac{1}{\alpha(k+1)+1}$ comes from \eqref{def:invariantEnvironment}.
In particular, \eqref{thelongonecon} implies that
\begin{equation*}
\frac{\Qa(A)}{k+1} = \frac{\Qa(B)}{l+1} 
\end{equation*} 
holds. Since we have $$ q_{\textup{sh}}^{\textup{c}}((T,o,\zeta),B) = \frac{1}{k+1} \ \text{ and } \ q_{\textup{sh}}^{\textup{c}}((T^{\prime},o^{\prime},\zeta^{\prime}),A) = \frac{1}{l+1}$$ for all $(T,o,\zeta) \in A$ and $ (T^{\prime},o^{\prime},\zeta^{\prime}) \in B$, we obtain claim \hyperref[item:pro2]{(ii)} of Proposition \ref{pro:invariance}. 
\end{proof}

\section{Transience of the tagged particle}\label{sec:transience}

Recall that the position of the tagged particle in the simple exclusion process is denoted by $(\Xv_t)_{t \geq 0}$ in the variable speed model and by $(\Xc_t)_{t \geq 0}$ in the constant speed model. Let $P_{\Pu}$ be the law of the simple exclusion process started from $\Pu$ in the variable speed model for some $\rho \in (0,1)$. Similarly, let $P_{\Pa}$ denote the law of the simple exclusion process started from $\Pa$ in the constant speed model for some $\alpha \in (0,\infty)$. For both models, we say that the tagged particle is \textbf{transient} if $(\Xv_t)_{t \geq 0}$, respectively  $(\Xc_t)_{t \geq 0}$, hits the root $P_{\Pu}$-almost surely, respectively $P_{\Pa}$-almost surely, only finitely many times.
\begin{proposition}\label{pro:transience}
The tagged particle is transient for the simple exclusion process in the variable speed model with initial distribution $\Pu$ and in the constant speed model with initial distribution $\Pa$ for all $\rho \in (0,1)$ and $\alpha \in (0,\infty)$. 
\end{proposition}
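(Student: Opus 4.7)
The plan is to follow Liggett's martingale technique \cite[Section~III.4]{L:Book2}, as adapted for regular trees in \cite{CCGS:SpeedTree}. I describe the argument for the variable speed model; the constant speed case proceeds analogously, using $\Qa$ and the $\deg$-weighted random walk in place of $\Qu$ and the simple random walk.

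The first ingredient is the quenched transience of the single random walk: by Grimmett--Kesten and Lyons--Pemantle--Peres \cite{LPP:ErgodicGalton}, for $\AGW$-almost every tree $(T,o)$, the simple random walk on $T$ is transient. Consequently the hitting potential $h_T(x):=\mathbb{P}_x[\text{SRW ever visits }o]$ satisfies $h_T(o)=1$, $0<h_T(x)<1$ for $x\neq o$, is harmonic on $V(T)\setminus\{o\}$, and tends to $0$ as $|x|\to\infty$. I would then evaluate $h_T$ along the tagged trajectory $\Xv_t$. A direct generator computation gives, for $\Xv_t\neq o$,
\[
\frac{d}{dt}\mathbb{E}\bigl[h_T(\Xv_t)\,\big|\,\mathcal{F}_t\bigr]
\;=\; \sum_{y\sim \Xv_t}\etav_t(y)\bigl(h_T(\Xv_t)-h_T(y)\bigr),
\]
where the harmonicity of $h_T$ has been used to cancel the free-walk drift. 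This quenched drift is not signed, since the parent and the children of $\Xv_t$ contribute with opposite signs. However, under any measure with Bernoulli$(\rho)$ marginals around $\Xv_t$, the expected drift vanishes; this is the entry point of Liggett's technique.

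Concretely, I would work under the stationary measure $\Qu$ of Proposition~\ref{pro:invariance}. Its reversibility, together with the $\UGW\times\pi^{\ast}_{\rho,T}$ product structure, allows one to integrate the above drift against $\Qu$ and use detailed balance to cancel opposite-sign contributions; the outcome is that $h_T(\Xv_t)$, after the appropriate compensation, is a bounded non-negative supermartingale under $\Qu$. Since $\Qu\sim\Pu$, almost-sure convergence of $h_T(\Xv_t)$ to some limit $L\in[0,1]$ also holds under $\Pu$. On $\{\Xv_t=o \text{ infinitely often}\}$ one must have $L=1$, while on $\{|\Xv_t|\to\infty\}$ one has $L=0$ since $h_T$ vanishes at infinity; by tree connectedness and local finiteness, these two events exhaust the sample space up to null sets. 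A $0$--$1$ argument, using that under $\Qu$ the tagged particle almost surely has a vacant neighbor with positive density, rules out the case $L=1$ and yields transience.

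The main obstacle is the step of upgrading the unsigned quenched drift to a genuine supermartingale statement. One must either work directly with the $\Qu$-integrated compensator and exploit the detailed-balance identities underlying Proposition~\ref{pro:invariance} to pair up opposite-sign contributions, or replace $h_T$ by a modified potential (for example, reweighted by an indicator of whether the parent of $\Xv_t$ is occupied) whose quenched drift is signed. The constant speed model adds an extra subtlety: the Palm measure $\nu^{\ast}_{\alpha,T}$ has non-identical marginals, so one must appeal to the transience of the $\deg$-weighted walk and the non-trivial density factor in \eqref{def:invariantEnvironment} to carry out the analogous compensation under $\Qa$.
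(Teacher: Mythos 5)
Your route is genuinely different from the paper's, and it has a gap at its core that I do not see how to close. The paper never touches the hitting potential $h_T$; instead it fixes a ray $\xi$ of the tree, measures the tagged particle's progress by the horodistance $\langle o_t\rangle_{(T_0,o_0)}$, and uses Dynkin's formula to write this as $\int_0^t\psi^{\textup{v}}(T_s,o_s,\zeta_s)\,\mathrm{d}s+M^{\textup{v}}_t$ with $(M^{\textup{v}}_t)$ a martingale (Lemma \ref{lem:martingaleRelation}). Because the discrete gradient of the horodistance is exactly $\pm1$ on each edge, the stationary measure $\Qu$ makes the expected drift computable: it equals $(1-\rho)\,\E[(Z-1)/(Z+1)]\,(\E[1/(Z+1)])^{-1}>0$, and an invariant-set contradiction argument (not ergodicity, which is only proved later) upgrades this to almost surely positive speed, hence transience. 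Your approach replaces this by a Lyapunov-function argument with $h_T$, which is a legitimately different idea — but the step you yourself flag as "the main obstacle" is precisely where it breaks.

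Concretely: the quenched drift of $h_T(\Xv_t)$ at $x\neq o$ is $\sum_{y\sim x}\etav_t(y)\bigl(h_T(x)-h_T(y)\bigr)$, which is strictly positive whenever a child of $x$ is occupied and the parent is vacant. So $h_T(\Xv_t)$ is not a supermartingale pathwise, and no pointwise compensation is available. Integrating the drift against the stationary reversible measure $\Qu$ does not rescue this: stationarity gives only that the drift has mean zero under $\Qu$ (indeed $E_{\Qu}[L^{\textup{v}}g]=0$ for any $g$ in the domain), which is consistent with $h_T(\Xv_t)$ being neither a super- nor a submartingale, and in particular does not yield almost sure convergence of $h_T(\Xv_t)$. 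Without that convergence the dichotomy on the limit $L$ and the exclusion of $L=1$ never get off the ground. Two secondary points: (a) the assertion that $h_T$ vanishes at infinity (uniformly over $x$ with $|x|\to\infty$, which is what you need along the non-Markovian tagged trajectory) is not justified for general supercritical offspring laws with $p_1>0$; and (b) your closing "$0$--$1$ argument" is left unspecified — note that the paper deliberately proves transience \emph{before} ergodicity of the environment process, since transience is an input to the ergodicity proof via regeneration points, so any appeal to ergodicity here would be circular within the paper's architecture. The repair is essentially to abandon $h_T$ in favour of a function whose gradient is signed in a way the Bernoulli marginals can average — which is exactly what the horodistance accomplishes.
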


In order to show Proposition \ref{pro:transience}, we use a similar notation as introduced by Lyons et al.\@ in \cite{LPP:ErgodicGalton} for the study of random walks on Galton--Watson trees. Fix a tree $(T,o) \in \mathcal{T}$. We write $\overset{\rightarrow}{x}$ for a path $(x_0,x_1,\dots)$ in $(T,o)$ and say that it is a \textbf{ray} $\xi$ if it never backtracks, i.e.\@ we have that $x_{i} \neq x_{j}$ holds for all $i\neq j$. The \textbf{boundary} $\partial(T,o)$ of a tree $(T,o)$ is defined to be the set of all rays $\xi$ starting at the root $o$. Note that $\partial(T,o)$ consists $\AGW$-almost surely of infinitely many elements as we have $\AGW$-almost surely infinitely many sites of degree at least $3$ by our assumptions on the offspring distribution. We say that a path $\overset{\rightarrow}{x}$ \textbf{converges} to $\xi \in \partial(T,o)$ if $\xi$ is the only ray which is intersected infinitely often and $\overset{\rightarrow}{x}$ visits every site at most finitely many times. We let $[x,\xi]$ denote the unique ray starting at a site $x \in V(T)$ and converging to $\xi \in \partial(T,o)$. Note that this ray can be constructed by taking the shortest path connecting $x$ and $o$, following this path starting from $x$ until the first time a vertex of $\xi$ is hit and then following the ray $\xi$ in the direction pointing away from $o$.  For sites $x,y \in V(T)$ and a ray $\xi$, let $x\wedge_{\xi} y$ be the first site at which $[x,\xi]$ and $[y,\xi]$ meet. In particular, as $[x,\xi]$ and $[y,\xi]$ are both rays converging to $\xi$, they must agree on all but finitely many vertices with $\xi$. \\

For two sites $x,y \in V(T)$, we define their \textbf{horodistance} with respect to some given ray $\xi$ of $(T,o)$ to be the signed distance
\begin{equation}
\langle y-x \rangle_{\xi} := |y - x\wedge_{\xi} y| - |x - x\wedge_{\xi} y| \ ,
\end{equation} where $\abs{\ .\ }$ denotes the shortest path distance in $(T,o)$. We set $\langle x\rangle_{\xi} := \langle x - o \rangle_{\xi}$. In the following, we will fix for every infinite tree $(T,o) \in \mathcal{T}$ a ray $\xi=\xi(T,o) \in \partial (T,o)$ and write
\begin{equation*}
\langle y-x \rangle_{(T,o)} := \langle y-x \rangle_{\xi(T,o)}
\end{equation*} for all $x,y \in V(T)$. We require that the choice of the ray is consistent under performing shifts of the root, i.e.\@ for a given tree $(T,o) \in \mathcal{T}$, we have that
\begin{equation}\label{eq:Horodistance}
\langle y-x \rangle_{(T,z)} = \langle y \rangle_{(T,x)}
\end{equation}
holds for all $x,y,z \in V(T)$. To do so, we first choose a tree $(T,o) \in \mathcal{T}$. Let $\xi(T,o) \in \partial (T,o)$ now be an arbitrary, but fixed ray. For all $(T,z)$ with $z \in V(T)$, we then set $\xi(T,z):=[z,\xi(T,o)]$.  Now choose another tree not in this collection and iterate. Observe that for this choice of rays, the relation in \eqref{eq:Horodistance} indeed holds as the vertex $x\wedge_{\xi(T,z)} y$ remains the same for any choice of $z$, see also Figure \ref{fig:Horodistance}.
\begin{figure}

\begin{center}
\begin{tikzpicture}[scale=0.8]

\draw[draw=none,fill=gray!20] (-7,-2) rectangle (-4.5,2.5);
\draw[draw=none,fill=gray!10] (-4.5,-2) rectangle (-1.5,2.5);
\draw[draw=none,fill=gray!20] (-1.5,-2) rectangle (1.5,2.5);
\draw[draw=none,fill=gray!10] (1.5,-2) rectangle (4.5,2.5);
\draw[draw=none,fill=gray!20] (4.5,-2) rectangle (7,2.5);

	\node[shape=circle,scale=1.2,draw,fill=white] (Y2) at (-6,0){} ;
	\node[shape=circle,scale=1.2,draw,fill=white] (Z2) at (-3,0){} ;
	\node[shape=circle,scale=1.2,draw,fill=white] (A2) at (0,0){} ;
 	\node[shape=circle,scale=1.2,draw,fill=white] (B2) at (3,1){} ;
	\node[shape=circle,scale=1.2,draw,fill=white] (C2) at (3,-1) {};
	\node[shape=circle,scale=1.2,draw,fill=white] (D2) at (6,0.3){} ;
	\node[shape=circle,scale=1.2,draw,fill=white] (E2) at (6,1.7){} ;
	\node[shape=circle,scale=1.2,draw,fill=white] (F2) at (6,-1){} ;

	\node[shape=circle,scale=1.2,draw,fill=white] (G2) at (0,-1.2){} ;	
	\node[shape=circle,scale=1.2,draw,fill=white] (H2) at (0,1.2){} ;		
	
	\draw[thick] (A2) to (B2);		
	\draw[thick] (A2) to (C2);		
	\draw[thick] (B2) to (D2);		
	\draw[thick] (B2) to (E2);	
	\draw[thick] (C2) to (F2);	
	\draw[thick] (Z2) to (G2);	
	\draw[thick] (Z2) to (H2);	
	
	\draw[thick, dashed] (E2) to (7,2);			
	\draw[thick, dashed] (E2) to (7,1.5);	
	
	\draw[thick, dashed] (D2) to (7,0.3);	
	
	\draw[thick, dashed] (F2) to (7,-0.8);		
	\draw[thick, dashed] (F2) to (7,-1.2);		
	
	\draw[thick, dashed] (G2) to (1.5,-1.2);		
	\draw[thick, dashed] (H2) to (1.5,1.5);	
	\draw[thick, dashed] (H2) to (1.5,0.9);	
	
	\draw[thick, dashed] (Y2) to (-4.5,0.4);	
	\draw[thick, dashed] (Y2) to (-4.5,-0.4);	
	
	\draw[line width=2.5pt, red] (A2) to (Z2);		
	\draw[line width=2.5pt, red] (Y2) to (Z2);		
	\draw[line width=2.5pt, red, dashed] (Y2) to (-7,0);			
	
		\node[scale=0.9]  at (0,-2.4){$0$};
		\node[scale=0.9]  at (3,-2.4){$1$};
		\node[scale=0.9]  at (-3,-2.4){$-1$};
		\node[scale=0.9]  at (6,-2.4){$2$};
		\node[scale=0.9]  at (-6,-2.4){$-2$};	

		\node[scale=0.9]  at (0.3,-0.5){$o$};	
		\node[scale=1,red]  at (-4.1,0.4){$\xi$};	
				\node[scale=0.9]  at (3.05,1.55){$y$};	
						\node[scale=0.9]  at (0.05,1.7){$x$};	
								\node[scale=0.8]  at (-2.9,0.5){$x\wedge_{\xi}y$};		

\end{tikzpicture}
\end{center}
\caption{\label{fig:Horodistance}Visualization of the horodistance on a tree $(T,o)$ with ray $\xi \in \partial(T,o)$. In this example, we have that $\langle x\rangle_{(T,o)}=0$ and $\langle y-x\rangle_{(T,o)}=1$ holds.}
\end{figure}
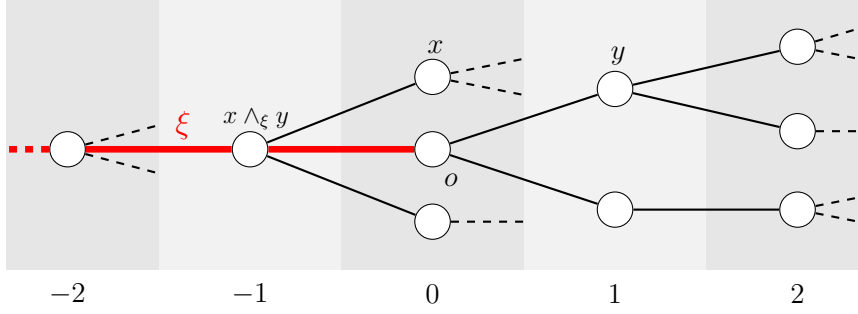
Our construction ensures that we have the same ray on all trees which one can get from a given tree by shifting its root.
For a configuration $(T,x,\zeta) \in \Omega^{\ast}$, we define the \textbf{local drift} at $x$ in the variable speed model to be
\begin{equation}
\psi^{\textup{v}}(T,x,\zeta) := \sum_{z \sim x} (1-\zeta(z)) \langle z-x\rangle_{(T,x)}  \ .
\end{equation} Similarly, the local drift at $x$ in the constant speed model is denoted by
\begin{equation}
\psi^{\textup{c}}(T,x,\zeta) := \sum_{z \sim x} \frac{1}{\deg(x)} (1-\zeta(z)) \langle z-x\rangle_{(T,x)} \ .
\end{equation}
Using these notions, we rewrite the position of the tagged particle as a martingale and a function depending only on the environment process in a ball of radius $1$ around its root. This follows the ideas of Proposition 4.1 in \cite[Chapter III]{L:Book2}.
 \begin{lemma}\label{lem:martingaleRelation} 
Fix $\rho \in (0,1)$ and $\alpha \in (0,\infty)$. Then the following two statements hold:
\begin{itemize}
\item[(i)\label{item:rel1}] Let $(T_t,o_t,\zeta_t)_{t \geq 0}$ be the environment process in the variable speed model with initial distribution $\Qu$ and natural filtration $(\mathcal{F}^{\textup{v}}_t)_{t \geq 0}$. We have that
\begin{equation}\label{eq:martingaleRelationvariable}
 \langle o_t\rangle_{(T_0,o_0)} = \int_{0}^{t} \psi^{\textup{v}}(T_s,o_s,\zeta_s) \dif s + M^{\textup{v}}_t
 \end{equation} holds for all $t \geq 0$, where $(M^{\textup{v}}_t)_{t \geq 0}$ is a martingale with respect to $(\mathcal{F}^{\textup{v}}_t)_{t \geq 0}$.
\item[(ii)\label{item:rel2}]  Let $(T_t,o_t,\zeta_t)_{t \geq 0}$ be the environment process in the constant speed model with initial distribution $\Qa$ and natural filtration $(\mathcal{F}^{\textup{c}}_t)_{t \geq 0}$. We have that
\begin{equation}\label{eq:martingaleRelationConstant}
 \langle o_t\rangle_{(T_0,o_0)} = \int_{0}^{t} \psi^{\textup{c}}(T_s,o_s,\zeta_s) \dif s + M^{\textup{c}}_t
 \end{equation} holds for all $t \geq 0$, where $(M^{\textup{c}}_t)_{t \geq 0}$ is a martingale with respect to $(\mathcal{F}^{\textup{c}}_t)_{t \geq 0}$.
\end{itemize}
 \end{lemma}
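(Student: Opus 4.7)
The plan is to reduce both statements to a single Dynkin-type computation, carried out on the original tree rather than on the environment-process state space. Fix a realisation of the initial tree and root $(T_0,o_0)=(T,o)$ and, conditionally on this, identify the environment process with the pair $(X_t,\eta_t)_{t\geq 0}$, where $(\eta_t)$ is the simple exclusion process on $(T,o)$ with the appropriate rates and starting distribution, and $X_t\in V(T)$ is the position of the tagged particle. The consistency property \eqref{eq:Horodistance} then gives $\langle o_t\rangle_{(T_0,o_0)} = \langle X_t\rangle_{(T,o)}$, so it is enough to decompose the right-hand side on the fixed tree.

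I would then apply Dynkin's formula to the function $g(x,\eta) := \langle x\rangle_{(T,o)}$, which depends only on the particle position. Under the Markov generator of $(X_t,\eta_t)$, only transitions in which the tagged particle itself moves contribute to $g$, since swaps among two non-tagged sites leave $g$ unchanged. A tagged-particle move from $x$ to a neighbour $z$ happens at rate $(1-\eta(z))$ in the variable speed model and at rate $(1-\eta(z))/\deg(x)$ in the constant speed model. Using \eqref{eq:Horodistance} in the form $\langle z-x\rangle_{(T,o)} = \langle z-x\rangle_{(T,x)}$, the generator applied to $g$ simplifies to $\psi^{\textup{v}}(T,x,\eta)$ and $\psi^{\textup{c}}(T,x,\eta)$ in the two models respectively. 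Provided the resulting local martingales are true martingales, Dynkin's formula yields
\begin{equation*}
\langle X_t\rangle_{(T,o)} - \int_0^t \psi^{\textup{v}}(T,X_s,\eta_s)\,\dif s \quad \text{and} \quad \langle X_t\rangle_{(T,o)} - \int_0^t \psi^{\textup{c}}(T,X_s,\eta_s)\,\dif s
\end{equation*}
as martingales, and together with $\langle X_0\rangle_{(T,o)}=0$ and the identification above this is exactly \eqref{eq:martingaleRelationvariable} and \eqref{eq:martingaleRelationConstant}.

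The only real obstacle will be upgrading the local martingale to a genuine martingale, since $g$ is unbounded. I would need $\E|X_t|<\infty$ and $\E\int_0^t|\psi|\,\dif s<\infty$ under $\Qu$, respectively $\Qa$. For the constant speed model this is almost immediate, as the number of jumps of $X_t$ in $[0,t]$ is dominated by a Poisson$(t)$ variable. For the variable speed model the jump rate of $X_t$ at $x$ is bounded by $\deg(x)$, while under stationarity $\E[\deg(o_s)] = \E_{\UGW}[\deg(o)] = (\E[1/(Z+1)])^{-1}$ is finite by \eqref{eq:ExpectationFinite}; a Gronwall-type estimate combined with localisation through the stopping times $\tau_N := \inf\{t\geq 0 : |X_t|\geq N\}$ then delivers the required integrability and closes the argument.
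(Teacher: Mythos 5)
Your proposal is correct and follows essentially the same route as the paper: condition on the underlying tree, apply Dynkin's formula to the horodistance function $g(T,x,\zeta)=\langle x\rangle_{(T,o)}$, and observe that only moves of the tagged particle contribute, so that $Lg=\psi$ in each model. Your explicit treatment of the integrability needed to pass from a local martingale to a true martingale (via the Poisson domination in the constant speed model and the finiteness of $\E_{\UGW}[\deg(o)]=(\E[1/(Z+1)])^{-1}$ in the variable speed model) is a point the paper leaves implicit, and it is handled correctly.
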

 \begin{proof} We only show part \hyperref[item:rel1]{(i)} of Lemma \ref{lem:martingaleRelation} as for part \hyperref[item:rel2]{(ii)} the same arguments apply. For a given tree $(T,o) \in \mathcal{T}$, we define 
 \begin{equation*}
 \QuT:= \Qu\left( (T^{\prime},o^{\prime}, . ) \in \cdot \  | (T^{\prime},o^{\prime})= (T,o)\right) \ .
\end{equation*} Note that we can write
 \begin{align}\label{eq:QuDecomposition} \Qu(A) &= \int_{\Omega^{\ast}} \mathds{1}_{\left\{ (T,o,\zeta) \in A \right\}} \dif \Qu(T,o,\zeta) \nonumber \\
 &= \int_{\mathcal{T}} \int_{\{ 0,1\}^{V(T)}}\mathds{1}_{\left\{ (T,o,\zeta) \in A \right\}} \dif \pi_{\rho,T}^{\ast}(\zeta) \dif \UGW(T,o) \\
 &= \int_{\mathcal{T}} \int_{\tilde{\Omega}^{\ast}_T}\mathds{1}_{\left\{ (T^{\prime},o^{\prime},\zeta) \in A \right\}} \dif \QuT(T^{\prime},o^{\prime},\zeta) \dif \UGW(T,o) \nonumber
\end{align} for all $\Qu$-measurable sets $A$. Thus, it suffices to show that 
\begin{align*}
 E_{\mathbb{Q}^{\text{\normalfont v}}_{\rho,T}}\left[\langle o_t \rangle_{(T,o)} - \langle o_s \rangle_{(T,o)} - \int_{s}^{t} \psi^{\textup{v}}(T_r,o_r,\zeta_r) \dif r \Big| \mathcal{F}^{\textup{v}}_s\right] = 0
\end{align*}
holds for all $t > s \geq 0$ and $\UGW$-almost every tree $(T,o) \in \mathcal{T}$, where $E_{\QuT}$ denotes the expectation with respect to the environment process started from $\QuT$. 
Recall that the choice of the ray for a tree in $\mathcal{T}$ is consistent under performing shifts of the root. Moreover, note that an environment process started from $\QuT$ remains in $\tilde{\Omega}^{\ast}_T$ almost surely. Using the Markov property of the environment process as well as the fact that $\Qu$ is stationary for the environment process by Proposition \ref{pro:invariance}, we see that for all $s \geq 0$,
\begin{align*}
 E_{\QuT}&\left[\langle o_t \rangle_{(T,o)}-\langle o_s \rangle_{(T,o)} - \int_{s}^t \psi^{\textup{v}}(T_r,o_r,\zeta_r) \dif r \Big|  \mathcal{F}^\textup{v}_s \right]  = \\
E_{\mathbb{Q}^{\text{\normalfont v}}_{\rho,T_s}}&\left[\langle o_{t-s} \rangle_{(T,o)}-\langle o_0 \rangle_{(T,o)} - \int_{0}^{t-s} \psi^{\textup{v}}(T_r,o_r,\zeta_r) \dif r \right] \, .
\end{align*}
Hence, it suffices to show that for $\UGW$-almost every $(T,o) \in \mathcal{T}$, we have that
\begin{equation}\label{eq:dynkinargument}
E_{\QuT}\left[\langle o_{t} \rangle_{(T,o)}-\langle o_0 \rangle_{(T,o)}\right] - \int_{0}^{t} E_{\QuT}\left[\psi^{\textup{v}}(T_s,o_s,\zeta_s)\right] \dif s = 0
\end{equation}
is satisfied for all $t \geq 0$. For a tree $(T,o) \in \mathcal{T}$, let $g$ be the function on $\tilde{\Omega}^{\ast}_T$ given by
\begin{equation*}
g(T,x,\zeta):= \langle x\rangle_{(T,o)}
\end{equation*} for all $(T,x,\zeta) \in \tilde{\Omega}^{\ast}_T$. 
Plugging $g$ into the generator in \eqref{def:generatorEnvironment}, we note that
 \begin{equation*}
L^{\textup{v}}g(T,x,\zeta) = \sum_{y \sim x}  (1- \zeta(y))\left[ g(T,y,\zeta^{x,y})- g(T,x,\zeta)\right] = \psi^{\textup{v}}(T,x,\zeta)
\end{equation*} holds for all $(T,x,\zeta) \in \tilde{\Omega}^{\ast}_T$. For the second equality, we use \eqref{eq:Horodistance} together with the relation
\begin{equation*}
\langle y-x \rangle_{(T,x)}=\langle y-x \rangle_{(T,o)} = \langle y \rangle_{(T,o)} - \langle x \rangle_{(T,o)}
\end{equation*} for all $x,y \in V(T)$ which follows from the construction of the horodistance, see Figure \ref{fig:Horodistance}. Thus, we obtain \eqref{eq:dynkinargument} by applying Dynkin's formula.
 \end{proof} 
 
\begin{proof}[Proof of Proposition \ref{pro:transience}] We will only prove transience for the tagged particle in the variable speed model of the simple exclusion process. For the tagged particle in the constant speed model of the simple exclusion process, similar arguments apply. We will show that the tagged particle has $P_{\Pu}$-almost surely a strictly positive speed with respect to the horodistance, which implies transience. Observe that the martingale $(M^{\textup{v}}_t)_{t \geq 0}$ defined via the relation \eqref{eq:martingaleRelationvariable} has stationary increments by Proposition \ref{pro:invariance} and thus satisfies a law of large numbers. Using that $\Qu$ is stationary for the environment process, we obtain by Lemma \ref{lem:martingaleRelation} that
\begin{equation*}
E_{\Qu}\left[ \lim_{t \rightarrow \infty}\frac{\langle o_t\rangle_{(T_0,o_0)}}{t} \right] =  E_{\Qu}\left[\psi^{\textup{v}}(T_0,o_0,\zeta_0)\right]
\end{equation*} holds. Moreover, from the construction of the horodistance, we see that
\begin{equation*}
\sum_{x \sim o_0} E_{\Qu}\left[\langle x \rangle_{(T_0,o_0)} | \deg(o_0)=k \right] = k-2 
\end{equation*} is satisfied for all $k\geq 2$. Thus, combining these two observations yields that
\begin{align}\label{eq:speedvariableexpected}
E_{\Qu}\left[ \lim_{t \rightarrow \infty}\frac{\langle o_t\rangle_{(T_0,o_0)}}{t} \right]  &= (1-\rho)\int_{\mathcal{T}}(\deg(o)-2) \dif \UGW(T,o) \nonumber \\
&= (1-\rho) \E\left[ \frac{Z-1}{Z+1}\right]\left(\E\left[ \frac{1}{Z+1}\right]\right)^{-1}  
\end{align} holds, where $Z$ is distributed according to the offspring distribution $\mu$. Since we have a supercritical, augmented Galton--Watson tree without leaves and $(\langle o_t\rangle_{(T_0,o_0)})_{t \geq 0}$ describes the horodistance of the tagged particle from the root within the environment process, we see that with positive $\Qu$-probability, the tagged particle has a strictly positive speed. \\
In order to show that the tagged particle is transient with respect to the initial distribution $\Pu$, suppose that there exists an initial set of $0/1$-colored trees $B \subseteq \Omega^{\ast}$ with $\Qu(B)>0$ for which the tagged particle has speed zero. Note that $B$ can be chosen such that it forms an invariant set for the environment process. Let $E_{\Qu( \cdot | B)}$ denote the expectation of the environment process started from $\Qu(\cdot | B)$. Using the arguments of the proof of Lemma \ref{lem:martingaleRelation}, the environment process must satisfy
\begin{align*}
0 &= \int_0^t E_{\Qu( \cdot| B)}\left[\psi^{\textup{v}}(T_s,o_s,\zeta_s)\right] \dif s \\
&=  \int_0^t \sum_{k \geq 2} \sum_{x \sim o_s} E_{\Qu( \cdot | B)}\left[\langle x \rangle_{(T_s,o_s)} | \deg(o_s)=k \right] \Qu(\deg(o_s)=k, \zeta_s(x)=0 | B)   \dif s  
\end{align*}  for all $t\geq 0$.  Again, from the construction of the horodistance, we see that
\begin{equation*}
\sum_{x \sim o_s} E_{\Qu( \cdot | B)}\left[\langle x \rangle_{(T_s,o_s)} | \deg(o_s)=k \right] = k-2 
\end{equation*} holds for all $k\geq 2$. Moreover, note that the conditional probability
\begin{equation}\label{eq:RestrictedSetTransience}
\Qu(\deg(o_s)=k, \zeta_s(x)=0 | B)  
\end{equation} does not depend on the particular choice of $x \sim o_s$. Hence, we see that
\begin{equation*}
0 =\int_0^t \sum_{k \geq 2} (k-2) \Qu(\deg(o_s)=k, \zeta_s(x)=0 | B)   \dif s 
\end{equation*}
must hold for all $t \geq 0$ and $x \sim o_s$. However, this gives a contradiction as the term in \eqref{eq:RestrictedSetTransience} is non-negative for all $k\geq 2$ and strictly positive for at least one $k\geq 3$. Otherwise, the underlying augmented Galton--Watson tree would almost surely be restricted to a copy of $\mathbb{Z}$. Thus, the tagged particle has a strictly positive speed $\Qu$-almost surely. We conclude since $\Pu$ and $\Qu$ are equivalent for all $\rho \in (0,1)$.
\end{proof}
\begin{corollary}\label{cor:twoEnds} Choose an initial configuration $(T,o,\zeta)\in \Omega^{\ast}$ according to $\Qu$, respectively according to $\Qa$. Consider the tagged particles within two independently sampled environment processes $(T_t,o_t,\zeta_t)_{t \geq 0}$ and $(T^{\prime}_t,o^{\prime}_t,\zeta^{\prime}_t)_{t \geq 0}$, which are both started from $(T,o,\zeta)$. Then the trajectories of the respective tagged particles converge almost surely 
 to two distinct rays $\xi,\xi^{\prime} \in \partial(T,o)$.
\end{corollary}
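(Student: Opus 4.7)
The proof will split into two stages: convergence to a ray, and distinctness of the two rays.

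For the first stage, by Proposition \ref{pro:transience} each tagged particle visits the root only finitely often. I would first upgrade this to transience at every vertex: if some $v \neq o$ were visited infinitely often, then (since the subtree rooted at $v$ is accessible from outside only via the edge $\{v, \mathrm{parent}(v)\}$) this edge would be crossed infinitely often, forcing $\mathrm{parent}(v)$ to be visited infinitely often; induction up the tree would contradict transience at $o$. It follows that each trajectory eventually leaves every finite subtree. On a locally finite tree this is equivalent to the existence of a unique nested sequence of subtrees rooted at vertices $v_0 = o, v_1, v_2, \ldots$ with $v_{n+1}$ a child of $v_n$, such that the trajectory is eventually inside the subtree at $v_n$ for each $n$. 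These $v_n$ form a ray $\xi \in \partial(T,o)$ to which the trajectory converges.

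For the second stage, conditional on the initial configuration $(T,o,\zeta)$ the two processes are independent by construction, so the limit rays $\xi,\xi^{\prime}$ are conditionally i.i.d.\ with a common distribution $\mu$ on $\partial(T,o)$. Non-atomicity of $\mu$ then implies
\begin{equation*}
\P(\xi = \xi^{\prime} \mid (T,o,\zeta)) = \sum_{\eta \in \partial(T,o)} \mu(\{\eta\})^2 = 0,
\end{equation*}
which gives the claim. To establish non-atomicity, I would show that the event $\{\xi = \xi_0\}$ for any candidate ray $\xi_0 = (o,v_1,v_2,\ldots)$ forces, for every $n$, the tagged particle on its last visit to $v_n$ to exit into the subtree rooted at $v_{n+1}$ rather than any other subtree accessible from $v_n$. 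Since the offspring distribution satisfies $\mathfrak{m} > 1$ with $p_0 = 0$, almost surely infinitely many $v_n$ along $\xi_0$ have $\deg(v_n) \geq 3$, yielding a genuine branching choice; the strong Markov property combined with the positivity of the exclusion rates should give a uniform bound $q < 1$ on the probability of the ``correct'' commitment at each such vertex, whose infinite product vanishes.

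The main obstacle is making the bound $q<1$ rigorous: the exclusion constraint introduces correlations between dynamics on different subtrees, and the configuration at the stopping time of the last visit to $v_n$ is not a priori Bernoulli on the relevant subtrees. A comparison with a simple random walk on the tree (whose subtree commitment probabilities are computable via effective conductances), or a direct coupling argument exploiting the positivity of the exclusion rates out of each vacant neighbor of $v_n$, should allow one to decouple the commitment probability from the past history in a controlled way and close the argument.
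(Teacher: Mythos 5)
Your argument has two genuine gaps. First, in your convergence stage, the induction step is false: if a vertex $v\neq o$ is visited infinitely often, it does \emph{not} follow that the edge $\{v,\mathrm{parent}(v)\}$ is crossed infinitely often --- the particle could enter the subtree rooted at $v$ once and oscillate inside it forever, so transience at the root alone does not give transience at every vertex. What actually yields convergence to a ray is the stronger conclusion hidden in the proof of Proposition \ref{pro:transience}: the tagged particle has $\Qu$-a.s.\ strictly positive speed with respect to the horodistance, hence $\abs{o_t}\rightarrow\infty$, hence the trajectory eventually commits to a unique end. This gap is minor and fixable. The second gap is the serious one: your distinctness argument reduces everything to non-atomicity of the conditional law of the limit ray on $\partial(T,o)$, and you concede yourself that the key estimate (a uniform bound $q<1$ on the commitment probability at branching vertices, decoupled from the exclusion-induced correlations and from the non-product configuration at the stopping times) is not established. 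That is precisely the crux, so as written the proof is incomplete.

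The paper avoids non-atomicity altogether, and its route is worth internalizing. The positive-speed statement in the proof of Proposition \ref{pro:transience} holds for \emph{any} consistent choice of ray fixed in advance, not just one particular choice. Since the two environment processes are independent, one first samples $(T_t,o_t,\zeta_t)_{t\geq 0}$, whose tagged particle converges to some ray $\xi\in\partial(T,o)$; conditionally on this, $\xi$ is a fixed admissible ray, and one defines the horodistance for the second process with respect to $\xi$. The positive-speed result then gives $\langle o^{\prime}_t\rangle_{\xi}\rightarrow+\infty$ almost surely, whereas convergence of the second tagged particle to $\xi$ itself would force $\langle o^{\prime}_t\rangle_{\xi}\rightarrow-\infty$. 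Hence $\xi^{\prime}\neq\xi$ almost surely, with no need to control atoms of the exit law. If you want to salvage your approach, you would still need to prove the non-atomicity estimate, which is essentially a harder statement than the corollary itself.
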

\begin{proof} By Proposition \ref{pro:transience}, the tagged particles in both environment processes are almost surely transient, and hence their trajectories converge almost surely to unique rays $\xi,\xi^{\prime} \in \partial (T,o)$, respectively. It remains to show that $\xi \neq \xi^{\prime}$ holds almost surely. In the proof of Proposition \ref{pro:transience}, we only require the ray of a tree to be consistent under performing shifts of the root and to be fixed at the beginning. 
Since $(T_t,o_t,\zeta_t)_{t \geq 0}$ and $(T^{\prime}_t,o^{\prime}_t,\zeta^{\prime}_t)_{t \geq 0}$ are evaluated independently, we can first sample $(T_t,o_t,\zeta_t)_{t \geq 0}$ in which the tagged particle almost surely converges to some ray $\xi \in \partial (T,o)$. For the process $(T^{\prime}_t,o^{\prime}_t,\zeta^{\prime}_t)_{t \geq 0}$, we then define the horodistance with respect to this ray $\xi$. Since almost surely
\begin{equation*}
\lim_{t \rightarrow \infty} \langle o_t\rangle_{(T_0,o_0)} = \infty
\end{equation*} holds and the tagged particle in $(T^{\prime}_t,o^{\prime}_t,\zeta^{\prime}_t)_{t \geq 0}$ converges almost surely to some ray $\xi^{\prime}\in \partial (T,o)$, the two rays $\xi$ and $\xi^{\prime}$ can almost surely not be the same.
\end{proof}

\begin{remark}\label{rem:speed} 
In \eqref{eq:speedvariableexpected}, we saw that the averaged speed of the tagged particle in the environment process in the variable speed model is given by
\begin{equation}\label{eq:speedvariable}
E_{\Qu}\left[ \lim_{t \rightarrow \infty} \frac{\langle o_t\rangle_{(T_0,o_0)}}{t} \right] = (1-\rho) \E\left[ \frac{Z-1}{Z+1}\right] \left(\E\left[ \frac{1}{Z+1}\right]\right)^{-1} 
\end{equation} for $Z \sim \mu$ and $\rho \in (0,1)$. Similarly, one derives that the averaged speed of the tagged particle in the environment process in the constant speed model is given by
 \begin{equation} \label{eq:speedConstant}
E_{\Qa}\left[ \lim_{t \rightarrow \infty} \frac{\langle o_t\rangle_{(T_0,o_0)}}{t} \right] = \E\left[ \frac{Z-1}{Z+1}\frac{1}{\alpha(Z+1)+1}\right] 
\end{equation} for $Z \sim \mu$ and $\alpha \in (0,\infty)$. We will show in Section \ref{sec:speed} that  \eqref{eq:speedvariable} and \eqref{eq:speedConstant} give the speed of the tagged particle $P_{\Pu}$-almost surely, respectively $P_{\Pa}$-almost surely, using an ergodicity argument for the environment process. 
\end{remark}
\section{Ergodicity for the environment process}\label{sec:ergodicity}
In this section, we show that the environment process started from $\Qu$ in the variable speed model and from $\Qa$ in the constant speed model, respectively, is ergodic for all $\rho \in (0,1)$ and $\alpha \in (0,\infty)$. The proof will have two main ingredients. 
First, we show that every invariant set $A$ can be represented by a set of trees, which we obtain by dropping the $0/1$-coloring in every configuration of $A$. This follows the arguments of Saada for the exclusion process on $\mathbb{Z}^{d}$ with drift, see \cite{S:TaggedParticleZd}. We then deduce ergodicity for the environment process using regeneration points. This follows the ideas of Lyons and Peres in \cite[Chapter 17]{LP:ProbOnTrees} for the simple random walk on Galton--Watson trees. 

\begin{proposition}\label{pro:ergodicity}
Fix parameters $\rho \in (0,1)$ and $\alpha \in (0,\infty)$ for the measures $\Qu$ and $\Qa$, respectively. The following two statements hold.
\begin{itemize}
\item[(i) \label{item:pro3}] The measure $\Qu$ is ergodic for the environment process generated by $L^{\textup{v}}$. 
\item[(ii)\label{item:pro4}] The measure $\Qa$ is ergodic for the environment process generated by $L^{\textup{c}}$. 
\end{itemize}
\end{proposition}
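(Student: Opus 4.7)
The plan is to establish ergodicity in two stages, following the outline given just before the proposition. I will describe the argument for $\Qu$ in the variable speed model; the constant speed case follows by the same template with $\Qa$ and the appropriate product measure.

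First, I would show that every event $A \subseteq \Omega^{\ast}$ that is invariant under the environment semigroup depends, up to a $\Qu$-null set, only on the underlying rooted tree $(T,o)$ and not on the coloring $\zeta$. Following Saada's coupling argument from \cite{S:TaggedParticleZd}, I would realize the exclusion process through a graphical construction with Poisson clocks on the edges of the ambient tree and couple two copies of the process started from configurations $(T,o,\zeta)$ and $(T,o,\zeta')$ that differ at only finitely many sites by running them with the same clocks. Transience of the tagged particle (Proposition \ref{pro:transience}) forces the moving root to escape every finite ball almost surely, and a standard stirring argument shows that the set of discrepancies can be transported behind the two tagged particles and becomes irrelevant in the tail of the coupled environment processes. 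Hence $\mathds{1}_A$ is almost surely insensitive to finite-range perturbations of $\zeta$, and a Kolmogorov zero-one law applied to the product measure $\pi^{\ast}_{\rho,T}$, conditionally on $(T,o)$, forces $\mathds{1}_A$ to be a function of $(T,o)$ alone. This produces a Borel set $\tilde{A} \subseteq \mathcal{T}$ with $\mathds{1}_A = \mathds{1}_{\tilde{A}}$ up to $\Qu$-null sets.

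Second, I would show that $\tilde{A}$ is $\UGW$-trivial by exploiting regeneration levels along the tagged particle's trajectory, in the spirit of Lyons and Peres \cite[Chapter 17]{LP:ProbOnTrees}. By Corollary \ref{cor:twoEnds} together with the positive speed established in the proof of Proposition \ref{pro:transience}, the tagged particle converges to a unique ray $\xi \in \partial(T,o)$ and crosses each horolevel along $\xi$ for the last time at an almost surely finite epoch; call such a level a regeneration level. A standard argument gives infinitely many regeneration levels, and under $\Qu$ the subtrees hanging off $\xi$ beyond consecutive regeneration levels are i.i.d.\ $\GW$-distributed and carry independent Bernoulli-$\rho$ colorings drawn from $\pi^{\ast}_{\rho,T}$. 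Since $\tilde{A}$, viewed along the environment shift, lies in the tail $\sigma$-algebra of this i.i.d.\ sequence of decorated subtrees, Kolmogorov's zero-one law yields $\UGW(\tilde{A}) \in \{0,1\}$, and so $\Qu(A) \in \{0,1\}$. The equivalence $\Qu \sim \Pu$ then transfers the conclusion back to the measure of primary interest.

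The hard part will be the first stage: on an augmented Galton--Watson tree the vertex degrees are unbounded, and under the environment shift the two coupled moving roots can separate before the discrepancies are cleared. Saada's arguments on $\mathbb{Z}^d$ rely on translation invariance, which is not available here. The proper viewpoint is to run the coupling at the level of the exclusion processes on the common ambient tree and only afterwards read off the two environment processes from the positions of the respective tagged particles, using the invariance of $\Qu$ (Proposition \ref{pro:invariance}) to handle the shift of the root. The second stage is comparatively routine once one verifies that regeneration levels occur with positive density, which follows from positive speed and the reversibility established in Proposition \ref{pro:invariance}.
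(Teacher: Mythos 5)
Your two-stage architecture matches the paper's, and your second stage (regeneration points along the limiting ray, i.i.d.\ slabs, a zero-one law in the tree component) is essentially the argument the paper carries out via Lemmas \ref{lem:RegenerationPoints}--\ref{lem:mixingTreeComponent}. The problem is your first stage. The ``stirring'' coupling you invoke does not do what you need: in a basic coupling of two exclusion processes on the same tree driven by the same clocks, discrepancies are conserved quantities --- they evolve as second-class particles and can only disappear by annihilating in pairs of opposite sign. Two colorings $\zeta,\zeta'$ that differ on a finite set need not even carry the same number of particles there, so the discrepancy set never empties; and the moment a discrepancy sits next to the tagged particle, the two tagged trajectories can split, after which the two environment processes are rooted at different vertices and the comparison you want (that $\mathds{1}_A$ takes the same value on both) has no coupling left to support it. Consequently the claimed tail-measurability of $\mathds{1}_A$ in $\zeta$, and hence the conditional Kolmogorov zero-one law, is not established. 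Note also that Saada's paper does not argue this way: her Lemma 4 is a deterministic rearrangement lemma, not a stirring coupling.

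What the paper does instead (Lemmas \ref{lem:ergodicityFixedTree} and \ref{lem:ergodicity}) is to import the extremality of $\pi_{\rho,T}$ among invariant measures of the exclusion process on the fixed tree $(T,o)$ (Theorem 2.1 of \cite{J:Extremal}), which gives ergodicity of the exclusion process, i.e.\ triviality of sets invariant for the dynamics \emph{without} the moving root. The real work is the bridge from environment-process invariance to exclusion-process invariance: assuming $0<\QuT(A)<1$, one projects $A$ and its complement $B$ to sets $\tilde A,\tilde B$ of colorings of the fixed tree, shows both have full $\PuT$-measure by that ergodicity, and then derives a contradiction by exhibiting two explicit finite sequences of admissible nearest neighbor exclusion moves (transformations (a) and (b) around the configuration of Lemma \ref{lem:ergodicity}) that carry the same configuration $\eta$ to $\eta^{w,z}$ while forcing $(T,y,\tilde\eta^{w,z})$ into $A$ along one route and into $B$ along the other. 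This combinatorial step is the crux of part (i) and is absent from your proposal. A smaller issue of the same flavour occurs in your second stage: for the exclusion environment the existence of regeneration points is not ``standard'' as it is for the random walk, because the tagged particle is not Markovian; the paper needs the fresh/exit point bookkeeping plus another explicit particle-rearrangement construction (Lemmas \ref{lem:Hset} and \ref{lem:ShiftingParticlesMixing}) to show that the regeneration event has positive probability.
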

We will only show part \hyperref[item:pro3]{(i)} of Proposition \ref{pro:ergodicity}, i.e.\@ we will prove that $\Qu(A) \in \{ 0,1 \}$ holds for any set $A$ which is invariant under the environment process in the variable speed model.  For part \hyperref[item:pro4]{(ii)} of Proposition \ref{pro:ergodicity} the same arguments apply. The following lemma says that in order to determine if $(T,o,\zeta) \in A$ holds, it suffices $\Qu$-almost surely to know the underlying tree $(T,o) \in \mathcal{T}$.
\begin{lemma}\label{lem:ergodicityFixedTree} Let $A \subseteq \Omega^{\ast}$ be an invariant set for the environment process started from $\Qu$. Then for $\UGW$-almost every tree $(T,o) \in \mathcal{T}$, we have that
\begin{equation}
\int_{\tilde{\Omega}_T^{\ast}}\mathds{1}_{\left\{ (T^{\prime},o^{\prime},\zeta) \in A \right\}} \dif \QuT\left( T^{\prime},o^{\prime},\zeta\right)\in \{0,1\} 
\end{equation} holds. Moreover, we can find a Borel set of rooted trees $U \subseteq \mathcal{T}$ which is invariant under the environment process such that 
\begin{equation}
\int_{\tilde{\Omega}_T^{\ast}}\mathds{1}_{\left\{ (T^{\prime},o^{\prime},\zeta) \in A \right\}} \dif \QuT\left( T^{\prime},o^{\prime},\zeta\right) = \mathds{1}_{ \{ (T,o) \in U \} } 
\end{equation} is satisfied.
\end{lemma}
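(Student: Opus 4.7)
The natural strategy is to exploit the decomposition $L^{\textup{v}} = L^{\textup{v}}_{\textup{ex}} + L^{\textup{v}}_{\textup{sh}}$ of the generator into the exchange part, which leaves the underlying rooted tree untouched, and the shift part, which moves the root. The first step is to show that any set $A$ which is invariant under the full environment process is automatically invariant under the exchange dynamics alone. A clean way to see this uses the reversibility established in Proposition~\ref{pro:invariance}: to $L^{\textup{v}}$ one associates the nonnegative Dirichlet form $\mathcal{E}(f,f) = -\int f\, L^{\textup{v}} f \dif \Qu$, which splits additively as $\mathcal{E}_{\textup{ex}} + \mathcal{E}_{\textup{sh}}$. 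Invariance of $A$ forces $\mathcal{E}(\mathds{1}_A,\mathds{1}_A) = 0$ (after the usual cylinder-function approximation), and nonnegativity of both pieces then gives $\mathcal{E}_{\textup{ex}}(\mathds{1}_A,\mathds{1}_A) = 0$; equivalently, $\mathds{1}_A$ is $\Qu$-almost surely invariant under the exchange dynamics. Alternatively, the coupling argument of Saada \cite{S:TaggedParticleZd}, which the introduction advertises, yields the same conclusion.

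\textbf{Conditioning on the tree and slice ergodicity.} Using the decomposition \eqref{eq:QuDecomposition} I would next work conditional on the underlying tree $(T,o)$. The exchange dynamics preserves the root position $o' \in V(T)$, and conditional on $o' = x$ the coloring off $x$ is a Bernoulli-$\rho$ product measure. By the classical extremality of Bernoulli product measures among the invariant measures of the symmetric simple exclusion process (see \cite[Chapter VIII]{L:interacting-particle}), this Bernoulli measure is ergodic for the exchange dynamics with the particle at $x$ held fixed. Therefore, for $\UGW$-almost every $(T,o)$ and every $x \in V(T)$, the indicator $\mathds{1}_A$ is $\QuT$-almost surely constant on the slice $\{(T,x,\zeta) \in \tilde{\Omega}^{\ast}_T\}$; equivalently, $\mathds{1}_A(T,x,\zeta)$ depends $\QuT$-a.s.\@ only on the pair $(T,x)$.

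\textbf{Propagation across root positions.} To eliminate dependence on the current root $x$, I would invoke invariance of $A$ under $L^{\textup{v}}_{\textup{sh}}$: the transition $(T,x,\zeta) \to (T,z,\zeta^{x,z})$ has rate $(1-\zeta(z))$, which under $\QuT$ is strictly positive with probability $1-\rho > 0$. Combined with the slice step, invariance forces the map $x \mapsto \mathds{1}_A(T,x,\cdot)$ to agree on every pair of neighbors $x\sim z$, and by connectivity of $T$ it is then constant on all of $V(T)$. Setting
\[ U := \bigl\{(T,o) \in \mathcal{T} \colon \QuT(A) > 0 \bigr\}, \]
this yields $\mathds{1}_A(T,o,\zeta) = \mathds{1}_{\{(T,o) \in U\}}$ $\Qu$-almost surely, which delivers both displayed identities in the statement. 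The fact that $U$ is invariant under the environment process is precisely the content of the propagation step, namely that membership in $U$ is unchanged when the root is moved along an edge of $T$.

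\textbf{Main obstacle.} The technical heart is the slice-ergodicity step: one has to justify that the Bernoulli product measure is extremal among invariant measures of the symmetric simple exclusion process on $\UGW$-almost every infinite tree, even with one vertex held permanently occupied. Tagging only modifies the usual statement superficially, but Galton--Watson trees are generically non-amenable and the underlying random walk is transient (Proposition~\ref{pro:transience}), so the right extremality result must be applied carefully in this random environment. A secondary nuisance, which has to be tracked throughout, is that all measures live on isomorphism classes of trees rather than labeled trees, so both the disintegration along $\UGW$ and the connectivity argument in the propagation step must be phrased unambiguously within that framework.
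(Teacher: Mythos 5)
Your proposal follows a genuinely different route from the paper, and it contains a real gap at its central step. The paper never works with the exchange dynamics on the punctured tree: it projects the invariant set $A$ to a set $\tilde{A}\subseteq\Omega_T$ of colorings of the \emph{full} tree $(T,o)$ (taking the union over all possible root positions), observes that $\tilde{A}$ is invariant for the ordinary exclusion process on the connected tree $T$, and applies the extremality of $\pi_{\rho,T}$ (Theorem 2.1 of \cite{J:Extremal}) only to that process. Since the projections $\tilde{A}$ and $\tilde{B}$ of $A$ and its complement can both have full $\PuT$-measure, the paper then needs a second, combinatorial ingredient: Lemma \ref{lem:ergodicity} produces sites $v,w,x,y,z$ and two explicit finite particle rearrangements (the transformations (a) and (b) in Figure \ref{figureErgodicity}) which turn a single coloring $\eta$ with $(T,y,\eta)\in A$ and $(T,z,\eta)\in B$ into one and the same configuration lying in both $A$ and $B$ --- a contradiction. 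Your Dirichlet-form decomposition $\mathcal{E}=\mathcal{E}_{\textup{ex}}+\mathcal{E}_{\textup{sh}}$ and the propagation step along occupied--vacant edges are a clean substitute for the paper's projection argument and for the role played by the rearrangements, and the propagation step itself is sound (the swap $\zeta\mapsto\zeta^{x,z}$ maps the conditional Bernoulli measure on the slice at $x$ restricted to $\{\zeta(z)=0\}$ absolutely continuously onto the one at $z$, and $\{\zeta(z)=0\}$ has probability $1-\rho>0$).

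The gap is the slice-ergodicity step, which you correctly flag as the technical heart but do not prove, and which does \emph{not} follow from the extremality result the paper relies on. Freezing the tagged particle at $x$ turns the exchange dynamics into the exclusion process on the forest $T-x$, which has $\deg(x)\ge 2$ infinite components; the dynamics factorizes over these components, and a product of ergodic systems need not be ergodic. So ergodicity of $\pi_{\rho,T}$ for the exclusion process on the connected tree $T$ (which is what \cite{J:Extremal} delivers) does not yield ergodicity of the Bernoulli product measure for the exchange dynamics on the slice. To close this you would need a strictly stronger statement --- e.g.\@ mixing of the exclusion process on each component, which one could try to extract from self-duality together with transience of the dual finite exclusion walks on the components of $T-x$ --- and you would have to check this for $\UGW$-almost every tree and every vertex $x$, including components in which the deleted vertex creates a leaf. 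Without that argument the chain ``$\mathcal{E}_{\textup{ex}}(\mathds{1}_A,\mathds{1}_A)=0$ $\Rightarrow$ $\mathds{1}_A(T,x,\cdot)$ is a.s.\@ constant on each slice'' is unjustified: vanishing of the exchange Dirichlet form only says that $\mathds{1}_A$ is a.s.\@ invariant under each single allowed exchange, i.e.\@ it is an invariant function for the slice dynamics, and the whole point is to show that such functions are constant. The paper's rearrangement argument is designed precisely to avoid ever needing ergodicity on the disconnected punctured tree.
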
 In order to show Lemma \ref{lem:ergodicityFixedTree}, we follow the arguments of Saada in \cite{S:TaggedParticleZd}. A similar approach can be found in \cite{CCGS:SpeedTree} for the simple exclusion process on regular trees. A key tool in \cite{S:TaggedParticleZd} for showing ergodicity of the environment process of the simple exclusion process on $\mathbb{Z}^d$ with drift, is to use that the Bernoulli product measures are extremal invariant for the simple exclusion process on $\mathbb{Z}^d$ with drift.  \\
Similarly, our arguments are based on the fact that we have ergodicity for the simple exclusion process started from $\pi_{\rho,T}$ for $\AGW$-almost every initial tree $(T,o) \in \mathcal{T}$. More precisely, by Theorem 2.1 of \cite{J:Extremal}, we have that the measures $\pi_{\rho,T}$ are extremal invariant for the simple exclusion on $\{ 0,1\}^{V(T)}$ for $\AGW$-almost every tree $(T,o) \in \mathcal{T}$, for all $\rho \in (0,1)$. Since, for  $\AGW$-almost all trees,  the simple exclusion process on a given tree is a Markov process, this implies that the measures $\pi_{\rho,T}$ are ergodic for the simple exclusion process on a given tree $(T,o)$, see Theorem B52 of \cite{L:Book2}. \\
In order to show that the environment process on $\tilde{\Omega}_T^{\ast}$ with initial law $\QuT$ is ergodic for $\UGW$-almost every tree $(T,o) \in \mathcal{T}$, we proceed with a proof by contradiction. Suppose that for some $\rho \in (0,1)$, we have that
\begin{equation*}
0 < \QuT\left(  A  \right) < 1
\end{equation*} holds. Since the set $A$ is invariant for the environment process with starting distribution $\Qu$, it has to be invariant for the environment process on $\tilde{\Omega}_T^{\ast}$ with initial law $\QuT$ for $\UGW$-almost every tree $(T,o) \in \mathcal{T}$. Define $B := \tilde{\Omega}_T^{\ast} \setminus (A \cap \tilde{\Omega}_T^{\ast})$ and note that $B$ is a non-trivial, invariant set for the environment process started from $\QuT$. Moreover, we let the sets $\tilde{A},\tilde{B} \subseteq \Omega_T$ be given as
\begin{equation*}
\tilde{A} := \bigcup_{(\tilde{T},v,\zeta)\in A \cap \tilde{\Omega}^{\ast}_{T}  \colon (\tilde{T},v) = (T,v) } \{(T,o,\zeta)\} 
\end{equation*} and
\begin{equation*}
\tilde{B} := \bigcup_{(\tilde{T},v,\zeta)\in B   \colon (\tilde{T},v) = (T,v) } \{(T,o,\zeta)\} \ .
\end{equation*}
In words, $\tilde{A}$ is the set of all $0/1$-colorings of $(T,o)$ which we obtain by taking all $0/1$-colored trees in $A \cap \tilde{\Omega}^{\ast}_{T}$ and considering their coloring of $(T,o)$. Observe that the sets $\tilde{A}$ and $\tilde{B}$ are invariant for the simple exclusion process with initial distribution
\begin{equation}
\PuT := \delta_{(T,o)} \times\pi_{\rho,T} 
\end{equation} where $\delta_{(T,o)}$ denotes the Dirac measure on $\mathcal{T}$ concentrated on $(T,o)$. Moreover, since $\QuT$ is absolutely continuous with respect to $\PuT$ for all $\rho \in (0,1)$, we have that
\begin{equation*}
 \PuT( \tilde{A}  ) > 0 \ \text{ and } \   \PuT(\tilde{B})  >0
\end{equation*} is satisfied for $\AGW$-almost every tree $(T,o) \in \mathcal{T}$. Using the ergodicity of the simple exclusion process on $\Omega_T$, we conclude that
\begin{equation}\label{eq:setsABtilde}
 \PuT( \tilde{A} ) = \PuT( \tilde{B} ) =1 \ .
 \end{equation}
In particular, the sets $\tilde{A}$ and $\tilde{B}$ are not disjoint. From this, we want to deduce that $A$ and $B$ are not disjoint as well. 
For a tree $(T,o) \in \mathcal{T}$ and $x,y \in V(T)$, let $[x,y]$ be the sites in the shortest path connecting $x$ and $y$ in $(T,o)$. 
The following lemma is the analogue of Lemma 4 in \cite{S:TaggedParticleZd} and Lemma 2.3 in \cite{CCGS:SpeedTree}. 

\begin{lemma}\label{lem:ergodicity} For $\PuT$-almost every $(T,o,\eta)\in \Omega_T$, there exist sites $v,w,x,y,z \in V(T)$ with the following properties:
\begin{itemize}
\item[(i)\label{item1}] $(T,y,\eta) \in A$, $(T,z,\eta) \in B$
\item[(ii)\label{item2}] $\eta(v)=\eta(w)=0$ and $\eta(a)=0$ for all $a\in [y,z] \setminus \{y,z\}$
\item[(iii)\label{item3}] $x$ and $z$ are located in different branches with respect to $y$ in $T$.
\item[(iv)\label{item4}] $v,w,y$ are located in pairwise different branches with respect to $x$ in $T$.
\item[(v)\label{item5}] The path $[v,x]$ contains at least $|x-y|+1$ vacant sites.
\end{itemize}
\end{lemma}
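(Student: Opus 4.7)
The plan is to combine the full-measure property from \eqref{eq:setsABtilde} with the rich branching structure of the supercritical, leafless augmented Galton--Watson tree, together with the positive vacancy density $1-\rho > 0$ of the product measure $\pi_{\rho,T}$.

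For property (i), I would simply invoke \eqref{eq:setsABtilde}: since both $\tilde{A}$ and $\tilde{B}$ have $\PuT$-full measure, so does $\tilde{A} \cap \tilde{B}$, and unwinding the definitions of these sets gives that for $\PuT$-almost every $(T,o,\eta)$ there exist vertices $y, z \in V(T)$ with $(T,y,\eta) \in A$ and $(T,z,\eta) \in B$.

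For the geometric conditions (iii)--(iv), I would construct $x$ by walking from $y$ along a ray in the branch of $y$ opposite from $z$. Since the tree is $\AGW$-a.s.\ supercritical and leafless, such a ray almost surely encounters infinitely many vertices of degree at least $3$. Picking $x$ to be such a vertex sufficiently far along this ray automatically places $y$ between $x$ and $z$ in $T$, giving (iii); one of the branches of $x$ then contains $y$ (and hence $z$), and selecting $v$ and $w$ deep inside two of the remaining branches gives (iv).

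The vacancy conditions (ii) and (v) are consequences of the product structure of $\pi_{\rho,T}$: along any fixed infinite ray in $T$, the sites are occupied independently with probability $\rho$, so a Borel--Cantelli argument yields arbitrarily long all-vacant initial segments along the branches of $x$ (giving (v)), as well as all-vacant interior paths $[y,z]$ for suitable witness pairs (part of (ii)). The main obstacle is the simultaneous arrangement of the witness event (i), which depends globally on $\eta$, with the local vacancy conditions (ii), (v). To resolve this I would exploit that $\tilde{A}$ and $\tilde{B}$ are invariant under arbitrary exclusion exchanges on $\Omega_T$: given a witness pair $(y,z)$, a finite sequence of exchanges in regions disjoint from $[y,z]$ can install the vacancies required at $v, w$ and along $[v,x]$ while keeping $\eta$ inside $\tilde{A} \cap \tilde{B}$, albeit possibly with different witnesses after the modification. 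Combined with a Fubini-type argument on disjoint coordinate regions and a measurable selection of $(v,w,x,y,z)$ as functions of $\eta$, this yields the desired full-measure statement.
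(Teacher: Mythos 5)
There is a genuine gap in your treatment of condition \hyperref[item2]{(ii)}, specifically the requirement that the interior of the path $[y,z]$ be vacant. The paper obtains this without ever touching the configuration, by a pigeonhole argument you are missing: since $B = \tilde{\Omega}_T^{\ast} \setminus (A \cap \tilde{\Omega}_T^{\ast})$, \emph{every} occupied site $a$ satisfies either $(T,a,\eta) \in A$ or $(T,a,\eta) \in B$. Listing the occupied sites along the shortest path from $y$ (in $A$) to $z$ (in $B$) in order, there must be two consecutive occupied sites $y^{\prime}, z^{\prime}$ with $(T,y^{\prime},\eta) \in A$, $(T,z^{\prime},\eta)\in B$ and no occupied site strictly between them; replacing $(y,z)$ by $(y^{\prime},z^{\prime})$ gives a witness pair whose connecting path has vacant interior. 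Your proposed alternative --- installing the needed vacancies by a finite sequence of exclusion exchanges --- does not work here for two reasons. First, it is self-contradictory as stated: you cannot vacate sites of $[y,z]$ using exchanges ``in regions disjoint from $[y,z]$.'' Second, and more fundamentally, modifying $\eta$ proves the existence of the five sites for a \emph{different} configuration, whereas the lemma is a statement about $\PuT$-almost every fixed $\eta$; it is later applied to the actual configuration, from which two explicit particle-transport schemes are built, so one cannot substitute a modified configuration at this stage. The invariance of $A$ and $B$ under the dynamics is used in the paper only \emph{after} this lemma, to follow those transport schemes, not to produce the witnesses.

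The remaining parts of your proposal are essentially the paper's argument and are fine: \eqref{eq:setsABtilde} gives \hyperref[item1]{(i)}; the almost sure existence of a degree-$\geq 3$ vertex $x$ in a branch of $y$ away from $z$ gives \hyperref[item3]{(iii)} and \hyperref[item4]{(iv)}; and a Borel--Cantelli argument along rays in two branches of $x$ disjoint from $[x,y]$ produces $w$ and $v$. Note that for \hyperref[item5]{(v)} you do not need an all-vacant initial segment: it suffices that a fixed ray in the relevant branch contains infinitely many vacant sites, and one takes $v$ to be the first vacant site such that $[x,v]$ has accumulated $|x-y|+1$ vacancies.
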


\begin{proof} Using \eqref{eq:setsABtilde}, there almost surely exist sites $y,z \in V(T)$ such that $(T,y,\eta) \in A$ and $(T,z,\eta) \in B$ holds. Without loss of generality, the shortest path connecting $y$ and $z$ can be assumed to consist only of vacant sites. To see this, observe that for any $(T,a,\eta) \in \tilde{\Omega}_T^{\ast}$ with $\eta(a)=1$ and $a\in V(T)$, we have that either $(T,a,\eta) \in A$ or $(T,a,\eta) \in B$ holds. Hence, along the shortest path connecting $y$ and $z$, there must be a pair of occupied sites $y^{\prime}$ and $z^{\prime}$ with
$(T,y^{\prime},\eta) \in A$ and $(T,z^{\prime},\eta) \in B$
 and only vacant sites on the shortest path between them. Thus, we can take these sites $y^{\prime}$ and $z^{\prime}$ as our new choices of $y$ and $z$ which satisfy \hyperref[item1]{(i)}. By our assumptions on the augmented Galton--Watson tree, there almost surely exists a site $x$ in a branch of $y$ different from the one containing $z$ with degree at least $3$. Let $C(x,y)$ and $D(x,y)$ denote the vertices of two distinct branches of $x$ which do not intersect the path $[x,y]$. Using a Borel-Cantelli argument, we see that $C(x,y)$ and $D(x,y)$ both contain $\Pu$-almost surely a ray starting at $x$ with infinitely many vacant sites. Let $w$ be the first vacant site along that ray in $C(x,y)$. Let $v$ be the first vacant site along that ray in $D(x,y)$ such that there are $|x-y|+1$ empty sites along the path $[x,v]$.
\end{proof}

\begin{proof}[Proof of Lemma \ref{lem:ergodicityFixedTree}]
Take a configuration $(T,o,\eta)\in \Omega_T$ according to $\PuT$ which satisfies the properties \hyperref[item1]{(i)} to \hyperref[item5]{(v)} of Lemma \ref{lem:ergodicity} with sites $v,w,x,y,z$ and set
\begin{equation}\label{def:setN}
N:= \left\lbrace v,w,x,y,z, [v,x], [w,x], [x,y], [y,z] \right\rbrace \ .
\end{equation}  We fix a time $t_0 > 0$ and define a $0/1$-coloring $\tilde{\eta}\in \{ 0,1\}^{V(T)}$ as follows. We let $\tilde{\eta}$ agree with $\eta$ on $N$. On $V(T) \setminus N$, let $\tilde{\eta}$ have the law of a simple exclusion process at time $t_0$ started from $\eta$ where all moves involving a site in $N$ are suppressed. \\
We will now provide two ways of transforming $\eta$ into $\eta^{w,z}$ which only involve the sites in $N$ as depicted in Figure \ref{figureErgodicity}. This also provides two ways of changing $\tilde{\eta}$ into $\tilde{\eta}^{w,z}$ for any fixed $t_0>0$. 
At the beginning, we assume for both transformations that all particles in $[x,y]\setminus \{ y\}$ are moved into the empty sites within $[v,x]\setminus \{ v,x\}$ in an arbitrary way using only nearest neighbor moves within $N$. In a next step, the two transformations differ in performing the following transitions.
\begin{itemize}
\item[(a)\label{def:way1}] Push the particles along the path $[y,v]$ towards $v$, i.e.\@  for $\{ v_i, 1\leq i \leq k\}$ being successive vertices in $[v,y]$ with $\eta(v_i)=1$,  move the particle from $v_1$ to $v$, then the particle from $v_{2}$ to $v_{1}$ and so on. Next, we push the particles in $[z,w]$ towards $w$ in the same way. Afterwards, push the particles along $[v,y]$ towards $y$.
\item[(b)\label{def:way2}]  Push the particles along the path $[y,w]$ towards $w$ in the same way as described in \hyperref[def:way1]{(a)}. Afterwards, move the particle at $z$ to $y$ along $[z,y]$. 
\end{itemize}
At the end, in both transformations all particles which were moved to the empty sites in $[v,x]\setminus \{ x\}$ at the beginning are moved back to their original positions. \\
\begin{figure}

\begin{center}
\begin{tikzpicture}[scale=0.9,radius=1]

	\node[shape=circle,scale=1.2,draw] (A2) at (0,0.5){} ;
 	\node[shape=circle,scale=1.2,draw] (B2) at (1.5,1.5){} ;
	\node[shape=circle,scale=1.2,draw] (C2) at (3,0.5) {};
	\node[shape=circle,scale=1.2,draw] (D2) at (1.5,3.15){} ;
	\node[shape=circle,scale=1.2,draw] (E2) at (1.5,4.8){} ;
	
\node[scale=1.1]  at (1.8,4.4){$z$};
\node[scale=1.1]  at (1.5,0.9){$x$};
\node[scale=1.1]  at (0.4,0.1){$v$};
\node[scale=1.1]  at (1.8,2.7){$y$};
\node[scale=1.1]  at (2.6,0.1){$w$};

	\draw[thick] (A2) to (B2);		
	\draw[thick] (B2) to (C2);		
	\draw[thick] (B2) to (D2);		
	\draw[thick] (D2) to (E2);		

	\draw[thick] (A2) to (0,0);	
	\draw[thick] (C2) to (3,0);	
	\draw[thick] (A2) to (-0.45,0.8);	
	\draw[thick] (C2) to (3.45,0.8);		
	\draw[thick] (E2) to (1.95,5.1);	
	\draw[thick] (E2) to (1.05,5.1);

 \node[shape=circle,   fill=RoyalBlue] (k2) at (D2){}; 
 
 	\node[shape=circle,scale=1,draw] (A3) at (5,3.1){} ;
 	\node[shape=circle,scale=1,draw] (D3) at (6,4.7){} ;
 	 	\node[shape=circle,scale=1,draw] (E3) at (6,5.7){} ;
	\node[shape=circle,scale=1,draw] (C3) at (7,3.1) {};
	\node[shape=circle,scale=1,draw] (B3) at (6,3.65){} ;

	\draw[thick] (A3) to (B3);		
	\draw[thick] (B3) to (C3);		
	\draw[thick] (B3) to (D3);		
		\draw[thick] (E3) to (D3);	
	\draw[thick] (A3) to (5,2.75);	
	\draw[thick] (C3) to (7,2.75);	
	\draw[thick] (A3) to (4.7,3.3);	
	\draw[thick] (C3) to (7.3,3.3);		
	\draw[thick] (E3) to (6.3,5.9);	
	\draw[thick] (E3) to (5.7,5.9);

\begin{scope}[shift={(E3)}]
\clip (0,0) circle (0.17);
\node[shape=circle,scale=0.8,fill=red] (k1) at (E3){};
 \foreach \i in {-8,...,8}
  \foreach \j in {-8,...,8}
\fill[red!5] (1/20*\j cm,1/20*\i cm) circle (0.018);
   \end{scope}
 \node[shape=circle,scale=0.8,  fill=RoyalBlue] (k1) at (A3){};
 
  	\node[shape=circle,scale=1,draw] (A3) at (8.5,3.1){} ;
  	\node[shape=circle,scale=1,draw] (E3) at (9.5,5.75){} ;
 	\node[shape=circle,scale=1,draw] (D3) at (9.5,4.7){} ;
	\node[shape=circle,scale=1,draw] (C3) at (10.5,3.1) {};
	\node[shape=circle,scale=1,draw] (B3) at (9.5,3.65){} ;

	\draw[thick] (A3) to (B3);		
	\draw[thick] (B3) to (C3);		
	\draw[thick] (B3) to (D3);		
    \draw[thick] (D3) to (E3);	
	
	\draw[thick] (A3) to (8.5,2.75);	
	\draw[thick] (C3) to (10.5,2.75);	
	\draw[thick] (A3) to (8.2,3.3);	
	\draw[thick] (C3) to (10.8,3.3);		
	\draw[thick] (E3) to (9.8,5.9);	
	\draw[thick] (E3) to (9.2,5.9);

\begin{scope}[shift={(C3)}]
\clip (0,0) circle (0.17);
\node[shape=circle,scale=0.8,fill=red] (k1) at (C3){};
 \foreach \i in {-8,...,8}
  \foreach \j in {-8,...,8}
\fill[red!5] (1/20*\j cm,1/20*\i cm) circle (0.018);
   \end{scope}

 \node[shape=circle,scale=0.8,   fill=RoyalBlue] (k1) at (A3){};
 
   	\node[shape=circle,scale=1,draw] (A3) at (12,3.1){} ;
 	\node[shape=circle,scale=1,draw] (D3) at (13,4.7){} ;
 	\node[shape=circle,scale=1,draw] (E3) at (13,5.7){} ;
	\node[shape=circle,scale=1,draw] (C3) at (14,3.1) {};
	\node[shape=circle,scale=1,draw] (B3) at (13,3.65){} ;

	\draw[thick] (A3) to (B3);		
	\draw[thick] (B3) to (C3);		
	\draw[thick] (B3) to (D3);		
	\draw[thick] (D3) to (E3);	
	\draw[thick] (A3) to (12,2.75);	
	\draw[thick] (C3) to (14,2.75);	
	\draw[thick] (A3) to (11.7,3.3);	
	\draw[thick] (C3) to (14.3,3.3);		
	\draw[thick] (E3) to (13.3,5.9);	
	\draw[thick] (E3) to (12.7,5.9);	

\begin{scope}[shift={(C3)}]
\clip (0,0) circle (0.17);
\node[shape=circle,scale=0.8,fill=red] (k1) at (C3){};
 \foreach \i in {-8,...,8}
  \foreach \j in {-8,...,8}
\fill[red!5] (1/20*\j cm,1/20*\i cm) circle (0.018);
   \end{scope}
 \node[shape=circle,scale=0.8,   fill=RoyalBlue] (k1) at (D3){};

 \node[shape=circle,scale=1,draw] (A3) at (5,-0.55){} ;
 	\node[shape=circle,scale=1,draw] (D3) at (6,1.05){} ;
 	 	\node[shape=circle,scale=1,draw] (E3) at (6,2.05){} ;
	\node[shape=circle,scale=1,draw] (C3) at (7,-0.55) {};
	\node[shape=circle,scale=1,draw] (B3) at (6,0){} ;

	\draw[thick] (A3) to (B3);		
	\draw[thick] (B3) to (C3);		
	\draw[thick] (B3) to (D3);	
		\draw[thick] (E3) to (D3);

	\draw[thick] (A3) to (5,1.75-2.65);	
	\draw[thick] (C3) to (7,1.75-2.65);	
	\draw[thick] (A3) to (4.7,2.3-2.65);	
	\draw[thick] (C3) to (7.3,2.3-2.65);		
	\draw[thick] (E3) to (6.3,4.9-2.65);	
	\draw[thick] (E3) to (5.7,4.9-2.65);	

\begin{scope}[shift={(E3)}]
\clip (0,0) circle (0.17);
\node[shape=circle,scale=0.8,fill=red] (k1) at (E3){};
 \foreach \i in {-8,...,8}
  \foreach \j in {-8,...,8}
\fill[red!5] (1/20*\j cm,1/20*\i cm) circle (0.018);
   \end{scope}
 \node[shape=circle,scale=0.8, fill=RoyalBlue] (k1) at (C3){};
 
  \node[shape=circle,scale=1,draw] (A3) at (8.5,-0.55){} ;
 	\node[shape=circle,scale=1,draw] (D3) at (9.5,1.05){} ;
 	 	\node[shape=circle,scale=1,draw] (E3) at (9.5,2.05){} ;
	\node[shape=circle,scale=1,draw] (C3) at (10.5,-0.55) {};
	\node[shape=circle,scale=1,draw] (B3) at (9.5,0){} ;

	\draw[thick] (A3) to (B3);		
	\draw[thick] (B3) to (C3);		
	\draw[thick] (B3) to (D3);		
		\draw[thick] (E3) to (D3);	
	\draw[thick] (A3) to (8.5,1.75-2.65);	
	\draw[thick] (C3) to (10.5,1.75-2.65);	
	\draw[thick] (A3) to (8.2,2.3-2.65);	
	\draw[thick] (C3) to (10.8,2.3-2.65);		
	\draw[thick] (E3) to (9.8,4.9-2.65);	
	\draw[thick] (E3) to (9.2,4.9-2.65);

\begin{scope}[shift={(D3)}]
\clip (0,0) circle (0.17);
\node[shape=circle,scale=0.8,fill=red] (k1) at (D3){};
 \foreach \i in {-8,...,8}
  \foreach \j in {-8,...,8}
\fill[red!5] (1/20*\j cm,1/20*\i cm) circle (0.018);
   \end{scope}
 \node[shape=circle,scale=0.8,fill=RoyalBlue] (k1) at (C3){};

\node[scale=2.3] (arrowing) at (4.2,4.15){$\rightarrow$};
\node[scale=2.3] (arrowing) at (4.2,0.5){$\rightarrow$};

\node[scale=1.1] (arrowing) at (4,4.55){\hyperref[def:way1]{(a)}};
\node[scale=1.1] (arrowing) at (4,0.9){\hyperref[def:way2]{(b)}};

\node[scale=2.3] (arrowing) at (7.75,4.15){$\rightarrow$};
\node[scale=2.3] (arrowing) at (7.75,0.5){$\rightarrow$};

\node[scale=2.3] (arrowing) at (11.25,4.15){$\rightarrow$};

\begin{scope}[shift={(1.5,4.8)},scale=1.47]
\clip (0,0) circle (0.14);
\node[shape=circle,scale=1,fill=red] (k5) at (0,0){};
 \foreach \i in {-8,...,8}
  \foreach \j in {-8,...,8}
\fill[red!5] (1/20*\j cm,1/20*\i cm) circle (0.018);
   \end{scope}


\end{tikzpicture}
\end{center}
\caption{\label{figureErgodicity} Transformations for $\eta$ to $\eta^{w,z}$ in a sample of an augmented Galton--Watson tree for the special case when $v,w$ and $y$ are neighbors of $x$ and $[x,w]$ as well as $[x,v]$ are empty.}
\end{figure}
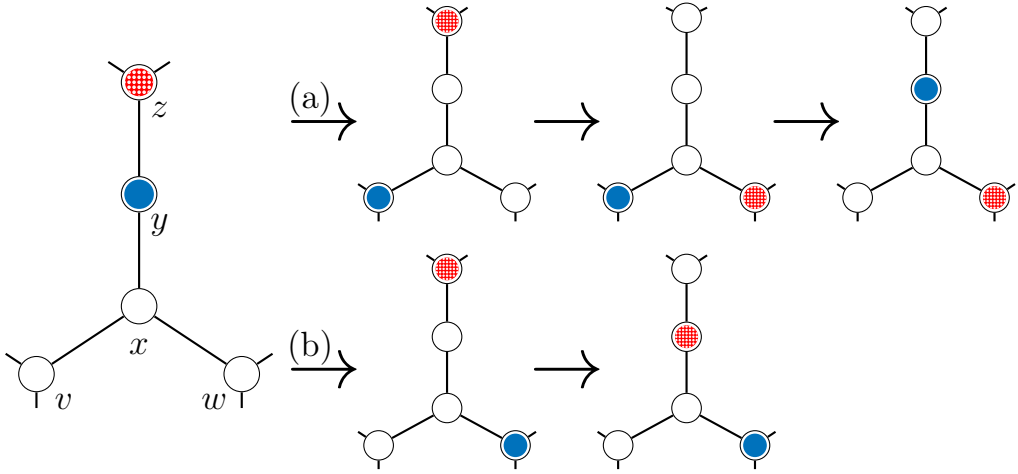

Following the transformation according to \hyperref[def:way1]{(a)}, we see that $(T,y,\tilde{\eta}^{w,z}) \in A$ holds $\PuT$-almost surely, using that $A$ is invariant for the environment process and  \hyperref[item1]{(i)} of Lemma \ref{lem:ergodicity}. For the transformation according to \hyperref[def:way2]{(b)}, note that $(T,y,\tilde{\eta}^{w,z}) \in B$ holds following the trajectory of the particle originally at $z$ and using that $B$ is invariant for the environment process. Observe that at time $t_0$, the simple exclusion process started from $(T,o,\eta)$ agrees with $(T,o,\tilde{\eta}^{w,z})$ with positive probability using the graphical representation. Hence, we obtain the desired contradiction of $A$ and $B$ not being disjoint. \\

For the second statement in Lemma \ref{lem:ergodicityFixedTree}, we let $S$ denote the set of trees which we obtain by deleting all $0/1$-colorings in the elements of $A$, i.e.\@
\begin{equation}\label{def:uncoloredTrees}
U:= \left\lbrace (T,o) \colon (T,o,\zeta) \in A\right\rbrace \subseteq \mathcal{T} \ .
\end{equation} From the construction of the $\sigma$-algebras on $\Omega$ and $\mathcal{T}$, we see that $U$ forms a Borel set of trees. Using the first statement of Lemma \ref{lem:ergodicityFixedTree}, we obtain that $U$ is invariant for the environment process started from $\Qu$. 
\end{proof} 
Next, we show that $\UGW(U) \in \{0,1\}$ holds for the set $U$ defined in \eqref{def:uncoloredTrees}. This will yield Proposition \ref{pro:ergodicity} since we have that
\begin{equation*} \Qu(A) = \int_{\mathcal{T}} \mathds{1}_{\left\{ (T,o) \in U \right\}}  \dif \UGW(T,o) 
\end{equation*} holds by Lemma \ref{lem:ergodicityFixedTree}. We follow the arguments in the proof of Theorem 17.13 in \cite{LP:ProbOnTrees} which were used in order to establish ergodicity for the environment process of the simple random walk on supercritical Galton--Watson trees.

\begin{lemma}\label{lem:mixingTreeComponent}
Let $(T_t,o_t,\zeta_t)_{t \geq 0}$ denote the environment process with state space $\Omega^{\ast}$ and initial distribution $\Qu$. The corresponding dynamical system is mixing in the tree-component, i.e.\@ we have that
\begin{equation*}
\UGW( (T_0,o_o) \in C, (T_t,o_t) \in D ) \overset{t \rightarrow \infty}{\longrightarrow}\UGW( (T_0,o_o) \in C) \UGW((T_t,o_t) \in D )
\end{equation*} holds for all Borel-sets $C,D \subseteq \mathcal{T}$. In particular, we have that $\UGW(\tilde{U}) \in \{ 0,1\}$ holds for any set of trees $\tilde{U}$ which is invariant for the environment process.
\end{lemma}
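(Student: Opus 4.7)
My plan is to adapt the regeneration argument used by Lyons and Peres \cite[Chapter 17]{LP:ProbOnTrees} for the simple random walk on Galton--Watson trees to the tagged-particle setting. The key point is that Lemma \ref{lem:mixingTreeComponent} concerns only the tree-marginal of the environment process; the underlying tree $T_0$ is fixed along the trajectory (only the position of the tagged particle changes), so once we know where the tagged particle is located relative to the branching structure of $T_0$, the evolution of the $0/1$-coloring can be ignored.

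First, I would introduce regeneration times. For a realization $(T_t,o_t,\zeta_t)_{t \geq 0}$, let $v_0, v_1, v_2,\dots$ be the successive vertices of $T_0$ visited by the tagged particle that lie strictly farther from $o_0$ than all previously visited vertices (measured by the horodistance $\langle \cdot \rangle_{(T_0,o_0)}$), and let $\sigma_n$ be the first time $v_n$ is visited. Call $\sigma_n$ a regeneration time if, after $\sigma_n$, the tagged particle never revisits the predecessor of $v_n$. Combining \eqref{eq:speedvariableexpected} with Proposition \ref{pro:transience} yields $\langle o_t \rangle_{(T_0,o_0)} \to \infty$ with $\Qu$-probability one, and a two independent copies argument in the spirit of Corollary \ref{cor:twoEnds}, applied at the stopping times $\sigma_n$, should then give that infinitely many $\sigma_n$ are regeneration times and $|o_{\sigma_n}| \to \infty$ almost surely.

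The second ingredient is the fresh-subtree property: conditional on the path in $T_0$ from $o_0$ to $v_n$, the descendant subtree rooted at $v_n$ is, by the branching property, an $\AGW$-distributed tree that is independent of any fixed-radius ball around $o_0$ not reaching $v_n$. At a regeneration time the tagged particle stays in this descendant subtree forever, so for $\Qu$-typical realizations the ball of radius $r$ around $o_t$ eventually lies inside the descendant subtree of some very late regeneration vertex. Approximating arbitrary Borel sets $C, D \subseteq \mathcal{T}$ by cylinders depending on a fixed-radius ball around the root, this independence would yield
\[
\UGW((T_0,o_0) \in C,\ (T_t,o_t) \in D) \longrightarrow \UGW(C)\,\UGW(D)
\]
as $t \to \infty$, which is the mixing statement.

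Finally, to deduce $\UGW(\tilde{U}) \in \{0,1\}$ for a tree-Borel set $\tilde{U}$ that is invariant under the environment process, I would take $C = D = \tilde{U}$ in the mixing statement: invariance forces the left-hand side to equal $\UGW(\tilde{U})$ for every $t$, while mixing forces it to tend to $\UGW(\tilde{U})^2$, so $\UGW(\tilde{U}) \in \{0,1\}$. The main obstacle is the regeneration step: since $(o_t)_{t \geq 0}$ is not Markov, the classical Galton--Watson regeneration arguments do not apply verbatim, and one must show that, regardless of the past of the environment, the tagged particle has a uniformly positive probability of reaching the next horodistance level without ever returning. I expect this to be settled by combining the strictly positive averaged drift \eqref{eq:speedvariable} with the martingale structure of Lemma \ref{lem:martingaleRelation} and the stationarity of $\Qu$ under the environment process.
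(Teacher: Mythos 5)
Your overall strategy --- regeneration points, independence of the tree beyond a regeneration vertex, approximation of $C,D$ by cylinder sets depending on a finite ball, and the deduction of the zero--one law from mixing --- is the same as the paper's. The genuine gap is exactly at the step you flag as the main obstacle, and the fix you propose would not close it. The positive averaged drift together with the martingale decomposition of Lemma \ref{lem:martingaleRelation} yields $\langle o_t\rangle_{(T_0,o_0)}\to\infty$, i.e.\ transience and positive speed, but positive speed does not produce cut edges (edges traversed exactly once): one must still rule out that every edge along the limiting ray is crossed at least twice. Since the tagged particle is not Markov and the dynamics is not uniformly elliptic (all neighbours of the tagged particle may be occupied), there is no \emph{uniform} positive probability, given an arbitrary past of the environment, of advancing one horodistance level and never returning; that is precisely the assertion requiring proof, and stationarity of $\Qu$ plus the martingale structure do not deliver it.

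The paper closes this gap with two devices absent from your proposal. First, it glues two independent environment processes at time $0$, using reversibility of $\Qu$, into a bi-infinite path space on which the shift $S$ is measure preserving; this is what lets Poincar\'e recurrence upgrade $\EX\times\Qu(\textup{Regen})>0$ to the existence of infinitely many regeneration points. Your one-sided stopping times $\sigma_n$ together with a ``two independent copies'' argument in the spirit of Corollary \ref{cor:twoEnds} give distinct limiting rays but not recurrence of the regeneration event. Second, and more importantly, positivity of $\EX\times\Qu(\textup{Regen})$ is itself obtained by a combinatorial particle-rearrangement coupling (Lemmas \ref{lem:Hset} and \ref{lem:ShiftingParticlesMixing} and the explicit transformation in the proof of Lemma \ref{lem:RegenerationPoints}): starting from the positive-probability event $H_{0,k}$ --- a fresh point at $x_0$, an exit point at $x_k$, and the connecting path disjoint from the rest of the trajectory --- one exhibits a finite sequence of nearest-neighbour moves, realized with positive probability by the graphical representation, that carries the tagged particle from $x_0$ to $x_k$ without returning to $x_0$ while restoring the surrounding configuration. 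This construction, in the spirit of Lemma \ref{lem:ergodicity}, is the essential missing ingredient. The rest of your argument (fresh descendant subtrees at regeneration vertices, cylinder approximation, taking $C=D=\tilde U$) matches the paper's slab decomposition and is sound once the regeneration step is secured.
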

To prove Lemma \ref{lem:mixingTreeComponent}, we will need some preliminaries. Recall that the $\sigma$-algebras on $\mathcal{T}$ and $\Omega$ are generated by sets of trees which agree within a ball of finite radius around the root. Including all finite unions and intersections of these balls, we see that the balls generating the $\sigma$-algebra on $\mathcal{T}$ form a semi-algebra. Hence, using a well-known result from ergodic theory, it suffices to show mixing in the tree component for the sets $C$ and $D$ which take into account only a finite range of the tree around its root, see \cite[Exercise 2.7.3(1)]{EW:Ergodic}.  \\

For a given tree $(T,o) \in \mathcal{T}$, let $\overset{\leftrightarrow}{x}= ( \dots, x_{-1},x_0,x_{1}, \dots )$ be a bi-infinite path in $(T,o)$ with $x_0=o$. We denote by $\overset{\leftrightarrow}{T}$ the space of all such bi-infinite paths for which both ends converge to distinct rays in $\partial (T,o)$. Define the \textbf{path space} of trees to be
\begin{equation}
\text{PathsInTrees} := \left\lbrace (\overset{\leftrightarrow}{x},T) \colon \overset{\leftrightarrow}{x} \in \overset{\leftrightarrow}{T}, (T,o)\in \mathcal{T} \right\rbrace \ .
\end{equation}  Let $S$ be the map which shifts $\overset{\leftrightarrow}{x}$ to the right and changes the root of $T$ to $x_1$, i.e.\@
\begin{equation*}
(S\overset{\leftrightarrow}{x})_n = x_{n+1}
\end{equation*} holds for all $n \in \mathbb{Z}$ and
\begin{equation*}
 S(\overset{\leftrightarrow}{x},T) := (S\overset{\leftrightarrow}{x},T) \ .
\end{equation*}

As in Corollary \ref{cor:twoEnds}, choose an initial configuration $(T,o,\zeta)\in \Omega^{\ast}$ according to $\Qu$. We consider two independently sampled environment processes $(T_t,o_t,\zeta_t)_{t \geq 0}$ and $(T^{\prime}_t,o^{\prime}_t,\zeta^{\prime}_t)_{t \geq 0}$, which are both started from $(T,o,\zeta)$. Let $\overset{\rightarrow}{x}=(x_0,x_1,\dots)$ and $\overset{\rightarrow}{x}^{\prime}=(x^{\prime}_0,x^{\prime}_1,\dots)$ denote the trajectories of the tagged particles in the environment processes $(T_t,o_t,\zeta_t)_{t \geq 0}$ and $(T^{\prime}_t,o^{\prime}_t,\zeta^{\prime}_t)_{t \geq 0}$, respectively. We join the trajectories $\overset{\rightarrow}{x}$ and $\overset{\rightarrow}{x}^{\prime}$ to obtain a bi-infinite path $\overset{\leftrightarrow}{x}$ by
\begin{equation*}
\overset{\leftrightarrow}{x} := (\dots, x_2^{\prime},x^{\prime}_1, x_0,x_1,x_2,\dots )
\end{equation*}
and denote the corresponding law of $(\overset{\leftrightarrow}{x},T)$ in $\text{PathsInTrees}$ by $\EX\times\Qu$.
Note that $\overset{\leftrightarrow}{x} \in \overset{\leftrightarrow}{T}$ holds almost surely, since by Corollary \ref{cor:twoEnds}, the two trajectories of the tagged particles converge almost surely to two distinct rays. 
 The path space is equipped with the $\sigma$-algebra $\mathcal{F}$ induced by the environment processes.  Since the environment process is a reversible Markov process with respect to $\Qu$, observe that
\begin{equation*}
\left( \text{PathsInTrees}, \mathcal{F}, \EX\times\Qu, S\right) 
\end{equation*}
forms a measure-preserving system, i.e.\@ we have that
\begin{equation}\label{eq:shiftInvariance}
\EX\times\Qu(F) = \EX\times\Qu(S^{-1}F)
\end{equation} holds for all $F \in \mathcal{F}$. Define the event of having a \textbf{regeneration point} at $x_0$ to be
\begin{equation*}
\textup{Regen} := \left\lbrace  (\overset{\leftrightarrow}{x},T) \in  \text{PathsInTrees} \text{ s.t. } \forall n \leq 0 \colon x_n \neq x_1 \text{ and } \forall n \geq 1 \colon x_n \neq x_{0} \right\rbrace \ .
\end{equation*}
In words, $x_0$ is a regeneration point if the edge $\{ x_0,x_1\}$ is traversed precisely once. The following lemma is the analogue of Proposition 17.12 in \cite{LP:ProbOnTrees} for the simple random walk on Galton--Watson trees.
\begin{lemma}\label{lem:RegenerationPoints} For $\EX\times\Qu$ almost every configuration $(\overset{\leftrightarrow}{x},T)$, we have infinitely many $n\in \mathbb{Z}$ such that $S^n(\overset{\leftrightarrow}{x},T) \in \textup{Regen}$ holds.
\end{lemma}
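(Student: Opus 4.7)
The plan is to argue directly, combining Corollary~\ref{cor:twoEnds} (which controls the interaction between the forward and backward halves of the bi-infinite path) with a Lyons--Peres-type regeneration statement for the forward tagged particle alone.

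First, I would apply Corollary~\ref{cor:twoEnds}: almost surely the forward trajectory $(x_n)_{n\geq 0}$ and the backward trajectory $(x_{-n})_{n\geq 0}$ converge to distinct rays $\xi,\xi'\in\partial(T,o)$. Let $v_{\ast}$ be their deepest common ancestor. By Proposition~\ref{pro:transience} applied to the backward half, $v_{\ast}$ is visited only finitely often in backward time, and therefore the set $V^-$ of vertices visited by the backward half that lie in the $\xi$-subtree beyond $v_{\ast}$ is finite. Hence for all sufficiently large $n$, both endpoints of the edge $\{x_n,x_{n+1}\}$ lie in the $\xi$-subtree beyond $v_{\ast}$ and outside $V^-$, so this edge is never crossed by the backward half. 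The symmetric statement holds for $n$ sufficiently negative, with the roles of the two halves interchanged.

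Next I would establish the one-sided regeneration statement: almost surely there are infinitely many $n\geq 0$ such that the edge $\{x_n,x_{n+1}\}$ is traversed exactly once by the forward half. The idea is that by the positive horodistance speed of Remark~\ref{rem:speed}, the tagged particle enters infinitely many fresh descendant subtrees along its record-horodistance ray; by the strong Markov property of the environment process together with the i.i.d.\ subtree structure under $\AGW$, each such entry has a uniformly positive conditional probability of never returning to the parent edge. A Borel--Cantelli argument then yields infinitely many once-traversed edges in the forward half. Combining this with the geometric reduction of the previous paragraph gives infinitely many $n\geq 0$ with $S^n(\overset{\leftrightarrow}{x},T)\in\textup{Regen}$; the analogous argument for the backward half gives infinitely many $n\leq 0$.

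The main obstacle is this one-sided regeneration statement, because the position of the tagged particle is not itself a Markov chain and \cite[Chapter 17]{LP:ProbOnTrees} cannot be quoted as a black box. The environment process \emph{is} Markov on $\Omega^{\ast}$ with invariant measure $\Qu$ supplied by Proposition~\ref{pro:invariance}, so the correct setting is to work at the level of the environment process: one must show that conditioning on the event ``the tagged particle never returns to $x_n$'' preserves enough of the joint law of environment and tree to iterate the cone/subtree escape estimates from \cite[Chapter 17]{LP:ProbOnTrees}. A useful auxiliary tool here is $S$-invariance of $\EX\times\Qu$, which ensures that the distribution of the relevant "fresh subtree" seen from a record-horodistance vertex does not degenerate as $n\to\infty$. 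Once this is carried out, the three ingredients combine to give the claim.
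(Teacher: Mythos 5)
Your geometric reduction in the first paragraph is sound and mirrors what the paper does inside Lemma \ref{lem:Hset}, where the last common vertex $a$ of the two limiting rays plays exactly the role of your $v_{\ast}$. The gap is in your ``one-sided regeneration statement''. You propose to extract infinitely many once-traversed edges from a uniformly positive conditional escape probability at each fresh subtree plus Borel--Cantelli, but the second Borel--Cantelli lemma needs (at least approximate) independence of these escape events, and the exclusion dynamics destroy precisely the independence that makes this argument work for the simple random walk in \cite[Chapter 17]{LP:ProbOnTrees}: when the tagged particle enters a ``fresh'' subtree, the particle configuration there has been evolving and exchanging particles with the rest of the tree for the entire past, so it is not an i.i.d.\ copy of anything and the successive escape events are correlated in a way you do not control. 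You flag this yourself as ``the main obstacle'' and then assert that it can be carried out; that assertion is the whole difficulty, so the proof is not complete as written.

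The paper sidesteps this with a different mechanism. It only needs $\EX\times\Qu(\textup{Regen})>0$, which it obtains by (i) using reversibility and $S$-invariance to show that fresh points and exit points have equal, strictly positive probability, (ii) isolating the positive-probability event $H_{0,k}$ of Lemma \ref{lem:Hset}, and (iii) the surgery argument of Lemma \ref{lem:ShiftingParticlesMixing}: conditionally on $H_{0,k}$, with positive probability the clocks on an almost surely finite vertex set $N_k$ execute an explicit finite sequence of nearest-neighbour moves that carries the tagged particle from $x_0$ to $x_k$ without revisiting $x_0$ while reproducing the correct terminal configuration $\zeta_{\tau_k}$. Once $\textup{Regen}$ has positive measure, Poincar\'e's recurrence theorem for the measure-preserving shift $S$ on $\left( \text{PathsInTrees}, \mathcal{F}, \EX\times\Qu\right)$ upgrades this to infinitely many regeneration points, with no independence or Borel--Cantelli input at all. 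If you want to salvage your route, the missing ingredient is exactly such a positive-probability local surgery (or some other quantitative decoupling of the environment in the fresh subtree from the past), and at that point you would in effect be reproducing the paper's proof.
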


In order to show Lemma \ref{lem:RegenerationPoints}, observe that the event of having a regeneration point at $x_0$ can be written as the intersection of the event of having a \textbf{fresh point} at $x_0$
\begin{equation*}
\textup{Fresh} := \left\lbrace  (\overset{\leftrightarrow}{x},T) \in  \text{PathsInTrees} \text{ s.t. } \forall n \leq 0 \colon x_n \neq x_1  \right\rbrace 
\end{equation*}  and the event of having an \textbf{exit point} at $x_0$
\begin{equation*}
\textup{Exit} := \left\lbrace  (\overset{\leftrightarrow}{x},T) \in  \text{PathsInTrees} \text{ s.t. }  \forall n \geq 1 \colon x_n \neq x_{0} \right\rbrace \ .
\end{equation*} Using reversibility of the environment process with respect to $\Qu$ together with \eqref{eq:shiftInvariance}, we see that
\begin{equation}\label{eq:probabilityFreshExit}
\EX\times\Qu(\textup{Fresh})=\EX\times\Qu(\textup{Exit})
\end{equation} holds. Using the transience of the tagged particle together with Corollary \ref{cor:twoEnds}, we have $\EX\times\Qu$-almost surely infinitely many fresh points, i.e.\@ 
\begin{equation*}
\EX\times\Qu \left( \exists n\geq m \text{ s.t. } S^n(\overset{\leftrightarrow}{x},T) \in \textup{Fresh} \right) = 1
\end{equation*} holds for any $m\geq 0$. Since the probability of the event of having a fresh point at $x_0$ is invariant under shifts according to $S$, we conclude that the probabilities in \eqref{eq:probabilityFreshExit} must be strictly positive. Moreover, this shows that we have $\EX\times\Qu$-almost surely infinitely many exit points. For $m,n \in \mathbb{Z}$ with $m\leq n$, we define the event 
 \begin{equation*}
 H_{m,n} := \left\{S^m(\overset{\leftrightarrow}{x},T) \in \textup{Fresh}, S^n(\overset{\leftrightarrow}{x},T) \in \textup{Exit}, [x_{m},x_{n}] \cap \{ x_i, i \in \mathbb{Z}\setminus [m,n] \} = \emptyset\right\}  \ .
 \end{equation*} In words, $ H_{m,n}$ is the event that $x_m$ is a fresh point, $x_n$ is an exit point and the shortest path connecting $x_m$ and $x_n$ does not intersect the remaining trajectory.
\begin{lemma}\label{lem:Hset} There exists a $k \geq 0$ such that $\EX\times\Qu(H_{0,k}) > 0$ holds.
\end{lemma}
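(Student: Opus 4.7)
The plan is to locate, on a positive-probability subevent of $\EX \times \Qu$, a random time $k$ at which all three conditions in the definition of $H_{0,k}$ simultaneously hold, and then to use the shift-invariance \eqref{eq:shiftInvariance} to conclude $\EX \times \Qu(H_{0,k}) > 0$ for some deterministic $k \ge 0$. The inputs are the inequality $\EX \times \Qu(\textup{Fresh}) = \EX \times \Qu(\textup{Exit}) > 0$, the transience of the tagged particle (Proposition \ref{pro:transience}), and the fact from Corollary \ref{cor:twoEnds} that the backward and forward portions of $\overset{\leftrightarrow}{x}$ converge to two distinct rays $\xi^-, \xi^+ \in \partial(T,o)$.

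I would first condition on $\textup{Fresh}$ at time $0$: since $T$ is a tree, the backward trajectory is then confined to the component $T^-$ of $T \setminus \{x_0,x_1\}$ containing $x_0$, because reaching the other component would require visiting $x_1$. Combined with $\xi^- \neq \xi^+$, this forces the backward trace to meet the ray $\xi^+$ at only finitely many vertices. I would also impose the reverse-exit event $\{x_n \neq x_0 \colon n \le -1\} = S^{-1}(\textup{Exit})$, which has positive probability by the reversibility argument underlying \eqref{eq:probabilityFreshExit}, so that the backward trajectory additionally avoids $x_0$ at all strictly negative times. For a vertex $v$ on $\xi^+$ chosen far enough from $x_0$, the backward trace is then disjoint from the geodesic $[x_0,v]$.

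Next, using the positive speed of the tagged particle in the horodistance direction established in the proof of Proposition \ref{pro:transience}, I would argue that with positive probability the forward walk, starting at $x_1$, moves outward along $\xi^+$ through a prescribed finite sequence of distinct vertices ending at such a $v$, and thereafter commits to the subtree of $T$ beyond $v$, never returning. On this event, denoting by $k$ the hitting time of $v$, the segment $(x_0,\dots,x_k)$ is a simple path equal to $[x_0,v]$, the forward trajectory after time $k$ lies in a subtree of $T$ disjoint from $[x_0,v]$, and the backward trajectory (by the previous paragraph) misses $[x_0,v]$ entirely. Hence $H_{0,k}$ is realised. Shift-invariance \eqref{eq:shiftInvariance} then upgrades this a.s.\ existence on a positive-probability event to $\EX \times \Qu(H_{0,k_0}) > 0$ for some deterministic $k_0 \ge 0$.

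The main obstacle is the positive-probability forward excursion: while Remark \ref{rem:speed} and the proof of Proposition \ref{pro:transience} give positive speed along the horodistance direction on average, extracting from this a quantitative positive lower bound for a fully clean outward excursion along $\xi^+$ of prescribed length, followed by permanent commitment to the subtree beyond $v$, requires a local analysis of the environment process at the root together with a decoupling of past and future that should be read off from the independent sampling of the two trajectories in the definition of $\EX \times \Qu$.
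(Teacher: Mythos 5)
Your overall strategy --- exhibit a positive-probability configuration realising the three defining conditions of $H_{0,k}$ for some random $k$, then pass to a deterministic $k$ by countable additivity and measure preservation --- matches the paper's. The genuine gap lies in the event you choose to realise. You require the forward trajectory to traverse the geodesic $[x_0,v]$ cleanly in exactly $k$ steps and then commit permanently to the subtree beyond $v$. But $H_{0,k}$ asks for much less: only that $x_0$ is a fresh point, $x_k$ is an exit point, and the geodesic $[x_0,x_k]$ --- which is determined by the two endpoints, not by the path actually taken --- avoids the trajectory at times outside $[0,k]$. The walk may wander arbitrarily between times $0$ and $k$. By insisting on a clean outward excursion followed by no return, you have replaced the event to be bounded below by a far smaller one, and you do not bound its probability; you flag this yourself as ``the main obstacle''. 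Worse, positivity of that excursion event is essentially the statement that $x_0$ is a regeneration point, i.e.\ the content of Lemma \ref{lem:RegenerationPoints}, which the paper proves \emph{using} Lemma \ref{lem:Hset} together with the particle-rearrangement argument of Lemma \ref{lem:ShiftingParticlesMixing}. So your route is close to circular, or at least defers all of the real work to an unproved step.

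The paper's proof avoids controlling the excursion entirely. Let $a$ be the last common vertex of the two distinct limiting rays from Corollary \ref{cor:twoEnds}; by transience, $a$ is visited only finitely often, so at sufficiently negative times the trajectory lies in the component of $T\setminus\{a\}$ containing the tail of one ray and at sufficiently positive times in the component containing the tail of the other. Taking a fresh time $m$ before the first visit to $a$ and an exit time $n$ after the last visit (such times exist almost surely, since fresh and exit points occur infinitely often), the geodesic $[x_m,x_n]$ is disjoint from the trajectory outside $[m,n]$ with no constraint whatsoever on the path between times $m$ and $n$; then $\EX\times\Qu(H_{m,n})>0$ for some deterministic pair, and the shift yields $H_{0,n-m}$. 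Two smaller points: your identification of $\{x_n\neq x_0\colon n\le -1\}$ with $S^{-1}(\textup{Exit})$ is incorrect (one computes $S^{-1}(\textup{Exit})=\{x_n\neq x_1\ \forall n\ge 2\}$); the event you want is the time reversal of $\textup{Exit}$, and the positivity of its intersection with $\textup{Fresh}$ would still need a separate argument. Also, your claim that the backward trace misses $[x_0,v]$ implicitly uses that $x_1$ lies on the forward limiting ray, which again only holds on the unestablished clean-excursion event.
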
 
 \begin{proof}
Observe that for $\EX\times\Qu$-almost every $(\overset{\leftrightarrow}{x},T) \in \textup{PathsInTrees}$, the tagged particles in the corresponding environment processes converge to distinct rays $\xi_1,\xi_2 \in \partial (T,x_0)$. Let $a\in V(T)$ be the last common vertex of $\xi_1$ and $\xi_2$. Using transience, we observe that $a$ is hit almost surely only finitely often. We choose $m$ such that $
S^m(\overset{\leftrightarrow}{x},T) \in \textup{Fresh}$ with $a \notin \{ \dots, x_{m-1},x_m\}$ and $n$ such that $S^n(\overset{\leftrightarrow}{x},T) \in \textup{Exit}$ with $a \notin \{ x_{n},x_{n+1},\dots\}$. For these choices of $m$ and $n$, we have that $(\overset{\leftrightarrow}{x},T) \in H_{m,n}$. Note that we find such $m$ and $n$ for $\EX\times\Qu$-almost every element of $\textup{PathsInTrees}$. Hence, $\EX\times\Qu(H_{m,n}) > 0$ must hold for some deterministic choice of $m$ and $n$. Set $k=n-m$ and use the fact that we have a measure-preserving system to conclude. 
 \end{proof}

 \begin{figure}

\begin{center}
\begin{tikzpicture}[scale=0.74]


\def\z{0.3}

\path[draw,use Hobby shortcut,closed=true,fill=gray!10]
(2,2) .. (3,3) .. (3,5) .. (0.5,5) .. (1,4) .. (0.8,3); 

\foreach \x in {5,10,15} {

	
	\node[shape=circle,scale=1.2,draw] (A2) at (0+\x,0.5){} ;
 	\node[shape=circle,scale=1.2,draw] (B2) at (1.5+\x,1.5){} ;
	\node[shape=circle,scale=1.2,draw] (C2) at (3+\x,0.5) {};
	\node[shape=circle,scale=1.2,draw] (D2) at (1.5+\x,2.7){} ;
	\node[shape=circle,scale=1.2,draw] (DE2) at (1.5+\x,3.9){} ;
	\node[shape=circle,scale=1.2,draw] (DF2) at (2.7+\x,3.9){} ;
	\node[shape=circle,scale=1.2,draw] (E2) at (1.5+\x,4.8+\z){} ;
	

	\draw[thick] (A2) to (0.5+\x,0.5+0.333333);	
	\draw[thick,dotted] (0.5+\x,0.5+0.333333) to (1+\x,0.5+0.666666);
    \draw[thick] (1+\x,0.5+0.666666) to (B2);				
	\draw[thick] (B2) to (C2);		
	\draw[thick] (B2) to (D2);		
	\draw[thick] (D2) to (DE2);
	\draw[thick] (DF2) to (DE2);			
	\draw[thick] (DE2) to (E2);

	\draw[thick] (A2) to (0+\x,0);	
	\draw[thick] (C2) to (3+\x,0);	
	\draw[thick] (A2) to (-0.45+\x,0.8);	
	\draw[thick] (C2) to (3.45+\x,0.8);		
	\draw[thick] (DF2) to (3.05+\x,4.2);
	\draw[thick] (DF2) to (3.05+\x,3.6);		
	\draw[thick] (E2) to (1.95+\x,5.1+\z);	
	\draw[thick] (E2) to (1.05+\x,5.1+\z);

}
\node[scale=1]  at (2.7,3.1){$N$};

\node[scale=1]  at (2,4.4+\z){$x_0$};
\node[scale=1]  at (1.5,0.9){$x$};
\node[scale=1]  at (0.4,0.1){$v$};
\node[scale=1]  at (2,2.4){$x_k$};
\node[scale=1]  at (2.6,0.1){$w$};

\begin{scope}[shift={(3+5,0.5)},scale=1.47]
\clip (0,0) circle (0.17);
\node[shape=circle,scale=1,fill=red] (k7) at (0,0){};
 \foreach \i in {-8,...,8}
  \foreach \j in {-8,...,8}
\fill[red!5] (1/20*\j cm,1/20*\i cm) circle (0.018);
   \end{scope}

 \node[shape=circle,   fill=RoyalBlue] (k4) at (0+5,0.5){};

 \begin{scope}[shift={(3+10,0.5)},scale=1.47]
\clip (0,0) circle (0.17);
\node[shape=circle,scale=1,fill=red] (k7) at (0,0){};
 \foreach \i in {-8,...,8}
  \foreach \j in {-8,...,8}
\fill[red!5] (1/20*\j cm,1/20*\i cm) circle (0.018);
   \end{scope}

 \node[shape=circle,   fill=RoyalBlue] (k6) at (1.5+10,4.8+\z){}; 
 
 \node[scale=2.3] (arrowing) at (4.2,2.5){$\rightarrow$};
 \node[scale=2.3] (arrowing) at (4.2+5,2.5){$\rightarrow$};
 \node[scale=2.3] (arrowing) at (4.2+10,2.5){$\rightarrow$};

\begin{scope}[shift={(1.5+15,2.7)},scale=1.47]
\clip (0,0) circle (0.17);
\node[shape=circle,scale=1,fill=red] (k7) at (0,0){};
 \foreach \i in {-8,...,8}
  \foreach \j in {-8,...,8}
\fill[red!5] (1/20*\j cm,1/20*\i cm) circle (0.018);
   \end{scope}

 \node[shape=circle,   fill=RoyalBlue] (k8) at (1.5+15,4.8+\z){};

	\node[shape=circle,scale=1.2,draw] (A2) at (0,0.5){} ;
 	\node[shape=circle,scale=1.2,draw] (B2) at (1.5,1.5){} ;
	\node[shape=circle,scale=1.2,draw] (C2) at (3,0.5) {};
	\node[shape=circle,scale=1.2,draw] (D2) at (1.5,3.15-0.45){} ;
	\node[shape=circle,scale=1.2,draw] (DE2) at (1.5,3.9){} ;
		\node[shape=circle,scale=1.2,draw] (DF2) at (2.7,3.9){} ;
	\node[shape=circle,scale=1.2,draw] (E2) at (1.5,4.8+\z){} ; 
 
	\draw[thick] (A2) to (0.5,0.5+0.333333);	
	\draw[thick,dotted] (0.5,0.5+0.333333) to (1,0.5+0.666666);
    \draw[thick] (1,0.5+0.666666) to (B2);		
	\draw[thick] (B2) to (C2);		
	\draw[thick] (B2) to (D2);		
	\draw[thick] (D2) to (DE2);		
	\draw[thick] (DE2) to (E2);

	\draw[thick] (A2) to (0,0);	
	\draw[thick] (C2) to (3,0);	
	\draw[thick] (A2) to (-0.45,0.8);	
	\draw[thick] (C2) to (3.45,0.8);		
	\draw[thick] (DE2) to (2,3.9);	
	\draw[thick] (E2) to (1.95,5.1+\z);	
		\draw[thick] (DF2) to (DE2);		
		\draw[thick] (DF2) to (3.05,4.2);
	\draw[thick] (DF2) to (3.05,3.6);

\draw[->,thick] (1.5,7+\z) to  [curve through={(1.8,6.3+\z) (1.3,5.6+\z)}] (1.4,5.1+\z);
\draw[->,thick] (1.5,2.84-0.45) to  [curve through={(1.2,2.5-0.45) (0.8,2.4-0.45)}] (0,1.3);
\node[scale=1]  at (0,2.5){$\overset{\leftrightarrow}{x}$};

\begin{scope}[shift={(1.5,4.8+\z)},scale=1.47]
\clip (0,0) circle (0.17);
\node[shape=circle,scale=1,fill=red] (k1) at (0,0){};
 \foreach \i in {-8,...,8}
  \foreach \j in {-8,...,8}
\fill[red!5] (1/20*\j cm,1/20*\i cm) circle (0.018);
   \end{scope}

 \node[shape=circle,  fill=RoyalBlue] (k2) at (1.5,3.9){};

\end{tikzpicture}
\end{center}
\caption{\label{figureMixing}Construction of a regeneration point at $x_0$  in a sample of an augmented Galton--Watson tree  when $w$ and $x_k$ are neighbors of $x$ and $[x,v]$ as well as $[x,w]$ are empty. The tagged particle is drawn scattered in red. }
\end{figure}
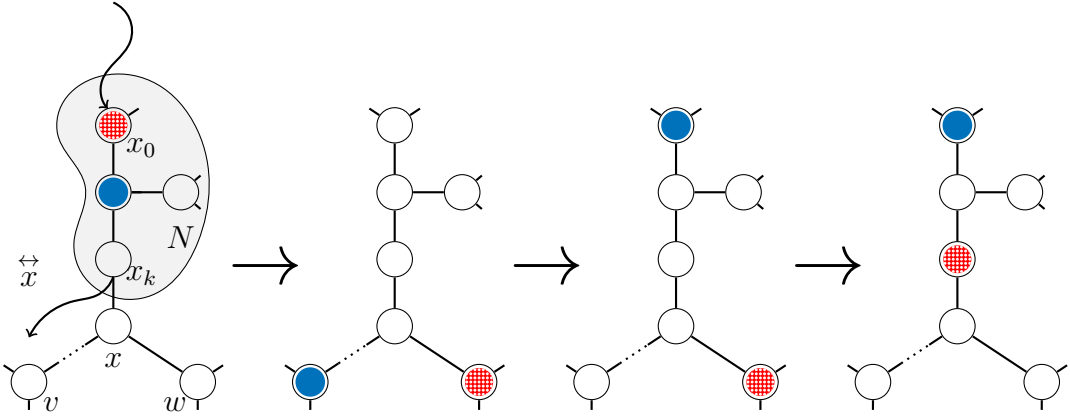

For a given configuration $(\overset{\leftrightarrow}{x},T) \in H_{0,k}$ with $k\geq 0$ of Lemma \ref{lem:Hset}, let $(T_t,o_t,\zeta_t)_{t \geq 0}$ be the underlying environment process in positive time direction. We recursively define a sequence of almost surely finite stopping times by $\tau_0:=0$ and
\begin{equation*}
\tau_{i} := \inf\{ t \geq  \tau_{i-1} \colon o_t \neq o_{\tau_{i-1}} \} 
\end{equation*} for all $i \geq 1$. We show that $(\overset{\leftrightarrow}{x},T) $ is contained in the set of regeneration points at $x_0$ with positive probability using a similar argument as in the proof of Lemma \ref{lem:ergodicityFixedTree}. 
Let $(T^{\prime}_t,o^{\prime}_t,\zeta^{\prime}_t)_{t\geq 0}$ be the environment process with the same initial configuration as $(T_t,o_t,\zeta_t)_{t\geq 0}$ but where all moves involving the tagged particle are suppressed. Using the graphical representation, note that $\zeta^{\prime}_{\tau_{k}}$ and $\zeta_{\tau_{k}}$ differ almost surely in at most finitely many values. Let $N$ denote the sites in the minimal spanning tree consisting of $\{x_0,x_1,\dots,x_k\}$ and all sites in which $\zeta^{\prime}_{\tau_{k}}$ and $\zeta_{\tau_{k}}$ differ. The proof of the following lemma uses similar arguments as the proof of Lemma \ref{lem:ergodicity}.
\begin{lemma}\label{lem:ShiftingParticlesMixing} For almost every configuration $(\overset{\leftrightarrow}{x},T)$ and $0/1$-colorings $\zeta^{\prime}_{\tau_{k}}$ and $\zeta_{\tau_{k}}$ differing only in sites within $N$, there exist $v,w,x \in V(T)$ with the following properties:
\begin{itemize}
\item[(i)\label{item1lem} ] $v,w,x \notin N$, $x_0 \notin [x,x_k]$ 
\item[(ii)\label{item2lem}] $\zeta_{\tau_{k}}(w)=0$.
\item[(iii)\label{item3lem}] $x_k,v$ and $w$ are located in pairwise different branches with respect to $x$ in $T$.
\item[(iv)\label{item4lem}] The path $[x,v]$ contains at least $|x-x_k|+k+1$ vacant sites.
\end{itemize}
\end{lemma}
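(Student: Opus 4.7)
The plan is to follow the blueprint of Lemma \ref{lem:ergodicity}, with the finite set $N$ taking the role previously played by $\{y,z\}$. First I would observe that $N$ is almost surely finite: the sites where $\zeta_{\tau_k}$ and $\zeta'_{\tau_k}$ differ arise only from chains of exclusion moves involving the tagged particle up to time $\tau_k$, and since $\tau_k < \infty$ almost surely and the graphical representation has locally finite rates on $T$, only finitely many sites are affected. In particular, the minimal spanning subtree $N$ has finitely many vertices.

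Next I would locate $x$. By $p_0 = 0$ and supercriticality of $\mu$, the subtree of $T$ attached to $x_k$ that avoids $x_0$ almost surely contains infinitely many vertices of degree at least $3$ at arbitrary distance from $N$. Choosing such a site $x$ gives $x \notin N$ and $x_0 \notin [x,x_k]$, i.e.\ property \hyperref[item1lem]{(i)}, and provides two branches $C = C(x,x_k)$ and $D = D(x,x_k)$ of $x$, both avoiding $x_k$ and disjoint from $N$. I would then show that the restriction of $\zeta_{\tau_k}$ to $V(T) \setminus N$ still has product law, Bernoulli-$\rho$ in the variable speed case and with $\nu_{\alpha,T}$-marginals in the constant speed case. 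Combined with the existence of infinite rays in each of $C$ and $D$, a Borel--Cantelli argument yields infinitely many vacant sites along any such ray. I would take $w$ to be the first vacant site on some ray in $C$ starting at $x$, which yields \hyperref[item2lem]{(ii)}, and $v$ to be a vacant site on some ray in $D$ chosen far enough out that the path $[x,v]$ contains at least $|x - x_k| + k + 1$ vacant sites; this is possible by the strong law of large numbers along the ray, since vacant sites occur with strictly positive density. Properties \hyperref[item3lem]{(iii)} and \hyperref[item4lem]{(iv)} then follow by construction.

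The main obstacle is the product-law claim for $\zeta_{\tau_k}$ on $V(T)\setminus N$: a naive appeal to invariance of $\pi_{\rho,T}$ or $\nu_{\alpha,T}$ is not sufficient, because both the stopping time $\tau_k$ and the set $N$ itself depend on the graphical data near $N$. The correct argument uses the fact that $\tau_k$, the trajectory $(x_0,\dots,x_k)$, and the enlarged discrepancy set $N$ are all measurable with respect to the Poisson clocks on edges incident to $N$ together with the initial configuration on $N$, so that conditioning on this data leaves the initial product measure on $V(T)\setminus N$ unchanged. Once this localization is in place, the remainder of the proof is a straightforward geometric and Borel--Cantelli exercise on supercritical Galton--Watson trees without leaves.
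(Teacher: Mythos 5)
Your geometric construction matches the paper's proof: since $N$ is almost surely finite, you pick $x$ with $\deg(x)\geq 3$ deep enough in a branch of $x_k$ avoiding $x_0$ so that $x\notin N$ and two further branches $C(x)$, $D(x)$ of $x$ miss $N$ and $[x,x_k]$, then take $w$ as the first vacancy in $C(x)$ and $v$ far enough out in $D(x)$. Your reduction of \hyperref[item4lem]{(iv)} is also fine once you know there are infinitely many vacant sites along a ray in $D(x)$; you do not need a strong law or a positive density of vacancies for this, only infinitely many of them.

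The gap is in your localization argument for the configuration at time $\tau_k$. You claim that $\tau_k$, the trajectory $(x_0,\dots,x_k)$ and the set $N$ are measurable with respect to the Poisson clocks on edges incident to $N$ together with the initial configuration on $N$. This is circular, since $N$ is defined through the outcome of the dynamics, and it is also false: whether the tagged particle may perform a given jump depends on whether the target site is occupied at that instant, which is determined by exclusion interactions reaching arbitrarily far into the tree, and the discrepancy set between $\zeta_{\tau_k}$ and $\zeta'_{\tau_k}$ is likewise produced by such chains. Hence the assertion that $\zeta_{\tau_k}$ restricted to $V(T)\setminus N$ is exactly a product measure is not established --- and it is not needed. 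The fact you actually use, namely that $C(x)$ and $D(x)$ each contain almost surely infinitely many vacant sites at time $\tau_k$, follows from a softer conservation argument: each branch is attached to the rest of $T$ through a single edge at $x$, whose clock rings only finitely often on the almost surely finite time interval $[0,\tau_k]$, so only finitely many particles can enter the branch; since at time $0$ the branch contains infinitely many vacancies almost surely (Borel--Cantelli under the product initial law), infinitely many vacancies persist at time $\tau_k$. With this substitution your proof closes and coincides with the one in the paper.
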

\begin{proof} Let $C(x_k)$ be a branch of $x_k$ which does not contain $x_0$. Using a Borel-Cantelli argument, there almost surely exists a site $x\in C(x_k)$ with $\deg(x)\geq 3$ which is not contained in the set $N$. Let $C(x)$ and $D(x)$ be two different branches of $x$ which are disjoint of $[x,x_k]$. Note that $C(x)$ and $D(x)$ are disjoint from the set $N$ and contain almost surely an infinite number of vacant sites. Let $w$ be the first site in $C(x)$ which is empty. Similarly, let $v$ be the first site in $D(x)$ such that condition \hyperref[item4lem]{(iv)} holds.
\end{proof}

\begin{proof}[Proof of Lemma \ref{lem:RegenerationPoints}] If Lemma \ref{lem:Hset} holds for $k=0$, we conclude by applying Poincar\'e's recurrence theorem. For $k\geq 1$, we will use Lemma \ref{lem:ShiftingParticlesMixing} to provide a way of transforming $\zeta^{\prime}_{\tau_{k}}$ into $\zeta_{\tau_{k}}$ by finitely many transitions, see Figure \ref{figureMixing} for a visualization. In this transformation, the tagged particle will not come back to $x_0$ once it has left its starting point. Let $N_k := N \cup [x_k,v] \cup [x_k,w]$ for $v,w$ from Lemma \ref{lem:ShiftingParticlesMixing}. We start by moving all particles on the sites $[x_0,x] \setminus \{ x_0\}$ into empty positions in $[x,v]\setminus \{ x\}$ using only nearest neighbor transitions in $N_k$ which do not involve $x_0$. In a next step, we push the tagged particle from $x_0$ towards $w$ along the path $[x_0,w]$, i.e.\@ for $\{ w_i, 1\leq i \leq n\}$ being successive vertices in $[w,x_0]$ with $\eta(w_i)=1$,  move the particle from $w_1$ to $w$, then the particle from $w_{2}$ to $w_{1}$ and so on. In particular, note that after this push, the tagged particle is contained in $[x,w]\setminus \{ x\}$. Next, we perform nearest neighbor moves involving only the sites $N \cup [x_k,v]$ such that $z \in N \setminus [x_k,x]$ is occupied if and only if $\zeta_{\tau_{k}}(z)=1$ holds and all sites in $[x_k,x]$ are  empty. We now push the particles along the path $[w,x_k]$ towards $x_k$ in the same way as described before. Note that at this point, the tagged particle is located in $x_k$ and the constructed configuration  may differ from $\zeta_{\tau_{k}}$ only at sites $[x_k,v] \setminus \{ x_k\}$. Since the number of particles in $N_k$ is preserved, we can now perform nearest neighbor moves using only the particles in 
$[x_k,v] \setminus \{ x_k\}$ to obtain the configuration $\zeta_{\tau_{k}}$. 
Note that for almost every pair of configurations $\zeta^{\prime}_{\tau_{k}}$ and $\zeta_{\tau_{k}}$, this provides a way of transforming $\zeta^{\prime}_{\tau_{k}}$ into $\zeta_{\tau_{k}}$ modifying the exclusion process only between times $0$ and $\tau_k$ on an almost surely finite set of vertices $N_k$. Thus, under the measure $\EX\times\Qu(\ . \ | H_{0,k})$, with a positive probability all transitions among the sites $N_k$ follow precisely the above described transformation from $\zeta_0$ to $\zeta_{\tau_{k}}$ between times $0$ and $\tau_k$. Since in this case, we have by construction a regeneration point at $x_0$, we conclude that
\begin{equation*}
\EX\times\Qu(\textup{Regen} | H_{0,k})>0 
\end{equation*} holds. Using Lemma \ref{lem:Hset}, we conclude by applying Poincar\'e's recurrence theorem.  
\end{proof} 

\begin{proof}[Proof of Lemma \ref{lem:mixingTreeComponent}] We follow similar arguments as in the proof of Proposition 17.12 in \cite{LP:ProbOnTrees}. 
For a tree $(T,o) \in \mathcal{T}$ and $x \in V(T)$, let $T^x$ denote the subtree of $(T,o)$ rooted at $x$ (it contains the sites which become disconnected from $o$ when $x$ is removed). For $(\overset{\leftrightarrow}{x},T) \in \textup{Regen}$, let the \textbf{first return time} $n_{\text{Regen}}$ be given as
\begin{equation*}
n_{\text{Regen}}(\overset{\leftrightarrow}{x},T) := \inf \left\lbrace n > 0 \colon S^n(\overset{\leftrightarrow}{x},T) \in \textup{Regen}\right\rbrace 
\end{equation*}  and note that $n_{\text{Regen}}$ is almost surely finite. For $n=n_{\text{Regen}}$, we define the associated \textbf{slab} to be
\begin{equation*}
\textup{Slab}(\overset{\leftrightarrow}{x},T) := \left( \left(x_0, \dots, x_{n-1}\right),T \setminus \left( T^{x_{-1}} \cup T^{x_{n}}\right)\right) 
\end{equation*} and set $S_{\text{Regen}}:= S^{n_\text{Regen}}$. This yields an i.i.d.\ sequence $\left(\textup{Slab}(S^k_{\text{Regen}}(\overset{\leftrightarrow}{x},T))\right)_{k \in \mathbb{Z}}$ generating $(\overset{\leftrightarrow}{x},T)$. Recall that we have to verify mixing in the tree-component only for Borel-sets $C,D \subseteq T$ which take into account a finite range of the tree around the root. Since the tagged particle is transient, we see that for all $t$ sufficiently large, the events $\{(T_0,o_0) \in C \}$ and $\{(T_t,o_t) \in D \}$ depend on disjoint sets of slabs. This gives Lemma \ref{lem:mixingTreeComponent} and hence also Proposition \ref{pro:ergodicity}.
\end{proof}

\section{Speed of the tagged particle}\label{sec:speed}

Combining the results of the previous sections, we have all ingredients to prove Theorem \ref{thm:LLN}. As pointed out in Remark \ref{rem:speed}, we will use the arguments given in Section \ref{sec:transience} for showing transience of the tagged particle in order to determine the almost-sure speed of the tagged particle with respect to $P_{\Pu}$ and $P_{\Pa}$.
Recall from Lemma \ref{lem:martingaleRelation} that we can rewrite the horodistance of the tagged particle in terms of the environment process in a ball of radius $1$ around its root and a martingale.  Using the results of Sections \ref{sec:environment} and \ref{sec:ergodicity}, we obtain the following lemma as an analogue of Corollaries 4.5 and 4.16 in \cite[Chapter III]{L:Book2}.

\begin{lemma} \label{lem:martingaleErgodicity} For any $\rho \in (0,1)$ in the variable speed model and for any $\alpha \in (0,\infty)$ in the constant speed model of the simple exclusion process, the martingales $(M^{\textup{v}}_t)_{t \geq 0}$ and $(M^{\textup{c}}_t)_{t \geq 0}$ in Lemma \ref{lem:martingaleRelation} have stationary and ergodic increments.
 \end{lemma}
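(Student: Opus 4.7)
The plan is to express both martingale increments as the same functional of the shifted environment process, and then pull back stationarity and ergodicity from the results of Sections \ref{sec:environment} and \ref{sec:ergodicity}. I will carry out the argument in detail for $(M^{\textup{v}}_t)_{t\geq 0}$; the argument for $(M^{\textup{c}}_t)_{t\geq 0}$ is identical after replacing $\Qu$ and $\psi^{\textup{v}}$ by $\Qa$ and $\psi^{\textup{c}}$.

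The first and key step is to use the consistency property \eqref{eq:Horodistance} of the horodistance under shifts of the root. It yields
\begin{equation*}
\langle o_{s+t}\rangle_{(T_0,o_0)} - \langle o_s\rangle_{(T_0,o_0)} = \langle o_{s+t}-o_s\rangle_{(T_0,o_0)} = \langle o_{s+t}\rangle_{(T_0,o_s)},
\end{equation*}
so that
\begin{equation*}
M^{\textup{v}}_{s+t} - M^{\textup{v}}_s = \langle o_{s+t}\rangle_{(T_0,o_s)} - \int_0^{t} \psi^{\textup{v}}(T_{s+r},o_{s+r},\zeta_{s+r})\dif r.
\end{equation*}
Defining the time shift $\Theta_s$ on path space by $(\Theta_s\omega)_r := (T_{s+r},o_{s+r},\zeta_{s+r})$, the right-hand side is exactly the functional $F_t(\omega) := \langle o_t\rangle_{(T_0,o_0)} - \int_0^t \psi^{\textup{v}}(T_r,o_r,\zeta_r)\dif r$ evaluated at $\Theta_s\omega$, i.e. $M^{\textup{v}}_{s+t}-M^{\textup{v}}_s = F_t\circ \Theta_s$. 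In particular, for any finite collection $0\leq t_1 < \dots < t_n$, the increments $(M^{\textup{v}}_{s+t_i}-M^{\textup{v}}_s)_{i\leq n}$ are the same measurable functionals of $\Theta_s\omega$ as $(M^{\textup{v}}_{t_i})_{i\leq n}$ are of $\omega$.

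Stationarity of the increments now follows from Proposition \ref{pro:invariance}: since $\Qu$ is invariant for the environment process, the law of $\Theta_s\omega$ under the path measure starting from $\Qu$ coincides with that of $\omega$ for every $s\geq 0$. Hence all finite-dimensional distributions of $(M^{\textup{v}}_{s+t}-M^{\textup{v}}_s)_{t\geq 0}$ agree with those of $(M^{\textup{v}}_{t})_{t\geq 0}$, which is the desired stationarity.

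For ergodicity of the increments, suppose $A$ is a Borel subset of the increment-path space that is invariant under the shift on increments. Then $F^{-1}(A) := \{\omega : (F_t(\Theta_s\omega))_{t\geq 0}\in A \text{ iff } (F_t(\omega))_{t\geq 0}\in A\}$ translates into a $\Theta_s$-invariant event on the environment path space. By Proposition \ref{pro:ergodicity}, the environment process started from $\Qu$ is ergodic, so this event has probability $0$ or $1$ under the path measure, which forces the increment process to be ergodic as well. The main obstacle here is really bookkeeping: once \eqref{eq:Horodistance} is invoked to write $M^{\textup{v}}_{s+t}-M^{\textup{v}}_s$ as a shifted functional of the environment process, the proof reduces to a standard transfer of stationarity and ergodicity from a Markov process to an additive functional of it. Exactly the same reasoning, with $\Qa$ and $\psi^{\textup{c}}$ replacing $\Qu$ and $\psi^{\textup{v}}$, gives the result for $(M^{\textup{c}}_t)_{t\geq 0}$.
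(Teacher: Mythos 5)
Your proposal is correct and follows essentially the same route as the paper: both express the increment $M^{\textup{v}}_{t}-M^{\textup{v}}_s$ as a fixed functional of the environment path on $[s,t]$ (using the consistency of the horodistance under root shifts) and then transfer stationarity and ergodicity from Propositions \ref{pro:invariance} and \ref{pro:ergodicity} to this functional. The only cosmetic issue is that your set $F^{-1}(A)$ is not written as a genuine preimage, but the intended (and standard) argument --- shift-invariant events of the increment process pull back to shift-invariant events of the ergodic environment path measure --- is clear and valid.
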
 
 \begin{proof} We only prove the case of the variable speed model. For each $t\geq 0$, the random variable $\langle o_t \rangle_{(T_0,o_0)}$ can be expressed as a function $F_t$ of $\{(T_s,o_s,\zeta_s), 0 \leq s \leq t\}$ by following the shifts of the root, i.e.\@ it holds that 
 \begin{equation*}
\langle o_t \rangle_{(T_0,o_0)} - \langle o_0 \rangle_{(T_0,o_0)} = F_t((T_s,o_s,\zeta_s), 0 \leq s \leq t)  \ .
\end{equation*} Since the environment process is a stationary process when starting from $\Qu$, we have that
 \begin{equation*}
 M^{\textup{v}}_t - M^{\textup{v}}_s = F_{t-s}((T_r,o_r,\zeta_r), s \leq r \leq t) + \int_{s}^{t} \psi^{\textup{v}}(T_r,o_r,\zeta_r) \dif r
 \end{equation*}
holds for all $s<t$. From Propositions \ref{pro:invariance} and \ref{pro:ergodicity} we know that $\Qu$ is a stationary and ergodic measure for the environment process and so the claimed statement follows. 
 \end{proof}
 
 \begin{proof}[Proof of Theorem \ref{thm:LLN}] Using Proposition \ref{pro:ergodicity} and Lemma \ref{lem:martingaleErgodicity}, we can apply the ergodic theorem for both terms on the right-hand side of \eqref{eq:martingaleRelationvariable}, respectively, to see that
 \begin{equation*}
 \lim_{t \rightarrow \infty} \frac{\langle o_t\rangle_{(T_0,o_0)} }{t} =  (1-\rho) \E\left[ \frac{Z-1}{Z+1}\right] \left(\E\left[ \frac{1}{Z+1}\right]\right)^{-1} 
 \end{equation*} holds almost surely for $\Qu$-almost every initial configuration in the variable speed model and $\rho \in (0,1)$. Similarly, 
  \begin{equation*}
 \lim_{t \rightarrow \infty} \frac{\langle o_t\rangle_{(T_0,o_0)} }{t} =  \E\left[ \frac{Z-1}{Z+1}\frac{1}{\alpha(Z+1)+1}\right] 
 \end{equation*} holds almost surely for $\Qa$-almost every initial configuration in the constant speed model and $\alpha > 0$. Recall that the measures $\Qu$ and $\Pu$, respectively $\Qa$ and $\Pa$, are equivalent for all $\rho \in (0,1)$ and $\alpha \in (0,\infty)$. Since $(o_t)_{t \geq 0}$ describes the position of the tagged particle within the environment process, we conclude that
\begin{equation*}
 \lim_{t \rightarrow \infty} \frac{\langle \Xv_t\rangle_{(T_0,o_0)} }{t} =  (1-\rho) \E\left[ \frac{Z-1}{Z+1}\right] \left(\E\left[ \frac{1}{Z+1}\right]\right)^{-1} 
 \end{equation*} holds $P_{\Pu}$-almost surely for $\Pu$-almost every initial configuration in the variable speed model and all $\rho \in (0,1)$. Similarly, 
\begin{equation*}
 \lim_{t \rightarrow \infty} \frac{\langle \Xc_t\rangle_{(T_0,o_0)} }{t} =  \E\left[ \frac{Z-1}{Z+1}\frac{1}{\alpha(Z+1)+1}\right] 
 \end{equation*} holds $P_{\Pa}$-almost surely for $\Pa$-almost every initial configuration in the constant speed model and all $\alpha \in (0,\infty)$. 
Note that in both models of the simple exclusion process, the tagged particle converges almost surely to a ray $\xi^{\prime} \in \partial(T_0,o_0)$ different from $\xi = \xi(T_0,o_0)$. Let $a$ denote the last common vertex of $\xi$ and $\xi^{\prime}$ in the variable speed model and observe that
 \begin{equation*}
|\Xv_t| = \langle \Xv_t \rangle_{(T_0,o_0)} + 2 |a| 
\end{equation*} holds for all $t\geq 0$ sufficiently large. A similar statement is true for the tagged particle in the constant speed model.  We conclude since $|a|$ does not depend on $t$.
\end{proof}

\section{Open problems} In this article, we consider the speed of a tagged particle when the particles perform simple random walks under an exclusion rule on augmented Galton--Watson trees without leaves. However, it is a natural question to consider Galton--Watson trees which may also die out.
\begin{conjecture} On supercritical Galton--Watson trees conditioned on survival, the tagged particles in the constant speed model and in the variable speed model of the simple exclusion process have almost surely a positive linear speed.
\end{conjecture}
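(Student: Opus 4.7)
The plan is to reduce to the leafless setting via the standard backbone decomposition. Conditioned on survival, a supercritical Galton--Watson tree $(T,o)$ decomposes canonically into its \emph{backbone} $T^{\textup{bb}}$, namely the subtree of vertices with an infinite line of descent, together with finite subcritical \emph{bushes} hanging off backbone vertices. It is classical that $T^{\textup{bb}}$ is itself a Galton--Watson tree whose offspring distribution $\tilde{\mu}$ satisfies $\tilde{\mu}(0)=0$ and $\tilde{\mathfrak{m}}:=\sum_{k\geq 1}k\tilde{\mu}(k)\in(1,\infty)$, so that $T^{\textup{bb}}$, suitably augmented at the root, falls under the hypothesis of Theorem \ref{thm:LLN}. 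I would fix a ray $\xi(T,o)\in\partial(T^{\textup{bb}},o)$ lying entirely in the backbone and try to show, following the program of Sections \ref{sec:environment}--\ref{sec:speed}, that the tagged particle has strictly positive linear speed with respect to the horodistance along $\xi$, which immediately yields positive linear speed in the graph metric.

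On the side of invariant measures, the product measures $\pi_{\rho,T}$ and $\nu_{\alpha,T}$ remain well-defined and invariant on any locally finite tree, and the proof of Proposition \ref{pro:invariance} uses only the semi-direct product structure together with the joining-of-subtrees argument; both ingredients remain valid when $\mu(0)>0$ once one allows empty decompositions in \eqref{eq:TreeDecompostion}. Consequently, the natural analogues of $\Qu$ and $\Qa$, obtained by weighting $\AGW$ conditioned on survival by $1/\deg(o)$, respectively by $1/(\alpha\deg(o)+1)$, should remain reversible for the environment process. Computing $\E[\psi^{\textup{v}}(T_0,o_0,\zeta_0)]$ against these measures, one notices that bush neighbors of a backbone vertex contribute with positive sign in the horodistance (they lie on the side of $x$ away from $\xi$), so the drift is at least as large as in the corresponding leafless calculation and remains strictly positive after a careful accounting against $\tilde{\mu}$ and the subcritical bush-attachment distribution.

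The main obstacle is controlling the time the tagged particle spends inside bushes, a phenomenon absent from the leafless setting. When the tagged particle enters a finite bush it is confined to an almost surely finite random subtree, but the sojourn time is random and must not dominate the backbone displacement. I would prove exponential tail bounds on bush sojourns, uniformly over the environment, via a Foster--Lyapunov argument using the distance to the backbone as Lyapunov function, together with the classical fact that sizes of subcritical Galton--Watson trees have exponentially decaying tails under a mild moment assumption on $\mu$. Combined with stationarity under $\Qu$ (resp. $\Qa$), this shows that the fraction of time the particle spends off the backbone converges to a deterministic constant strictly less than one, so the positive backbone drift translates into a strictly positive linear speed via the martingale decomposition of Lemma \ref{lem:martingaleRelation}.

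Ergodicity should finally be obtained by adapting Section \ref{sec:ergodicity}: one strengthens the regeneration points of Lemma \ref{lem:RegenerationPoints} to backbone regeneration points with the additional property that no subsequent bush excursion off that vertex ever returns to it, and verifies that such enhanced regenerations still have positive density under $\EX\times\Qu$. The hardest step, and the one most likely to require genuinely new ideas, is the quantitative tail bound on bush sojourns uniformly in the random environment, since the bushes are themselves random with depth distribution depending on $\mu$ and one must track both the internal exclusion dynamics and the re-entry statistics; beyond that, the scheme of Sections \ref{sec:environment}--\ref{sec:speed} should carry through with essentially cosmetic modifications, and the explicit speed formula will involve $\tilde{\mu}$ in place of $\mu$.
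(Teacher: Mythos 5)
This statement is posed in the paper only as a conjecture in the final section; there is no proof to compare against, so your programme has to be judged on its own terms, and as it stands it has two genuine gaps. First, the reversibility of the environment measures does not carry over the way you assert. The proof of Proposition \ref{pro:invariance} computes $\Qu(A)$ in \eqref{thelongonevar} by exploiting that the subtrees attached at $o$ and $o^{\prime}$ are i.i.d.\ with law $\GW\times\pi_{\rho,T}$; conditioning the whole tree on survival makes these subtrees dependent (at least one of them must be infinite), so that computation is no longer available. What survives is that non-extinction is an invariant event for the environment process, so one may condition the \emph{unconditioned} reversible measure $\Qu$ on it; but then the speed is $E_{\Qu}[\psi^{\textup{v}}\mid \text{survival}]$, and proving this is strictly positive is the real content of the conjecture. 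Your heuristic that bush neighbours contribute $+1$ to the horodistance is correct but misleading: the unconditioned drift is still $(1-\rho)\,\E\bigl[\tfrac{Z-1}{Z+1}\bigr]\bigl(\E\bigl[\tfrac{1}{Z+1}\bigr]\bigr)^{-1}$, which is \emph{negative} for some supercritical $\mu$ with $p_0>0$ (e.g.\ $p_0=0.4$, $p_4=0.6$ gives $\E[\tfrac{Z-1}{Z+1}]=-0.04$), so all of the positivity must come from the conditioning, which destroys exactly the product structure you are computing against. Relatedly, the paper's transience argument hinges on every term $(k-2)$ with $k\ge 2$ being non-negative; once leaves contribute $k=1$ that contradiction argument collapses, and transience itself becomes open.

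Second, the claimed exponential tail bounds on bush sojourn times ``uniformly over the environment'' cannot hold: bush sizes are unbounded, a bush of size $n$ forces a sojourn of order at least $n$ even for a single particle, the exponential decay of bush sizes requires a moment assumption on $\mu$ the conjecture does not make, and under exclusion the tagged particle can be blocked deep inside a crowded bush; moreover the tagged particle is not a Markov chain, so a Foster--Lyapunov bound on its distance to the backbone is not available off the shelf. In the framework of this paper such sojourn estimates are in any case not the right tool: once a stationary \emph{ergodic} measure for the environment process is identified, time spent in bushes is handled automatically by the ergodic theorem together with Lemma \ref{lem:martingaleRelation}, and the entire difficulty concentrates in (a) identifying that measure on the survival event and (b) showing that the drift is strictly positive under it. Your proposal contains no mechanism for either point, so the gap is essentially the whole conjecture rather than a technical refinement of the leafless argument.
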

Another extension of our model is to consider random walks with different transition probabilities.
\begin{question} What is the speed of a tagged particle when the particles perform a biased random walk on augmented Galton--Watson trees under an exclusion rule?
\end{question} 
A classical problem for exclusion processes is the question if the tagged particle satisfies a central limit theorem. In the case where the augmented Galton--Watson tree is a $d$-regular tree with $d\geq 3$, a central limit theorem holds, see \cite[Theorem 1.3]{CCGS:SpeedTree}.
\begin{conjecture} For any $\rho \in (0,1)$, there exists a constant $\sigma_{\textup{v}}=\sigma_{\textup{v}}(\mu,\rho) \in (0,\infty)$ such that on almost every supercritical augmented Galton--Watson tree without leaves, the tagged particle $(\Xv_t)_{t \geq 0}$ in the variable speed model satisfies
\begin{equation*}
\frac{|\Xv_t|- E_{\Pu}[|\Xv_t|]}{\sqrt{t}} \overset{(d)}{\longrightarrow} \mathcal{N}(0,\sigma_{\textup{v}}^2) \ .
\end{equation*} Similarly, for every $\alpha \in (0,\infty)$, there exists a constant $\sigma_{\textup{c}}=\sigma_{\textup{c}}(\mu,\alpha) \in (0,\infty)$ such that on almost every supercritical augmented Galton--Watson tree without leaves, the tagged particle $(\Xc_t)_{t \geq 0}$ in the constant speed model satisfies
\begin{equation*}
\frac{|\Xc_t|- E_{\Pa}[|\Xc_t|]}{\sqrt{t}} \overset{(d)}{\longrightarrow} \mathcal{N}(0,\sigma_{\textup{c}}^2) \ .
\end{equation*}

\end{conjecture}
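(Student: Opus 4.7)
The plan is to combine the martingale decomposition of Lemma~\ref{lem:martingaleRelation} with the regeneration structure from Section~\ref{sec:ergodicity}, following the blueprint that is by now standard for random walks on Galton--Watson trees but executed here in the exclusion setting. As in the closing paragraph of the proof of Theorem~\ref{thm:LLN}, almost surely for all sufficiently large $t$ the graph distance $|\Xv_t|$ differs from the horodistance $\langle \Xv_t\rangle_{(T_0,o_0)}$ only by a deterministic additive constant $2|a|$, so this $O(1)$ correction is negligible at the $\sqrt t$ scale and it suffices to prove a CLT for $\langle o_t\rangle_{(T_0,o_0)}$ under the stationary and ergodic law $\Qu$ of the environment process. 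Equivalence of $\Qu$ and $\Pu$, together with the fact that conditionally on $(T,o)$ the exclusion dynamics has the same law under both measures, then transfers the CLT to $\Pu$, and the constant-speed case is entirely analogous.

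Writing $\langle o_t\rangle_{(T_0,o_0)} = \int_0^t \psi^{\textup{v}}(T_s,o_s,\zeta_s)\dif s + M^{\textup{v}}_t$, the proof splits into two pieces. For the martingale part, Lemma~\ref{lem:martingaleErgodicity} provides stationary and ergodic increments; each jump of $M^{\textup{v}}_t$ has magnitude one and occurs at total rate $\deg(o_t)$, so under the additional moment assumption $\E[Z^2]<\infty$ one obtains $E_{\Qu}[\langle M^{\textup{v}}\rangle_1]<\infty$ and Brown's martingale CLT yields $M^{\textup{v}}_t/\sqrt t \Rightarrow \mathcal{N}(0,\sigma_M^2)$ under $\Qu$. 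For the centered drift integral, I would use the regeneration times $\tau_k$ of Lemma~\ref{lem:RegenerationPoints} to rewrite $\int_0^t(\psi^{\textup{v}}-E_{\Qu}[\psi^{\textup{v}}])\dif s$ as a renewal-reward process whose block increments form an i.i.d.\ sequence after the first; provided these increments have finite variance, Anscombe's theorem supplies a CLT at rate $\sqrt t$ for this term. Combining the two limits into a single joint Gaussian limit reduces to verifying that the covariance of the martingale and the centered drift integral, computed block by block over the regeneration structure, is finite, and identifying $\sigma_{\textup{v}}^2$ as the resulting sum.

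The hard part is quantitative control of the regeneration blocks. For the simple random walk on supercritical Galton--Watson trees the tails of $\tau_{k+1}-\tau_k$ are well understood and possess moments of all orders (see the references in Section~\ref{sec:related}), but in the exclusion setting the regeneration points of the \emph{tagged particle} are not renewal points of the full environment: other particles may have crossed the regeneration edge in both directions before $\tau_k$, so the configuration on the unexplored subtree need not be independent of the past. The most delicate step of the proof would be to show, likely via a coupling argument exploiting the reversibility established in Proposition~\ref{pro:invariance} together with a locality estimate for the exclusion process, that conditionally on the regeneration event the environment beyond the regeneration edge still has the product law $\pi_{\rho,T}$ (respectively $\tilde\nu_{\alpha,T}$ in the constant-speed case), so that the regeneration blocks are genuinely i.i.d. One then needs polynomial (or exponential) tail bounds on the block length and on the horodistance gained per block, sharp enough to yield the required second moment. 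Once these ingredients are in place, assembling the CLT from the two pieces above is largely a routine matter.
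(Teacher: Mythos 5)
The statement you are trying to prove is not a theorem of the paper at all: it appears in the final section as an open problem, explicitly labelled a conjecture, and the paper offers no proof of it. So the only meaningful question is whether your argument actually closes the problem, and it does not. What you have written is a programme rather than a proof, and the steps you yourself flag as ``the most delicate'' are exactly the obstructions that presumably led the authors to leave this open. Concretely: Lemma~\ref{lem:RegenerationPoints} only establishes that regeneration points of the tagged particle's trajectory occur infinitely often, and the paper uses them solely to obtain mixing in the \emph{tree} component (Lemma~\ref{lem:mixingTreeComponent}); it does not produce an i.i.d.\ block decomposition of the full environment process, because the particle configuration on the subtree beyond a regeneration edge is correlated with the past through all the other particles, which are free to cross that edge in both directions. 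Your proposed remedy --- a coupling showing that, conditionally on the regeneration event, the unexplored environment still carries the product law $\pi_{\rho,T}$ --- is precisely the missing idea, and you do not supply it; reversibility of $\Qu$ (Proposition~\ref{pro:invariance}) does not by itself give it, since the regeneration event depends on the entire past and future of the tagged particle and conditioning on it destroys the product structure. Even granting genuinely i.i.d.\ blocks, you assume without proof that block lengths and block displacements have finite second moments; no tail estimate for $\tau_{k+1}-\tau_k$ is available in the exclusion setting, and this cannot be borrowed from the single-random-walk literature. Finally, the conjecture asserts $\sigma^2>0$, and your sketch never addresses non-degeneracy of the limiting variance.

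Two smaller remarks. First, the extra hypothesis $\E[Z^2]<\infty$ you introduce for the martingale part is unnecessary: under $\Qu$ the natural size-bias of the root degree is cancelled by the factor $1/\deg(o)$ in \eqref{def:UGWviaAGW}, so $E_{\Qu}[\deg(o_0)]=\left(\E\left[\frac{1}{Z+1}\right]\right)^{-1}<\infty$ holds automatically and with it $E_{\Qu}[\langle M^{\textup{v}}\rangle_1]<\infty$; the martingale CLT for stationary ergodic increments (via Lemma~\ref{lem:martingaleErgodicity}) is indeed the unproblematic half of the decomposition in Lemma~\ref{lem:martingaleRelation}. Second, since the environment process is reversible with respect to $\Qu$, an alternative and arguably more promising route for the additive functional $\int_0^t(\psi^{\textup{v}}-E_{\Qu}[\psi^{\textup{v}}])\dif s$ is the Kipnis--Varadhan theory for reversible Markov processes rather than a renewal-reward argument; but there too one would have to verify the $H_{-1}$ condition for $\psi^{\textup{v}}$ and, again, positivity of the variance. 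Either way, the conjecture remains open as far as your argument goes.
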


\appendix 
\section{Appendix}

In this section, we show that the simple exclusion process on Galton--Watson trees exists and is a Feller process. 
For a general introduction to Feller processes, we refer to Liggett \cite{L:interacting-particle}. In particular, Theorem 3.9 in \cite[Chapter I]{L:interacting-particle} shows that simple exclusion processes on graphs with uniformly bounded degree and uniformly bounded transition rates are Feller processes. We will now consider a more general set of underlying graphs which will include Galton--Watson trees.
Let $G=(V,E,o)$ be an infinite, locally finite, connected, rooted graph and let $p_G \in [0,1]$ denote the critical value for bond percolation on $G$, i.e.\@
\begin{equation*}
p_G := \sup\left\{ p\geq 0 \colon P_p( o \text{ is contained in an infinite open cluster})= 0 \right\}
\end{equation*} where $P_p$ denotes the law of Bernoulli bond percolation on $G$ with parameter $p$. 
\begin{proposition}\label{pro:PercolationExistence} Let $G$ be such that $p_G>0$ holds. Assume that the transition rates $\{p(x,y)\}_{x,y \in V}$ are only strictly positive for nearest neighbors $x,y$ and uniformly bounded from above by some constant $C>0$. Then the exclusion process on $G$ exists and is a Feller process with generator given in \eqref{def:generatorExclusionProcess}.
\end{proposition}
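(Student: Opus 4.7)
The plan is to use Harris's graphical construction for the exclusion process and to exploit the assumption $p_G > 0$ in order to obtain, on short time intervals, a decomposition of the dynamics into independent finite-state Markov chains. First I would place independent Poisson processes $\mathcal{N}_{x,y}$ of rate $p(x,y)$ on each ordered pair of adjacent vertices $(x,y)$, and declare that at each event time of $\mathcal{N}_{x,y}$, the values at $x$ and $y$ are swapped if and only if $\eta(x)=1$ and $\eta(y)=0$. This construction couples all initial configurations on a single probability space.

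Next, using $p_G > 0$ together with the uniform bound $p(x,y) \leq C$, I would fix $t_0 > 0$ small enough that $1 - e^{-2Ct_0} < p_G$ and consider the random subgraph $G_{t_0} \subseteq G$ containing all edges $\{x,y\}$ for which at least one of $\mathcal{N}_{x,y}, \mathcal{N}_{y,x}$ has fired in $[0,t_0]$. Each edge is in $G_{t_0}$ independently with probability $1-e^{-(p(x,y)+p(y,x))t_0} \leq 1-e^{-2Ct_0} < p_G$, so $G_{t_0}$ is stochastically dominated by subcritical Bernoulli bond percolation on $G$, and the connected component $C_v$ of any fixed vertex $v$ in $G_{t_0}$ is almost surely finite. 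On each such cluster the dynamics is an ordinary continuous-time Markov chain on a finite state space, and the exclusion rule does not couple distinct clusters. This defines $(\eta_s)_{0\leq s\leq t_0}$ pathwise as a measurable function of $\eta_0$ and the Poisson data; iterating on successive intervals of length $t_0$ extends the construction to all times $t\geq 0$.

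To verify the Feller property, take a cylinder function $f$ with support in a finite set $A$. By construction, $f(\eta_{t_0})$ depends on $\eta_0$ only through the values on the almost surely finite random set $\bigcup_{v \in A} C_v$, so conditionally on the Poisson data the map $\eta \mapsto f(\eta_{t_0})$ is a cylinder function on the compact space $\{0,1\}^V$. Dominated convergence then gives continuity of $S(t_0) f(\eta) = \mathbb{E}[f(\eta_{t_0}) \mid \eta_0 = \eta]$, and iteration yields $S(t)f \in C(\{0,1\}^V)$ for all $t\geq 0$, with strong continuity at $t=0$ following from $\mathbb{P}(\eta_t(x) \neq \eta_0(x)) \to 0$ pointwise. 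To identify the generator with the closure of \eqref{def:generatorExclusionProcess}, I would differentiate $S(t)f$ at $t=0$ for cylinder $f$, observing that only Poisson events on edges incident to the support of $f$ contribute to first order, and then invoke the Hille--Yosida theorem with cylinder functions playing the role of a core. The main obstacle is precisely this last step: since $G$ may have unbounded degree, one must check absolute convergence of the sum in \eqref{def:generatorExclusionProcess} on cylinder functions (immediate, as only finitely many edges touch the support of $f$) and, more delicately, that cylinder functions do form a core for the generator, which is where the finite-cluster decomposition of step two is essential and where the argument genuinely goes beyond the bounded-degree setting of Theorem 3.9 in \cite[Chapter I]{L:interacting-particle}.
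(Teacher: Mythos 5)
Your proposal is correct and follows essentially the same route as the paper: a Harris graphical construction, the choice of a short time window so that the set of updated edges is dominated by subcritical Bernoulli percolation (hence a.s.\ finite clusters), iteration over successive windows, and a locality argument for the Feller property. The only cosmetic difference is that you verify continuity of $\zeta\mapsto\E_\zeta[f(\eta_t)]$ on cylinder functions via dominated convergence and density, whereas the paper works directly with an arbitrary continuous $f$ through a two-radius coupling estimate; both are standard and equivalent here.
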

\begin{proof} We start by defining the exclusion process as a Markov process $(\eta_t)_{t \geq 0}$ with state space $\{ 0,1 \}^V$. Recall the graphical representation of the exclusion process. To every (directed) edge $(x,y)$, we assign exponential clocks with parameter $p(x,y)$. Whenever a clock rings and the site $x$ is occupied, we move the particle from $x$ to $y$, provided that $y$ is empty; otherwise nothing happens. Set $\tau=\frac{p_G}{3C}$ and note that $P(X \leq \tau)< p_{G}$ holds for $X$ being exp$(2C)$-distributed. Observe that the value of $\eta_t(x)$ for $x \in V$ and $t \in [0,\tau]$ does only depend on the sites which are in the percolation cluster containing $x$ spanned by the edges on which (at least) one clock did ring until time $\tau$. By our choice of $\tau$, each such cluster is almost surely finite. Hence, the number of transitions in the evolution of $(\eta_t(x))_{t \in [0,\tau]}$ is almost surely finite for all $x\in V$, and therefore, the evolution of $(\eta_t(x))_{t \in [0,\tau]}$ is well-defined. For $t\geq \tau$, observe that the graphical representation is Markovian and iterate the above argument to conclude. \\

In order to show that $(\eta_t)_{t \geq 0}$ is a Feller process, it remains to verify the Feller property, i.e.\@ to show that for any continuous function $f \colon \{ 0,1 \}^V \rightarrow \mathbb{R}$, we have that the mapping  
\begin{equation}\label{eq:FellerProperty}
\zeta \mapsto \E_{\zeta}[f(\eta_t)]
\end{equation}
is continuous in the starting configuration $\zeta$ for all $t\geq 0$. Using the Markov property of $(\eta_t)_{t \geq 0}$, it suffices to check this for $t \in [0,\tau]$. Moreover, since the state space $\{ 0,1 \}^V$ is equipped with the product topology and hence compact, we can assume that $f$ is a bounded function and denote its $\ell_{\infty}$-norm by $\lVert f \rVert_{\infty}$. Let $\varepsilon>0$ be fixed. For a given bounded continuous function $f$, we can choose $r=r(G,\varepsilon)$ such that \begin{equation}\label{eq:ContinuityFeller}
\sup\left\{\abs{f(\eta)-f(\zeta)} \colon \eta|_{B_r(G,o)}=\zeta|_{B_r(G,o)}\right\} < \frac{\varepsilon}{2}
\end{equation} holds where $B_r(G,o)$ denotes the ball of radius $r$ around the root $o$ of $G$ and $\eta|_{B_r(G,o)}$ is the configuration $\eta$ restricted to $B_r(G,o)$. We claim that there exists a constant $r^{\prime}=r^{\prime}(r,\tau)$ such that
\begin{equation}\label{eq:PercolationAnnulusFeller}
\P\left( \eta_t|_{B_r(G,o)}\neq\zeta_t|_{B_r(G,o)} \text{ for some } t \in [0,\tau] \ \Big| \ \eta_0|_{B_{r^{\prime}}(G,o)}=\zeta_0|_{B_{r^{\prime}}(G,o)}\right) < \frac{\varepsilon}{2 \lVert f \rVert_\infty}
\end{equation} where $(\eta_t)_{t\geq 0}$ and $(\zeta_{t})_{t \geq 0}$ perform exclusion processes according to the graphical representation using the same clocks. Note that the event in \eqref{eq:PercolationAnnulusFeller} can only occur if there exists a path of edges which are updated until time $\tau$ connecting the boundaries of the balls $B_r(G,o)$ and $B_{r^{\prime}}(G,o)$. Since we are in the subcritical regime of percolation on $G$, the probability of this event goes to $0$ for $r^{\prime}$ tending to infinity. Thus, for $\eta,\zeta \in \{ 0,1 \}^V$ with $\eta|_{B_{r^{\prime}}(G,o)}=\zeta|_{B_{r^{\prime}}(G,o)}$, we conclude that
\begin{equation*}
\abs{\E_{\eta}[f(\eta_t)] - \E_{\zeta}[f(\eta_t)]} < \varepsilon 
\end{equation*}
holds for all $t \in [0,\tau]$ by combining \eqref{eq:ContinuityFeller} and \eqref{eq:PercolationAnnulusFeller}. Hence, the exclusion process on $G$ is a Feller process. The fact that the generator of the exclusion process has indeed the form stated in \eqref{def:generatorExclusionProcess} follows by a comparison with the above graphical representation.
\end{proof}

\begin{proof}[Proof of Proposition \ref{pro:existence}]
Observe that bond percolation with parameter $p$ on a Galton--Watson tree with offspring mean $\mathfrak{m}$ gives a Galton--Watson tree of mean $\mathfrak{m}p$. Hence, $p_G=\frac{1}{\mathfrak{m}} \in (0,1)$ holds for almost every supercritical Galton--Watson tree without leaves, see \cite[Proposition 5.9]{LP:ProbOnTrees}. We conclude Proposition \ref{pro:existence} from  Proposition \ref{pro:PercolationExistence}.
\end{proof}

\begin{remark} The proof of Proposition \ref{pro:PercolationExistence} shows that interacting particle systems with only nearest neighbor interactions and uniformly bounded transition rates on locally finite graphs with strictly positive critical value for bond-percolation are well-defined Feller processes.
\end{remark}

\bibliographystyle{plain}
\bibliography{SpeedAGWT}

\textbf{Acknowledgment.} We thank B\'alint T\'oth for an inspiring discussion at an early stage of this work at the 48.\@ Saint-Flour probability summer school and Alejandro Ram\'irez for helpful suggestions. Moreover, we are grateful to Noam Berger for valuable comments and pointing out the argument of Proposition \ref{pro:existence}. We thank two anonymous referees for a careful reading which helped to improve the paper significantly. The second author acknowledges the Studienstiftung des deutschen Volkes for financial support.


\end{document}